\documentclass[a4paper,12pt]{article}

\usepackage[portrait, top=3cm, bottom=3cm, left=2.5cm, right=2.5cm]{geometry}

\usepackage{verbatim}
\usepackage{todonotes}

\usepackage[OT2, T1]{fontenc} 
\usepackage{amsmath,amsfonts,amssymb,amsxtra,mathrsfs,mathtools,tikz-cd}
\usepackage[new]{old-arrows} 

\DeclareRobustCommand\longtwoheadrightarrow{\relbar\joinrel\twoheadrightarrow}

\usepackage{graphicx, tensor}
\usepackage[all]{xy}

\usepackage{cite}
\usepackage[pagebackref=true]{hyperref}
\hypersetup{hidelinks}

\usepackage{enumerate}

\numberwithin{equation}{section}

\usepackage[thmmarks, amsmath, amsthm]{ntheorem}

\theoremstyle{plain}
\newtheorem{thmalpha}{Theorem}

\newtheorem{thm}{Theorem}[section]
\newtheorem{lemm}[thm]{Lemma}
\newtheorem{prop}[thm]{Proposition}
\newtheorem{coro}[thm]{Corollary}
\newtheorem*{cor*}{Corollary}

\theoremstyle{definition}
\newtheorem{defn}[thm]{Definition}
\newtheorem{remk}[thm]{Remark}
\newtheorem{exam}[thm]{Example}

\newcommand{\Br}{\operatorname{Br}}
\renewcommand{\div}{\operatorname{div}}
\newcommand{\Div}{\operatorname{Div}}
\newcommand{\Ext}{\operatorname{Ext}}

\newcommand{\Hom}{\operatorname{Hom}}
\newcommand{\id}{\operatorname{id}}
\newcommand{\Img}{\operatorname{Im}}
\newcommand{\KD}{\operatorname{KD}}
\newcommand{\Ker}{\operatorname{Ker}}
\newcommand{\mult}{\operatorname{mult}}
\newcommand{\ob}{\operatorname{ob}}
\newcommand{\Pic}{\operatorname{Pic}}

\newcommand{\qt}{\operatorname{qt}}
\newcommand{\Rep}{\operatorname{Rep}}
\newcommand{\Res}{\operatorname{Res}}
\newcommand{\RHom}{\mathbf{R}\!\operatorname{Hom}}
\newcommand{\Spec}{\operatorname{Spec}}
\newcommand{\tor}{\operatorname{tor}}
\newcommand{\tors}{\operatorname{tors}}
\newcommand{\UPic}{\operatorname{UPic}}
\newcommand{\XType}{\mathbf{XType}}

\DeclareSymbolFont{cyrletters}{OT2}{wncyr}{m}{n}
\DeclareMathSymbol{\Be}{\mathalpha}{cyrletters}{"42}
\DeclareMathSymbol{\Sha}{\mathalpha}{cyrletters}{"58}

\newcommand{\Abf}{\mathbf{A}}

\newcommand{\Gbf}{\mathbf{G}}
\newcommand{\Qbf}{\mathbf{Q}}
\newcommand{\Zbf}{\mathbf{Z}}

\newcommand{\Acal}{\mathcal{A}} 

\newcommand{\Dcal}{\mathcal{D}} 
\newcommand{\Lcal}{\mathcal{L}}

\renewcommand{\H}{\mathrm{H}} 
\newcommand{\m}{\mathrm{m}} 

\newcommand{\lau}[1]{(\!(#1)\!)}

\title{Iterated toric descent obstruction and homogeneous spaces over geometric global fields}
\author{ Felipe Gambardella \\ \vspace*{-1ex} \small \textit{Université Paris-Saclay} \and Nguy\~{\^e}n M\d{a}nh Linh \\ \vspace*{-1ex} \small \textit{Université d'Artois}}
\date{}

\begin{document}

\maketitle

\begin{abstract}
    \noindent We study torsors under tori and groups of multiplicative type together with the obstruction they define to the existence of rational points in the setting of geometric global fields. Our methods give two new arithmetic applications: the iterated toric descent obstruction coincides with the toric decent obstruction and the toric obstruction is the only obstruction to the existence of rational points for homogeneous spaces of connected linear groups with either connected or abelian geometric stabilizers. These applications generalize some known results for classical global fields, due to Borovoi, Colliot-Thélène, and Sansuc, while exhibiting novel behaviors.
\end{abstract}

\tableofcontents

\section{Introduction}


Let $k$ be a number field and $X$ a variety over $k$. Understanding the set $X(k)$ of rational points is among the oldest objectives in number theory. Already determining whether the set $X(k)$ is empty is a daunting challenge for a general variety. A particularly fruitful approach is the local--global principle and the Brauer--Manin obstructions to the local--global principle introduced by Manin. Explicitly, a variety $X$ satisfies the local--global principle if the fact that $X$ admits points over completions at all places of $k$ implies that $X(k)$ is nonempty. The Brauer--Manin obstruction is incarnated by a cohomologically defined subset $X(\Abf_k)^{\operatorname{Br}}$ of the adelic points that contains the diagonal image of $X(k)$. We say that $X$ satisfies (BMLG) if either $X(\Abf_k)^{\operatorname{Br}}$ is empty or $X(k)$ is nonempty. \par

One case of particular interest where the Brauer--Manin obstruction sheds light on the behaviour of rational points is the case of homogeneous spaces. In \cite{Sansuc1981Arithmetique}, Sansuc proved that principal homogeneous spaces under linear connected $k$-groups satisfy (BMLG). Building on the work of Sansuc, Borovoi proved in \cite{Borovoi1996BMHomogeneouSpaces} that homogeneous spaces of linear connected $K$-groups with either connected or commutative geometric stabilisers satisfy~(BMLG). \par

Although the Brauer--Manin obstruction is a powerful tool to understand the local--global principle and the set of rational points, there are varieties that do not satisfy (BMLG) --- e.g. \cite{Sko19999BeyondBM} and \cite{Poonen2010Insufficiency}. This led to consideration of finer obstructions --- e.g. the descent obstruction in \cite{CTS1987Descente} or the étale-Brauer--Manin obstruction in \cite{Sko19999BeyondBM}. It turns out that these obstructions coincide. No obstruction is known to explain the failure of the local--global principle for all algebraic varieties \cite{Poonen2010Insufficiency}. A particularity of the descent obstruction is that it can be {\em iterated} as described below. A result that motivated part of this article is \cite[Theorem~1.2]{Cao2020IteratedDescent} which states that the iterated descent obstruction coincides with the descent obstruction. \par

In recent years, a local--global approach to rational points over other two-dimensional fields has been gaining interest --- e.g. \cite{CTGP2004ArithmeticLinear}, \cite{CTH2015DualCt}, \cite{Izquierdo2019Dualite}, \cite{ILA2021LocalGlobal}. In the present text, we are interested by \textit{geometric global fields} --- i.e. a finite extension of either $\mathbf{C}(\!(t)\!)(x)$ or $\mathbf{C}(\!(x,y)\!)$. In Section~\ref{subsection generalities fields} we explain that there is a natural set of places coming from the geometry of the fields and an obstruction also denominated Brauer--Manin obstruction. This setting was originally introduced in \cite{CTH2015DualCt} and \cite{Izquierdo2019Dualite}. It was proven \textit{loc. cit.} that the results of Sansuc generalize to this setting --- i.e. principal homogeneous spaces of connected linear groups over geometric global fields satisfy (BMLG). However, we can find in \cite[Section~4]{ILA2021LocalGlobal} an example of a homogeneous space of $\operatorname{SL}_n$ with toric stabiliser that does not satisfy (BMLG) showing that the results of Borovoi do not generalize to this setting. In \cite{ILA2021LocalGlobal}, Izquierdo and Lucchini Arteche introduced a refined obstruction $X(\mathbf{A}_K)^{\qt, \Be}$ and proved that this obstruction together with an unnatural condition on a larger set of valuations than the one described below is the only obstruction to the local-global principle. We remove this unnatural assumption, obtaining  the following. 

\begin{thmalpha}\label{thmalpha homogeneous spaces}
    Let $K$ be a geometric global field and $X$ be a homogeneous space of a connected linear algebraic group $G$ with geometric stabiliser $\bar{H}$. Assume that $G^{\mathrm{ss}}$ is simply connected and $\bar{H}^{\mathrm{torf}}$ is commutative (see Section~\ref{section Homogeneous spaces} for the notation). Then, $X(K)$ in nonempty whenever $X(\Abf_K)^{\qt, \Be} \neq \varnothing$.
\end{thmalpha}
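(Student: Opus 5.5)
We would follow Borovoi's strategy of reducing to torsors, adapted to the geometric global field setting via the toric descent machinery of the paper. The guiding idea is that $X(\Abf_K)^{\qt,\Be}$ should be controlled by torsors under groups of multiplicative type over $X$. The main inputs are then two facts: the toric descent obstruction coincides with its iterated version, and principal homogeneous spaces of connected linear groups satisfy (BMLG) over $K$ (\cite{CTH2015DualCt}, \cite{Izquierdo2019Dualite}).

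\emph{Step 1 (reductions).} Write $\bar H^{\mathrm{torf}} = \bar H/\bar H^{\mathrm{tu}}$, where $\bar H^{\mathrm{tu}}$ is the extension of the unipotent radical by the "toric-unipotent" part. Our reading of the hypothesis is that $\bar H^{\mathrm{torf}}$ is of multiplicative type: a torus if $\bar H$ is connected, finite abelian in the abelian case. Replacing $G$ by $G\times T$ for a quasi-trivial torus $T$, or by $G\times \mathrm{SL}_n$, does not change $X$'s obstruction set. This allows us to assume $G^{\mathrm{ss}}$ simply connected and $G$ with trivial Picard-type invariants, following \cite{Borovoi1996BMHomogeneouSpaces}.

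\emph{Step 2 (passage to a homogeneous space with multiplicative-type stabiliser).} Using the quotient $\bar H\to \bar H^{\mathrm{torf}}=:\bar M$, we want to show that $X$ is dominated by a homogeneous space $Y$ with stabiliser $\bar M$ of multiplicative type, with a $K$-form $M$ of $\bar M$ given by the band. Concretely, we embed $G\hookrightarrow G'=\mathrm{SL}_n$ or a quasi-trivial extension, and realise $X$ as $G'/H'$ up to torsors. The fibres of $Y\to X$ are then homogeneous spaces of unipotent or semisimple simply connected groups, which satisfy the Hasse principle over $K$. This step uses Hilbert 90 for unipotents and the vanishing of $\H^1$ of simply connected groups over the completions, which have cohomological dimension $\le 1$ type.

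\emph{Step 3 (torsor argument).} A homogeneous space $Y=G/M$ with $M$ of multiplicative type gives an $M$-torsor $G\to Y$, up to twisting. Given $(x_v)\in X(\Abf_K)^{\qt,\Be}$, the definition of the $\qt,\Be$ obstruction should give:
\begin{itemize}
\item a twist of this torsor, or of a torsor under the quasi-trivial toric resolution of $M$, whose fibre over $(x_v)$ lifts adelically;
\item a twisted form $G^\sigma$ that is a principal homogeneous space under a connected linear group with adelic points orthogonal to the relevant Brauer group.
\end{itemize}
The Sansuc-type theorem over $K$ then gives a rational point on the torsor, hence on $X$. The iterated toric descent result is what lets us absorb the two successive descents (first to $Y$, then to $G$) into a single obstruction.

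\emph{Main obstacle.} The delicate step is Step 3, in particular matching the ad hoc set $X(\Abf_K)^{\qt,\Be}$ with the toric descent set. Izquierdo--Lucchini Arteche needed an extra condition at extra valuations precisely here, since $\Be$ only controls places of the natural set. We would first try to prove an equality
\[
X(\Abf_K)^{\qt,\Be}=X(\Abf_K)^{\mathrm{tor}},
\]
using Poitou--Tate style duality for tori over $K$ and the finiteness of $\Sha$. We would then deduce the theorem from the torsor argument of Step 3. If the full equality fails, we would aim for the weaker inclusion into the set cut out by quasi-trivial torsors, which is all that Step 3 uses.
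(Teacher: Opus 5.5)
Your plan has a genuine gap, and it sits in the steps you treat as routine. In Step~3 you assert that a homogeneous space $Y$ with multiplicative-type stabiliser ``gives an $M$-torsor $G\to Y$, up to twisting''. In Step~2 you realise $X$ ``up to torsors'' by such a $Y$ defined over $K$. Both presuppose that the relevant Springer class ($\eta_Y$, resp.\ $\eta_X$ or a pushforward of $\eta_X^{\mathrm{torf}}$) is neutral, i.e.\ that the space is dominated by a principal homogeneous space, or by a $K$-torsor that is again a homogeneous space. Over a geometric global field this is exactly what can fail, and it is the real content of the theorem. The homogeneous space of $\operatorname{SL}_n$ with toric stabiliser of Izquierdo--Lucchini Arteche that violates (BMLG) cannot have neutral Springer class, since $\H^1(K,\operatorname{SL}_n)=0$ would then give a rational point. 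The same example shows that your Step~2 claim, that homogeneous spaces of simply connected groups satisfy the Hasse principle over $K$, is false as stated.

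Nothing in your plan explains how $X(\Abf_K)^{\qt,\Be}\neq\varnothing$ kills this $\H^2$-class. The definition of $X(\Abf_K)^{\qt,\Be}$ only involves torsors under quasitrivial tori that already exist over $X$. Comparing it with $X(\Abf_K)^{\tor}$ does not help either. The paper only has $X(\Abf_K)^{\tor}=X(\Abf_K)^{\qt,\Br_1}\subseteq X(\Abf_K)^{\qt,\Be}$ (Theorem~\ref{thm:TorEqualTorTor}), and the reverse inclusion is not available. In any case, descent sets control lifts along given torsors, not the \emph{existence} of torsors of a prescribed type, so the iterated toric descent plays no role here. This existence question, rather than a comparison of obstruction sets, is the step for which Izquierdo--Lucchini Arteche needed the extra places $\Omega_{X,0}$.

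The missing idea is the quasitrivial realization of Proposition~\ref{prop:QuasitrivialRealization}.
\begin{itemize}
\item Given an extended type $\lambda\colon\widehat{M}\to\UPic(\bar X)$, embed $M\hookrightarrow Q$ with $Q$ quasitrivial and $\iota_*\ob(\lambda)=0$. Then $\ob(\lambda)=\delta([E])$ for a $K$-torsor $E$ under the torus $Q/M$, and there is a $Q$-torsor $p\colon Y\to X$ with an equivariant map $f\colon Y\to E$.
\item Classes in $\Be(E)$ pair constantly with $E(\Abf_K)$, so a point of $X(\Abf_K)^{\qt,\Be}$ gives $E(\Abf_K)^{\Be(E)}\neq\varnothing$. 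Hence $E(K)\neq\varnothing$ by the $\Be$-local--global principle for torsors under tori, so $\ob(\lambda)=0$ (Corollary~\ref{coro:QtBeNonempty}).
\item Via the comparison of descent types with extended types (Proposition~\ref{prop descent types vs extened types}), the pushforward of $\eta_X^{\mathrm{torf}}$ along an embedding $H^{\mathrm{torf}}\hookrightarrow Q$ vanishes in $\H^2(K,Q)$.
\item The gerbe argument of Demarche--Lucchini Arteche then yields a $Q$-torsor $Y\to X$ which is a homogeneous space of $G\times Q$ with $\bar H^{\mathrm{torf}}\to(\bar G\times\bar Q)^{\tor}$ injective (Proposition~\ref{prop existence of universal torsor that is a homogeneous space}).
\item For such $Y$ the $\Be$-obstruction is the only one (Proposition~\ref{prop ILA}), and $Y(\Abf_K)^{\Be}\neq\varnothing$ follows directly from the definition of $X(\Abf_K)^{\qt,\Be}$.
\end{itemize}
No reduction to multiplicative-type stabilisers and no Sansuc-type theorem for $G$-torsors is needed.
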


\noindent In particular, by a known lemma of Borovoi, we obtain the following corollary.
\begin{cor*}
    Let $K$ be a geometric global field and $X$ be a homogeneous space of a connected linear algebraic group $G$ with connected geometric stabiliser $\bar{H}$. Then, $X(K)$ in nonempty whenever $X(\Abf_K)^{\qt, \Be} \neq \varnothing$.
\end{cor*}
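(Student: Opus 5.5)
The corollary should follow from Theorem~\ref{thmalpha homogeneous spaces} once we reduce to the case where $G^{\mathrm{ss}}$ is simply connected, using the standard lemma of Borovoi. When $\bar{H}$ is connected, its toric-free quotient $\bar{H}^{\mathrm{torf}}$ (the quotient of $\bar H$ by the subgroup generated by its unipotent radical and its semisimple part, in the sense of Section~\ref{section Homogeneous spaces}) is a connected group of multiplicative type, hence a torus, hence commutative. So the only hypothesis of Theorem~\ref{thmalpha homogeneous spaces} that might fail is the simple connectedness of $G^{\mathrm{ss}}$.

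To arrange it, I would apply Borovoi's lemma. Given a homogeneous space $X$ of a connected linear $K$-group $G$, there is a connected linear $K$-group $G'$ with $G'^{\mathrm{ss}}$ simply connected and a surjection $G'\to G$ with central kernel of multiplicative type. One can take $G' = G^{\mathrm{sc}}\times_{\ldots} \ldots$, or more simply $G'=\tilde G\times T$ with $\tilde G$ a $z$-extension of $G$, which has central toric kernel $Z$. Then $X$ is a homogeneous space of $G'$ via $G'\to G$. Its geometric stabiliser is $\bar H'$, the preimage of $\bar H$, which sits in an exact sequence $1\to \bar Z\to \bar H'\to \bar H\to 1$. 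Since $\bar Z$ is a torus (connected) and $\bar H$ is connected, $\bar H'$ is connected.

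Applying the first paragraph to $\bar H'$ shows that $\bar H'^{\mathrm{torf}}$ is a torus. This works because a connected linear group over an algebraically closed field of characteristic zero has $\bar H'/(R_u\cdot [\bar H'_{\mathrm{red}},\bar H'_{\mathrm{red}}])$ a torus. The variety $X$ and the set $X(\Abf_K)^{\qt,\Be}$ depend only on $X$ as a variety, not on the acting group. Hence Theorem~\ref{thmalpha homogeneous spaces} applied to $(G',X)$ gives $X(K)\neq\varnothing$ whenever $X(\Abf_K)^{\qt,\Be}\neq\varnothing$.

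The main point to check is that the obstruction $X(\Abf_K)^{\qt,\Be}$ is intrinsic to $X$. If instead it is defined using the group structure, for instance through a quotient by $G$ or through $\bar H$, one must verify that replacing $G$ by $G'$ does not enlarge it. I expect it is defined via torsors under quasi-trivial tori or groups of multiplicative type over $X$, and is therefore intrinsic, so this step should be a formality. A second check is that $z$-extensions exist over geometric global fields. They do, since they exist over any field of characteristic zero.
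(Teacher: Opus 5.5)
Your proof is correct and follows the route the paper indicates for this corollary ("by a known lemma of Borovoi"): a $z$-extension $G'\to G$ with central toric kernel makes $G'^{\mathrm{ss}}$ simply connected while keeping $\bar H'$ connected, so $\bar H'^{\mathrm{torf}}$ is a torus, and Theorem~\ref{thmalpha homogeneous spaces} applies because $X(\Abf_K)^{\qt,\Be}$ is defined through $X$-torsors under quasitrivial tori and so does not depend on the acting group. The paper also gives an independent argument for the same statement in Section~\ref{section Homogeneous spaces}, combining Corollary~\ref{coro:QtBeNonempty} with the elementary-obstruction theorem of Borovoi, Colliot-Th\'el\`ene and Skorobogatov.
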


\noindent The need to consider obstructions finer than Brauer--Manin to control the local--global principle over homogeneous spaces invites us to study the behaviour of further obstructions over geometric global fields. As explained in \cite[Section~6]{ILA2021LocalGlobal}, the obstruction $X(\Abf_K)^{\qt, \Be}$ compares with a more classical obstruction denominated the \textit{toric descent obstruction} defined as
\[
    X(\Abf_K)^{\tor} \coloneq \bigcap_{f \colon Y \xrightarrow{T} X} \bigcup_{[\sigma] \in \H^1(K,T)} f^{\sigma}(Y^{\sigma}(\Abf_K)),
\]
where the intersection runs over every torsor under an algebraic $K$-torus and $f^{\sigma} \colon Y^{\sigma}\to X$ denotes the Galois twist of $f$. A similar obstruction $X(\Abf_K)^{\mult}$ can be defined with respect to $K$-groups of multiplicative type. The second main result of this article is a description of the toric descent obstruction in terms of $X(\Abf_K)^{\qt, \Br_1}$ and the fact that the iterated descent obstruction coincides with toric descent obstruction (see Section~\ref{subsection generalities obstruction sets} for the definition).

\begin{thmalpha}\label{thmalpha tor,tor equals tor}
    Let $X$ be a smooth geometrically integral variety such that the group $\Pic(\bar{X})$ is finitely generated. Then, we have
    \[
        X(\Abf_K)^{\tor,\tor} = X(\Abf_K)^{\tor} = X(\Abf_K)^{\mult} = X(\Abf_K)^{\qt, \Br_1}.
    \]
\end{thmalpha}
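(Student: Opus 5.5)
The plan is to prove a chain of inclusions, take the equality $X(\Abf_K)^{\mult} = X(\Abf_K)^{\qt,\Br_1}$ as the central comparison, and then handle the iterated obstruction separately. The formal inclusions come first. Every torus is of multiplicative type, so $X(\Abf_K)^{\mult}\subseteq X(\Abf_K)^{\tor}$. Iterating can only shrink the set, so $X(\Abf_K)^{\tor,\tor}\subseteq X(\Abf_K)^{\tor}$. Next, $X(\Abf_K)^{\tor}\subseteq X(\Abf_K)^{\qt,\Br_1}$. Here $\Br_1$ is described via the type map $\H^1(X,M)\to \Hom_{\Gamma}(\hat M,\Pic\bar X)$ (or via $\UPic$), together with compatibility of the local pairings with twisting. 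Using the finite generation of $\Pic(\bar X)$, we can choose a universal-type torsor under a group of multiplicative type $M$ with $\hat M$ surjecting onto a finitely generated lattice mapping to $\Pic\bar X$. Since $\Pic(\bar X)$ may have torsion, we resolve $M$ by a torus,
\[
1\to M\to T\to T'\to 1 .
\]
This lets us realize every multiplicative-type torsor through a torus torsor followed by a map to $T'$, which should also give $X(\Abf_K)^{\tor}\subseteq X(\Abf_K)^{\mult}$.

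The key step is the reverse inclusion $X(\Abf_K)^{\qt,\Br_1}\subseteq X(\Abf_K)^{\mult}$. Take an adelic point $(P_v)$ orthogonal to the relevant algebraic Brauer classes and a torsor $f\colon Y\to X$ under $M$. The evaluation class $([Y](P_v))_v$ lies in $\prod_v \H^1(K_v,M)$. We need it to come from a global class $\sigma\in\H^1(K,M)$, with twist compatibility $[Y^\sigma](P_v)=0$. For this I would invoke the Poitou--Tate-type exact sequence for groups of multiplicative type over geometric global fields (Izquierdo, Colliot-Thélène--Harari). It says that a family in $\prod_v\H^1(K_v,M)$ lies in the image of the global $\H^1$, modulo the relevant part, if and only if it is orthogonal to $\Sha^2$ or $\H^2(K,\hat M)$ via cup products.

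The main obstacle is exactly this step. In this setting the local-global duality features $\H^2$ of the dual, not $\H^1$, and the restricted product is over the geometric places. This is why the obstruction is $\qt$ rather than plain $\Br_1$: cup products $\hat M\cup[Y]$ land in $\H^3$-type groups, the ``quasi-trivial''/degree-3 unramified classes. I would try first to show that the pairing of $(P_v)$ with $\alpha\cup[Y]$, for $\alpha\in\H^2(K,\hat M)$, is precisely the $\qt,\Br_1$ pairing, via functoriality of $\UPic$ and the Yoneda description of $\Br_1$ as $\H^1(K,\UPic)$ composed with $\H^2$. Once that holds, the duality sequence produces $\sigma$. Then $(P_v)$ lifts locally to $Y^\sigma$, and we conclude $(P_v)\in f^\sigma(Y^\sigma(\Abf_K))$.

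Finally, for $X(\Abf_K)^{\tor}\subseteq X(\Abf_K)^{\tor,\tor}$, take $(P_v)\in X(\Abf_K)^{\tor}$ and a torsor $f\colon Y\to X$ under a torus $T$. We get $\sigma$ and $(Q_v)\in Y^\sigma(\Abf_K)$ over $(P_v)$. We must choose $(Q_v)$ inside $Y^\sigma(\Abf_K)^{\tor}$, equivalently inside $Y^\sigma(\Abf_K)^{\qt,\Br_1}$ by the equality just proved; note that $\Pic(\bar Y)$ is still finitely generated, by the exact sequence $\hat T\to\Pic\bar X\to\Pic\bar Y\to 0$ (Sansuc). Here I would follow Cao's strategy. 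Use the sequence $0\to\hat T\to\UPic(\bar X)\to\UPic(\bar Y)\to 0$ (up to a shift), and correct $(Q_v)$ by an element of $T(\Abf_K)$, changing $\sigma$ if necessary. The aim is to kill the pairing with the finitely many relevant classes of $Y^\sigma$. The obstruction to doing so is a class in a dual of $\Sha$ of $T$, and it vanishes because $(P_v)$ pairs trivially with the classes on $X$. Again, the delicate point is the finiteness/duality bookkeeping for $\H^2(K,\hat T)$ versus $\Sha^1(T)$ over geometric global fields.
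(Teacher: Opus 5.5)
Your proposal has genuine gaps. The most serious one is in the step you call key.

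\textbf{The step $X(\Abf_K)^{\qt,\Br_1}\subseteq X(\Abf_K)^{\mult}$.} First, $\qt$ has nothing to do with degree-three classes. By definition, $x_\Abf\in X(\Abf_K)^{\qt,\Br_1}$ means that for \emph{every} torsor $Y\to X$ under a quasitrivial torus, $x_\Abf$ lifts to a point of $Y(\Abf_K)^{\Br_1}$. Second, the pairing you propose is degenerate. For $\alpha\in\H^2(K,\widehat M)$ the local values $\alpha_v\cup[Z](P_v)$ lie in $\H^3(K_v,\Gbf_m)=0$, because $K_v$ has cohomological dimension $2$.

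Suppose you repair this with the correct local duality $\H^1(K_v,M)\times\H^1(K_v,\widehat M)\to\Br(K_v)$. Then your argument only uses the classes $\beta\cup[Z]\in\Br_1(X)$ with $\beta\in\H^1(K,\widehat M)$, so it would prove $X(\Abf_K)^{\Br_1}\subseteq X(\Abf_K)^{\mult}$. That inclusion is false over geometric global fields. For the homogeneous space of $\mathrm{SL}_n$ with toric stabiliser in \cite[Section~4]{ILA2021LocalGlobal}, one has $X(\Abf_K)^{\Br}\neq\varnothing$ and $X(K)=\varnothing$. Hence $X(\Abf_K)^{\qt,\Be}=\varnothing$ by the corollary of Theorem~\ref{thmalpha homogeneous spaces}. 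Then $X(\Abf_K)^{\mult}=\varnothing$ by the very statement you are proving.

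What breaks is the classical degree-one Poitou--Tate description of $\prod'_v\H^1(K_v,M)$ modulo $\H^1(K,M)$. Choose $1\to M\to Q\to T\to 1$ with $Q$ quasitrivial, and set $Y=Z\times^M Q$ with $h\colon Y\to T$. The condition that actually governs lifting is orthogonality of lifts to $\varphi^{-1}(\Sha^2(K,\widehat Q))\subseteq\Br_1(Y)$. Since $\Sha^2(K,\widehat Q)$ need not vanish here, these classes live on the quasitrivial torsor and do not come from $X$; this is the reason for $\qt$. The paper therefore works on $Y$ (Proposition~\ref{prop:TorEqualMult}). It uses the degree-zero sequence of Proposition~\ref{prop properties adelic spaces} for $T$ to approximate $h$ of a suitable lift by some $t\in T(K)$ modulo $N$-th powers. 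Here $N$ kills $\H^1(K,M)$ and all the $\H^1(K_v,M)$. It then twists $Z$ by $-\delta(t)$.

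\textbf{The other inclusions.} These are asserted rather than proved.
\begin{enumerate}
\item For $X(\Abf_K)^{\tor}\subseteq X(\Abf_K)^{\mult}$, nothing in your sketch shows how a point of $X(\Abf_K)^{\tor}$ produces the required global class. The paper needs \cite[Theorem~6.4]{ILA2021LocalGlobal} on quasitrivial torsors together with the argument above.
\item The inclusion $X(\Abf_K)^{\tor}\subseteq X(\Abf_K)^{\qt,\Br_1}$ is not formal. Each torus torsor over a quasitrivial $Y\to X$ yields \emph{some} lift of $x_\Abf$, but you need one lift that works for all of them at once. That is the same problem as $\tor\subseteq\tor,\tor$. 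In the paper it is obtained only at the end, from $X(\Abf_K)^{\tor}\subseteq X(\Abf_K)^{\tor,\tor}$ and the easy inclusion $X(\Abf_K)^{\tor,\tor}\subseteq X(\Abf_K)^{\qt,\Br_1}$.
\item Your ``universal-type torsor'' need not exist a priori. Its existence requires $X(\Abf_K)^{\qt,\Be}\neq\varnothing$ and quasitrivial realizations.
\item For $X(\Abf_K)^{\tor}\subseteq X(\Abf_K)^{\tor,\tor}$, your Cao-style correction has no working mechanism. You give no reason why finitely many classes should suffice, since the condition on $Y^\sigma$ involves the algebraic Brauer groups of all quasitrivial torsors over it simultaneously. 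Also, orthogonality to classes coming from $X$ is exactly what is insufficient here.
\end{enumerate}

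\textbf{Missing ideas.} The paper's proof of the hard inclusion rests on four ingredients absent from your plan.
\begin{enumerate}
\item The arithmetic of $\Acal(T)=T(\Abf_K)/T(K)$. This means $T(\Abf_K)^{\tor}/T(K)=\Acal(T)_{\div}$ (Proposition~\ref{prop:TorEqualTorTorForTori}, resting on $\Acal(T)_{\div}=\Acal(T)_{\infty-\div}$). It also uses the finite-exponent statements of Proposition~\ref{prop properties adelic spaces} and Lemma~\ref{lemm:TorEqualTorTor:KerRhoFiniteExponent}.
\item The case $\Pic(\bar X)=0$, handled via units torsors.
\item The reduction to that case through a quasitrivial torsor of surjective type (Lemma~\ref{lemm:TorEqualTorTor:ExtentedTypeCoveringPicX}).
\item The auxiliary torsor under $\Ker(\Phi)$, to which $\tor=\mult$ is applied. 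This produces the correction $q_\Abf$ with $u(q_\Abf\cdot y_\Abf)\in S(\Abf_K)^{\tor}$.
\end{enumerate}
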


\section{Preliminaries}


\subsection{Generalities on geometric global fields}\label{subsection generalities fields}

\begin{defn}
    A field $K$ is said to be a {\em geometric global field} if $K$ belongs to one of the following types:
    \begin{itemize}
        \item[(a)] a function field of a smooth projective curve $C$ geometrically integral over $\mathbf{C}\lau{t}$,~or
        \item[(b)] a finite extension of the field $\mathbf{C}\lau{x,y}$.
    \end{itemize}
\end{defn}

\noindent In the sequel, we consider local--global principles for geometric global fields with respect to a natural set of places coming from a natural Dedekind scheme associated to the field. Explicitly, the set of places under consideration is denoted by $C^{(1)}$ and defined depending on the type:
\begin{itemize}
    \item[(a)] it is the set of places coming from closed points of the smooth projective geometrically integral curve $C$ over $\mathbf{C}\lau{t}$, and
    \item[(b)] let $R$ be the integral closure of $\mathbf{C}[\![x,y]\!]$ in $K$. Note that $R$ is a local ring with maximal ideal $\mathfrak{m}$. Then, $C^{(1)}$ is the set of places coming from closed points of the (non-affine) Dedekind scheme $C \coloneq \operatorname{Spec}(R) \setminus \{\mathfrak{m}\}$.
\end{itemize}

\noindent In either case, we refer to $C$ as the \textit{Dedekind scheme associated to $K$}. \par

For every $v \in C^{(1)}$ we denote by $K_v$ the completion of $K$ at $v$ and by $\mathcal{O}_v$ the ring of integers of $K_v$. Let $X$ be a $K$-variety. We denote by $\bar{X}$ the base change of $X$ to the algebraic closure and for every $v \in C^{(1)}$ we denote by $X_v$ the base change of $X$ to $K_v$. We can define the set of adelic points of a variety $X$ by analogy with the case of classical global fields: there exists an open set $U \subseteq C$ such that $X$ admits a model $\mathcal{X}$ over $U$, the set of adelic points is defined as
\[
    X(\Abf_K) \coloneq \left\{(x_v)_{v \in C^{(1)}} \in \prod_{v \in C^{(1)}} X(K_v) \mid  x_v \in \mathcal{X}(\mathcal{O}_v) \text{ for almost all }v\right\}.
\]

\noindent We often write $x_{\mathbf{A}}$ to denote an element of $X(\Abf_K) $ whose local component at $v \in C^{(1)}$ is given by $x_v$. \par

Let $\mathcal{F}$ be a class of $K$ varieties. We say that $\mathcal{F}$ satisfies the local--global principle if for all $X \in \mathcal{F}$ we have
\[
    X(\Abf_K) \neq \varnothing \Longrightarrow X(K) \neq \varnothing.
\]
Similarly to the case of number fields, there exists a pairing
\begin{equation}\label{eq Brauer--Manin}
    \mathrm{BM}\colon X(\Abf_K) \times \Br (X) \longrightarrow  \Qbf / \Zbf    
\end{equation}
such that the diagonal image of $X(K)$ in $X(\Abf)$ is contained in the left kernel $X(\Abf)^{\Br(X)}$ of $\mathrm{BM}$.~Define 
\begin{align*}
    \Br_0 (X) & \coloneq \Img(\Br (K) \longrightarrow   \Br (X)) \\
    \Br_1 (X) & \coloneq \Ker (\Br (X) \longrightarrow   \Br (\bar{X}))\\
    \Br_{\mathrm{al}} (X) & \coloneq \Br_1 (X) / \Br_0 (X).
\end{align*}
It follows from the exact sequences \cite[Proposition~2.1]{CTH2015DualCt} and \cite[Théorème~1.6]{Izquierdo2019Dualite} that the pairing \eqref{eq Brauer--Manin} factorises through $\Br (X)/\Br_0(X)$. For every subgroup $B$ of $\Br(X)$ we define $X(\Abf_K)^{B}$ to be the set of adelic points orthogonal to every $\alpha \in B$ with respect to the pairing \eqref{eq Brauer--Manin}.
\begin{defn}
    Let $X$ be a variety and $B$ a subgroup of $\Br(X)$. We say that the Brauer--Manin obstruction with respect $B$ is the only obstruction to the local--global principle for $X$ if
    \[
    X(\Abf_K)^{B} \neq \varnothing \Longrightarrow X(K)\neq \varnothing.
    \]
\end{defn}

\noindent In this manuscript we mostly consider the group $B$ to be either $\Br_1(X)$ or 
\[
    \Be(X) \coloneq \Ker \left( \Br_1(X) \longrightarrow \prod_{v\in C^{(1)}} \Br_{\mathrm{al}} (X_v)\right).
\]


\subsection{Adelic spaces of tori}

Let $\Gamma$ be a profinite group. Recall that a $\Gamma$-\textit{lattice} is a finitely generated free abelian group endowed with a continuous action of $\Gamma$. We denote by $\Lcal_{\Gamma}$ the category whose objects are $\Gamma$-lattices and morphisms are $\Gamma$-equivariant morphisms of groups. We also consider the \textit{isogeny category} $\Lcal \otimes_{\Zbf} \Qbf$ whose objects are again $\Gamma$-lattices and the morphism set between two objects $L_1$ and $L_2$ is given by $\Hom_{\Lcal}(L_1,L_2) \otimes_{\Zbf}\Qbf$.
\begin{lemm} \label{lemm:CategoryOfToriUptoIsogeny}
    For any profinite group $\Gamma$, the isogeny category of $\Gamma$-lattices is a semisimple abelian category.
\end{lemm}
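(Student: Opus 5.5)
The plan is to identify the isogeny category $\Lcal_\Gamma \otimes_{\Zbf} \Qbf$ with the category of finite-dimensional continuous $\Qbf$-representations of $\Gamma$, i.e.\ $\Qbf[\Gamma]$-modules on which $\Gamma$ acts through a finite quotient. That category is semisimple by Maschke's theorem applied to the finite quotients. Concretely, consider the functor $F \colon L \mapsto L \otimes_{\Zbf} \Qbf$. A $\Gamma$-lattice is finitely generated and the action is continuous, so it factors through a finite quotient $\Gamma/U$ with $U$ open normal. Hence $F(L)$ is a finite-dimensional $\Qbf$-representation with open kernel.

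First I will show that $F$ is fully faithful. Since $L_1$ and $L_2$ are finitely generated free, $\Hom_{\Zbf}(L_1,L_2)\otimes\Qbf = \Hom_{\Qbf}(L_{1,\Qbf},L_{2,\Qbf})$. Taking $\Gamma$-invariants commutes with $\otimes_{\Zbf}\Qbf$, because $\Qbf$ is flat and the invariants of a finite group $\Gamma/U$ are a kernel of a map to a finite product. This gives
\[\Hom_{\Lcal}(L_1,L_2)\otimes_{\Zbf}\Qbf \cong \Hom_{\Gamma}(L_{1,\Qbf},L_{2,\Qbf}).\]

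Next I will show essential surjectivity. Given a finite-dimensional representation $V$ with open kernel, choose a $\Qbf$-basis and let $L$ be the $\Zbf$-span of its $\Gamma/U$-orbit. Then $L$ is a finitely generated, torsion-free, $\Gamma$-stable subgroup spanning $V$, hence a lattice with $L\otimes\Qbf = V$.

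Finally, the target category is abelian, since kernels and cokernels of equivariant maps are again representations with open kernel. It is also semisimple: given a subrepresentation $W\subseteq V$ on which $\Gamma$ acts through the finite group $G=\Gamma/U$, average a $\Qbf$-linear projection $p$ onto $W$ to get $\frac{1}{|G|}\sum_{g\in G} g p g^{-1}$. This is an equivariant projection, which gives a complement to $W$. Every object is finite-dimensional, so it splits into finitely many simples. Transporting these properties along the equivalence $F$ shows that the isogeny category is a semisimple abelian category.

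The main point needing care is full faithfulness, namely that invariants commute with rationalization. This is where finite generation of the lattices and continuity of the action (factoring through a finite quotient) are used. Everything else is formal.
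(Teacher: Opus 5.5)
Your proposal is correct and follows the same route as the paper: the functor $L \mapsto L \otimes_{\Zbf} \Qbf$ to $\Rep_{\Qbf}(\Gamma)$, essential surjectivity via the $\Zbf$-span of the finite orbit of a basis, and Maschke's theorem on the finite quotient. You also justify two points the paper only asserts: full faithfulness, since invariants under a finite quotient commute with the flat base change $\otimes_{\Zbf}\Qbf$, and abelianness of the isogeny category, which you transport along the equivalence.
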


\begin{proof}
    Note that in contrast with $\Lcal_{\Gamma}$, the isogeny category $\Lcal \otimes_{\Zbf} \Qbf$ is an abelian category.
    There exists a fully faithful (additive) functor
    \[
        F \colon \Lcal_\Gamma \otimes_\Zbf \Qbf \longrightarrow \Rep_\Qbf(\Gamma), \qquad L \longmapsto L \otimes_\Zbf \Qbf,
    \]
    where $\Rep_\Qbf(\Gamma)$ denotes the category of finite-dimensional representations of $\Gamma$ over $\Qbf$. 
    \par
    
    If $V$ is such a representation, then the action of $\Gamma$ on $V$ factors through a finite quotient $G$. Choose a base for $V$ and let $L$ be its $\Zbf[G]$-span. Then $L$ is a $\Gamma$-lattice such that $L \otimes_\Zbf \Qbf \cong V$. Therefore, $F$ is essentially surjective, so it is an equivalence of abelian categories. Since $\Rep_\Qbf(\Gamma)$ is semisimple by Maschke's theorem, the lemma follows.
\end{proof}

\noindent Recalling that the functor that to an algebraic $K$-torus $T$ assigns its module of characters $\widehat{T}$ is a contravariant equivalence of categories, Lemma~\ref{lemm:CategoryOfToriUptoIsogeny} admits the following corollary.
\begin{coro} \label{coro:CategoryOfToriUptoIsogeny}
    Let $k$ be a field and $\pi: T \to S$ a surjective map of algebraic $k$-tori. Then, there exists a morphism $\sigma: S \to T$ and $n \in \mathbf{N}$ such that $\pi \circ \sigma = [n]_S$ where $[n]_S: S \to S$ is multiplication by $n$.
\end{coro}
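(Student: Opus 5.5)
We pass to character lattices and use Lemma~\ref{lemm:CategoryOfToriUptoIsogeny}. Let $\Gamma = \operatorname{Gal}(k^s/k)$. Under the anti-equivalence $T \mapsto \widehat{T}$, the surjection $\pi \colon T \to S$ corresponds to a $\Gamma$-equivariant map $\widehat{\pi} \colon \widehat{S} \to \widehat{T}$. This map is injective, since a surjective morphism of tori induces an injection on characters: a character of $S$ that vanishes on $\pi(T)=S$ is trivial. It therefore suffices to find a $\Gamma$-equivariant map $\widehat{\sigma} \colon \widehat{T} \to \widehat{S}$ and an integer $n \geq 1$ with $\widehat{\sigma} \circ \widehat{\pi} = n \cdot \id_{\widehat{S}}$. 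Dualizing then gives $\sigma \colon S \to T$ with $\pi \circ \sigma = [n]_S$, because the character map of $[n]_S$ is multiplication by $n$ on $\widehat{S}$ and the dual of $\widehat{\sigma}\circ\widehat{\pi}$ is $\pi\circ\sigma$.

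First, $\widehat{\pi}$ remains a monomorphism in the isogeny category $\Lcal_\Gamma \otimes_\Zbf \Qbf$. Via the equivalence $F$ from the proof of Lemma~\ref{lemm:CategoryOfToriUptoIsogeny}, it becomes the map $\widehat{S}_\Qbf \to \widehat{T}_\Qbf$, which is injective because $\Qbf$ is flat over $\Zbf$.

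Next, we use semisimplicity. In a semisimple abelian category every monomorphism splits, so there is a morphism $\rho \in \Hom_{\Lcal}(\widehat{T},\widehat{S}) \otimes_\Zbf \Qbf$ with $\rho \circ \widehat{\pi} = \id$. Concretely, Maschke's theorem gives a $\Gamma$-stable complement $W$ with $\widehat{T}_\Qbf = \widehat{\pi}(\widehat{S}_\Qbf) \oplus W$, and $\rho$ is the projection onto the first summand.

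Finally, we clear denominators. Write $\rho = \widehat{\sigma} \otimes \tfrac{1}{n}$ with $\widehat{\sigma}$ an honest $\Gamma$-equivariant homomorphism $\widehat{T} \to \widehat{S}$ and $n \in \mathbf{N}$. This is possible because every element of $A \otimes_\Zbf \Qbf$ has the form $a \otimes \tfrac1n$. Then $\widehat{\sigma}\circ\widehat{\pi} \otimes 1 = n\cdot \id$ in $\Hom(\widehat{S},\widehat{S})\otimes\Qbf$. Since $\Hom(\widehat{S},\widehat{S})$ is torsion-free, being a subgroup of $\operatorname{End}_\Zbf(\Zbf^r)$, the map $\Hom(\widehat{S},\widehat{S}) \to \Hom(\widehat{S},\widehat{S}) \otimes \Qbf$ is injective. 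Hence $\widehat{\sigma}\circ\widehat{\pi} = n\cdot\id_{\widehat{S}}$ holds on the nose.

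No step is a serious obstacle. The only care needed is in the first reduction: checking that surjectivity of $\pi$ gives injectivity of $\widehat{\pi}$, and that composition and the map $[n]$ are correctly transported through the contravariant equivalence.
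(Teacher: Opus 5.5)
Your proposal is correct and takes the same route as the paper. The paper deduces the corollary directly from the semisimplicity of the isogeny category of $\Gamma$-lattices (Lemma~\ref{lemm:CategoryOfToriUptoIsogeny}) via the anti-equivalence $T \mapsto \widehat{T}$, and you have filled in the routine details it leaves implicit: injectivity of $\widehat{\pi}$, splitting the monomorphism, and clearing denominators using torsion-freeness of $\Hom(\widehat{S},\widehat{S})$.
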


\noindent Let $A$ be an abelian group. We fix the following notations: 
\begin{itemize}
    \item $A_{\div}$ denotes the maximal divisible subgroup of $A$,
    \item $A_{\infty-\div} \coloneq \bigcap_{n \in \mathbf{N}} n A$ is the group of infinitely divisible elements,
    \item $A_{\wedge}$ the limit $\varprojlim_{n\in \mathbf{N}} A/ nA$.
\end{itemize}

\begin{defn}
    Let $T$ be a $K$-torus. We define the \textit{adelic space} as 
    \[
    \Acal(T) \coloneq T(\Abf_K)/T(K).
    \]
\end{defn}

\noindent Throughout this text, we denote the group law of a $K$-torus $T$ multiplicatively while the group law of the adelic space $\mathcal{A}(T)$ is denoted additively.

\begin{coro} \label{coro:DivA(T)Surjective}
    Let $\pi \colon S \to T$ be a surjective morphism of $K$-tori. Then the induced~map 
    \[
        \pi \colon \Acal(S)_{\div} \longrightarrow \Acal(T)_{\div}
    \]
    is surjective.
\end{coro}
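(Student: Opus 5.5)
Apply Corollary~\ref{coro:CategoryOfToriUptoIsogeny} to the surjection $\pi \colon S \to T$. It gives a morphism of $K$-tori $\sigma \colon T \to S$ and an integer $n \geq 1$ with $\pi \circ \sigma = [n]_T$. Every morphism of $K$-tori induces a group homomorphism on $K$-points and on adelic points, compatible with the diagonal embedding. Hence $\sigma$ and $\pi$ induce homomorphisms $\sigma_* \colon \Acal(T) \to \Acal(S)$ and $\pi_* \colon \Acal(S) \to \Acal(T)$, written additively, with $\pi_* \circ \sigma_* = n \cdot \id_{\Acal(T)}$.

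A group homomorphism sends divisible subgroups to divisible subgroups. Since $A_{\div}$ is the maximal divisible subgroup, we get $\sigma_*(\Acal(T)_{\div}) \subseteq \Acal(S)_{\div}$ and $\pi_*(\Acal(S)_{\div}) \subseteq \Acal(T)_{\div}$. In particular, the map in the statement is well defined.

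For surjectivity, let $a \in \Acal(T)_{\div}$. Since $\Acal(T)_{\div}$ is a divisible group, there is some $b \in \Acal(T)_{\div}$ with $n b = a$. Put $c \coloneq \sigma_*(b)$, which lies in $\Acal(S)_{\div}$ by the previous paragraph. Then
\[
    \pi_*(c) = \pi_*\sigma_*(b) = n b = a,
\]
so $a$ lies in the image of $\pi_*$ restricted to $\Acal(S)_{\div}$.

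The only point needing care is the choice of $b$. We must take $b$ inside the maximal divisible subgroup itself, not merely an element of $\Acal(T)$ with $nb = a$. This is possible because $\Acal(T)_{\div}$ is divisible as a group in its own right; by contrast, the group $\Acal(T)_{\infty-\div}$ of infinitely divisible elements need not be. No arithmetic input about geometric global fields is needed; the statement is formal once Corollary~\ref{coro:CategoryOfToriUptoIsogeny} is available.
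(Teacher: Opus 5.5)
Your proof is correct and is the paper's argument: from $\pi\circ\sigma=[n]_T$ the paper concludes via the one-line chain $\Acal(T)_{\div} = n\Acal(T)_{\div} = \pi\bigl(\sigma(\Acal(T)_{\div})\bigr) \subseteq \pi(\Acal(S)_{\div})$, which is your element-wise argument in compressed form. Your closing caveat about $\Acal(T)_{\infty-\div}$ is not needed, and for these adelic spaces it is moot, since Proposition~\ref{prop properties adelic spaces}~\ref{prop properties adelic spaces: div = infity div} shows the two subgroups coincide.
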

\begin{proof}
    By Corollary~\ref{coro:CategoryOfToriUptoIsogeny}, there exist $n \ge 1$ and a morphism $\sigma\colon T \to S$ such that
    \[
        \pi \circ \sigma = [n]_T.
    \]
    It follows that
    \[
        \Acal(T)_{\div} = n\Acal(T)_{\div} = \pi\bigl(\sigma(\Acal(T)_{\div})\bigr) \subseteq \pi(\Acal(S)_{\div}),
    \]
    which proves the result.
\end{proof}
For a $K$-torus $T$ we define the Tate--Shafarevich group as
\[
\Sha^n(K,T) \coloneq \Ker \left( \H^n(K,T) \to \prod_{v \in C^{(1)}} H^n(K_v,T) \right).
\]
The following proposition corresponds to \cite[Proposition~5.2, Théorème~5.2, and Corollaire~5.6]{HI2019Adelique} for fields of type (a). However, the result is new for fields of type (b).
\begin{prop}\label{prop properties adelic spaces}
    Let $K$ be a geometric global field and $T$ a $K$-torus. Then, we have
    \begin{enumerate}
        \item\label{prop properties adelic spaces: div = infity div} $\Acal(T)_{\div} = \Acal(T)_{\infty-\div}$.
        \item\label{prop properties adelic spaces: Poitou-Tate} We have an exact sequence
        \[
            T(K)_\wedge \longrightarrow T(\Abf_K)_\wedge \longrightarrow \Hom\bigl(\H^2(K,\widehat{T}), \Qbf/\Zbf\bigr) \longrightarrow \Hom\bigl(\Sha^2(K,\widehat{T}), \Qbf/\Zbf\bigr) \longrightarrow 0,
        \]
        where the second arrow is induced by the local cup-product pairings
        \[
            T(K_v) \times \H^2(K_v,\widehat{T}) \longrightarrow \Br(K_v) \cong \Qbf/\Zbf.
        \]
        \item\label{prop properties adelic spaces: finite exponent} The torsion subgroup of $\Acal(T)/\Acal(T)_{\div}$ has finite exponent.
    \end{enumerate}
\end{prop}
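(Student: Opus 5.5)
The plan is to prove item~\ref{prop properties adelic spaces: Poitou-Tate} first and then deduce items~\ref{prop properties adelic spaces: div = infity div} and~\ref{prop properties adelic spaces: finite exponent} from it together with finiteness properties of the cohomology of $\widehat{T}$. For fields of type (a) the statement is \cite{HI2019Adelique}, so the work is for type (b). There we will use the Poitou--Tate sequences for tori over $K$ from \cite{Izquierdo2019Dualite}, which play the role of \cite[Proposition~2.1]{CTH2015DualCt}. The local duality we need is perfect: $K_v$ is a complete discretely valued field with residue field of type $\mathbf{C}\lau{t}$, so $\Br(K_v)\cong \Qbf/\Zbf$ and $\H^1(K_v,T)$ pairs perfectly with $\H^1(K_v,\widehat{T})$.

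\textbf{Step 1: the exact sequence.} For each $n$, consider the finite module $T[n]$ and the Kummer sequence $1\to T[n]\to T\xrightarrow{n}T\to 1$. The Poitou--Tate sequence for the finite module $T[n]$ over $K$ gives an exact sequence
\[
\H^1(K,T[n]) \longrightarrow \prod\nolimits'_v \H^1(K_v,T[n]) \longrightarrow \H^1(K,\widehat{T}\otimes\Zbf/n)^D \longrightarrow \Sha^1(K,T[n])\ldots
\]
Taking the Kummer images of $T(K)/n$ and $T(\Abf_K)/n$, we will produce exact sequences
\[
T(K)/n \to T(\Abf_K)/n \to \H^2(K,\widehat{T})[n]^D \to \Sha^2(K,\widehat{T})[n]^D\to 0.
\]
Alternatively, we can derive these directly from the Poitou--Tate sequence for $T$ in \cite{Izquierdo2019Dualite} by reducing mod $n$. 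Here we use that $\H^2(K,\widehat{T})$ is torsion, being $\H^1(K,\widehat{T}\otimes\Qbf/\Zbf)$ up to a finite group, and that $\Sha^2(K,\widehat{T})$ is finite by \cite{Izquierdo2019Dualite}. We then pass to the inverse limit over $n$. The terms $\H^2(K,\widehat{T})[n]^D$ and $\Sha^2(K,\widehat{T})[n]^D$ are finite, since $\H^2(K,\widehat{T})[n]$ is a quotient of $\H^1(K,\widehat{T}/n)$, which is finite because $K$ has cohomological dimension $2$ with finite Galois cohomology of finite modules. Finite levels satisfy Mittag-Leffler, so the limit stays exact. The limits $\varprojlim \H^2(K,\widehat{T})[n]^D$ and $\varprojlim \Sha^2(K,\widehat{T})[n]^D$ are $\Hom(\H^2(K,\widehat{T}),\Qbf/\Zbf)$ and $\Hom(\Sha^2(K,\widehat{T}),\Qbf/\Zbf)$ respectively. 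This gives~\ref{prop properties adelic spaces: Poitou-Tate}.

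\textbf{Step 2: divisibility.} From Step 1, $\Acal(T)/n$ injects into the finite group $\H^2(K,\widehat{T})[n]^D$ modulo the image of $T(K)$. Hence $\Acal(T)/n\Acal(T)$ is finite for every $n$. We have $\Acal(T)_{\infty-\div}=\Ker(\Acal(T)\to \Acal(T)_\wedge)$, and by exactness this kernel identifies with $\Ker\bigl(\Acal(T)\to \Hom(\H^2(K,\widehat{T}),\Qbf/\Zbf)\bigr)$. To show this kernel is divisible, we will use finiteness of $\Acal(T)/n$ together with the compactness-type argument of \cite[Prop.~5.2]{HI2019Adelique}. Given $a$ infinitely divisible, the sets $\{b: nb=a\}$ modulo the finite group $\Acal(T)[n]$-cosets form a projective system. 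We then use that $\Acal(T)[n]$ is finite, coming from the finiteness of $\Sha^1(K,T)$ and of $T(K_v)[n]$ in the relevant sense, to choose compatible roots. This yields $\Acal(T)_{\infty-\div}\subseteq\Acal(T)_{\div}$; the reverse inclusion is trivial. We expect this step to be the main obstacle, since it requires controlling $n$-torsion of $\Acal(T)$. We would control it via the sequence $T(K)[n]\to T(\Abf_K)[n]\to\Acal(T)[n]\to T(K)/n\to T(\Abf_K)/n$ and the finiteness of $\Sha^1(K,T[n])$. If the finiteness of $\Acal(T)[n]$ fails, we would instead first reduce to quasi-trivial tori using Corollary~\ref{coro:DivA(T)Surjective} and a flasque resolution.

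\textbf{Step 3: finite exponent.} By~\ref{prop properties adelic spaces: div = infity div} and Step 1, $\Acal(T)/\Acal(T)_{\div}$ embeds into $\Hom(\H^2(K,\widehat{T}),\Qbf/\Zbf)$, and its torsion embeds into the torsion of this profinite dual. Choose a splitting field $L$ of degree $d$. Restriction–corestriction shows that $d$ kills $\H^2(K,\widehat{T})$ up to the divisible part, using $\H^2(L,\Zbf^r)=\H^1(L,\Qbf/\Zbf)^r$. The dual of $\H^1(L,\Qbf/\Zbf)$ is torsion-free, being $\pi_1^{ab}$-type profinite-free in the sense of \cite{Izquierdo2019Dualite}. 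Hence the torsion of the dual is killed by $d$ times the exponent of the finite part, which gives~\ref{prop properties adelic spaces: finite exponent}.
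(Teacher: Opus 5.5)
There is a genuine gap. Your argument relies on finiteness properties that hold over number fields but fail over geometric global fields, and the decisive Step~2 depends on them.

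\textbf{Step~1.} $\H^1(K,\widehat{T}/n)$ is not finite. Since $K$ contains all roots of unity, already $\H^1(K,\Zbf/n)\cong K^\times/K^{\times n}$, and this group is infinite. For instance, for $K=\mathbf{C}\lau{x,y}$ the primes $y-cx$, $c\in\mathbf{C}$, of the factorial ring $\mathbf{C}[\![x,y]\!]$ are independent modulo $n$-th powers. Finiteness holds for the completions $K_v$ and for \'etale cohomology of open subschemes of $C$, not for $K$ itself. So the Mittag-Leffler passage to the limit in Step~1 is not justified as written. In addition, at a fixed level $n$ your sequence can fail to be exact at $T(\Abf_K)/n$ because of $\Sha^1(K,T)$, which only disappears in the limit.

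\textbf{Step~2.} More seriously, both $\Acal(T)/n$ and $\Acal(T)[n]$ are infinite, already for $T=\mathbf{G}_m$. First, $\Acal(\mathbf{G}_m)[n]$ contains $\bigl(\prod_v\mu_n\bigr)/\mu_n$. Second, each residue field $\kappa(v)$ is itself a complete discretely valued field with residue field $\mathbf{C}$. Hence the local units give a subgroup of $\Acal(\mathbf{G}_m)/n$ surjecting onto $\prod_v\Zbf/n$ modulo a finite group; the finite groups involved come from global units and from $\Pic(C)[n]$. So the projective system of sets of $n$-th roots in Step~2 consists of infinite sets, and the compactness argument collapses. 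Your fallback of reducing to quasitrivial tori does not help, since the problem is already present for $\mathbf{G}_m$ over finite extensions of $K$.

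\textbf{The missing idea.} It is exactly the one lemma the paper adds to the arguments of \cite{HI2019Adelique}, which it otherwise follows word by word. One first kills a \emph{divisible} subgroup $U^0\subseteq\Acal(\mathbf{G}_m)$: the image of the local units whose residue in $\kappa(v)$ has valuation $0$. It is divisible by Hensel's lemma and contains in particular $\prod_v\mu_n$. The torsion of $\Acal(\mathbf{G}_m)/U^0$ is then of cofinite type. Indeed, it is an extension of a subgroup of $\Pic(C)_{\tors}$ by a finite group, and the $n$-torsion of $\Pic(C)$ is finite, being a quotient of the finite group $\H^1(C,\mu_n)$. The projective-system argument runs on this quotient. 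Divisibility then lifts back to $\Acal(\mathbf{G}_m)$ because $U^0$ is divisible. One then passes to quasitrivial and general tori as in \cite{HI2019Adelique}.

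\textbf{Step~3.} This step is fine once (1) and (2) are established. The relevant fact is that $\H^1(L,\Qbf/\Zbf)\cong L^\times\otimes\Qbf/\Zbf$ is divisible by Kummer theory. Hence $d$ kills $\H^2(K,\widehat{T})/\H^2(K,\widehat{T})_{\div}$, and therefore also the torsion of $\Hom\bigl(\H^2(K,\widehat{T}),\Qbf/\Zbf\bigr)$; no finite part intervenes.
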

\noindent The proofs given in \cite{HI2019Adelique} can be followed word by word to obtain Proposition~\ref{prop properties adelic spaces} except for the following technical lemma.
\begin{lemm}\label{lemm: divisible subgroup cofinite quotient}
    There exists a divisible subgroup $U^0$ of $\Acal(\mathbf{G}_m)$ such that the torsion of the quotient $\Acal(\mathbf{G}_m)/ U^0$ is a group of cofinite type — i.e. for every $n$ the $n$-torsion is finite.
\end{lemm}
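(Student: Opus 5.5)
We need a divisible subgroup $U^0$ of $\Acal(\mathbf{G}_m)=\Abf_K^\times/K^\times$ such that the quotient has torsion of cofinite type. The model is the number field case. There one takes $U^0$ to be the image of the connected component of the idèle class group. For type (a) fields, the analogue in \cite{HI2019Adelique} uses the units: the residue fields $\kappa(v)$ are finite extensions of $\mathbf{C}\lau{t}$, so $1+\mathfrak m_v$ and the group of Teichmüller-type units are divisible. The plan for type (b) is the same, and the work is to check cofiniteness. Take
\[
U^0 \coloneq \text{image in } \Acal(\mathbf{G}_m) \text{ of } \prod_{v\in C^{(1)}} \mathcal{O}_v^{\times}\cap D_v,
\]
where $D_v\subseteq \mathcal{O}_v^\times$ is the maximal divisible subgroup of $\mathcal{O}_v^\times$.

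First we identify $D_v$. The residue field $\kappa(v)$ of a point $v\in C^{(1)}$ is a complete discretely valued field of the form $\mathbf{C}\lau{s}$ (finite over such, hence again of this form). So $\kappa(v)^\times=\mathbf{C}^\times\times s^{\mathbf{Z}}\times(1+s\mathbf{C}[\![s]\!])$. The divisible part of $\mathcal{O}_v^\times$ is therefore the preimage of $\mathbf{C}^\times\times(1+s\mathbf{C}[\![s]\!])$, and $\mathcal{O}_v^\times/D_v\cong\mathbf{Z}$, via the valuation of the residue. Divisibility of $D_v$ itself follows by Hensel's lemma in characteristic $0$, since $1+\mathfrak m_v$ is uniquely divisible. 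A product of divisible groups is divisible, and so is its image, so $U^0$ is divisible.

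Next we describe the quotient. Set $\Lambda\coloneq\bigoplus_v \mathbf{Z}$ for the valuations and $\Lambda'$ for the residual valuations, so that $\Abf_K^\times/\prod_v D_v$ is an extension of $\Lambda$ by $\prod_v \mathbf{Z}$. There is an exact sequence
\[
K^\times\longrightarrow \Abf_K^\times\Big/\prod_v D_v\longrightarrow \Acal(\mathbf{G}_m)/U^0\longrightarrow 0.
\]
For $n$-torsion, an element $x$ with $x^n\in K^\times\prod_v D_v$ can be adjusted using divisibility of $D_v$. This reduces to the kernel and cokernel of multiplication by $n$ on $K^\times$ modulo elements that are locally in $D_v$, that is, locally $n$-th powers up to $D_v$ at all places. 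The $n$-torsion of the quotient then sits in an extension involving two groups:
\begin{enumerate}
\item the finite group $\mu_n$, and
\item a group of the form $\{a\in K^\times/K^{\times n}: a\in K_v^{\times n}D_v\ \forall v\}$, together with the $n$-torsion of $\Pic$-type class groups coming from the divisor map $K^\times\to\Lambda$.
\end{enumerate}

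The main obstacle is proving the finiteness of these groups in type (b). The class group piece is $\Pic(C)/n$ and ${}_n\Pic(C)$ for $C=\Spec R\setminus\{\mathfrak m\}$. I would control it through a resolution of singularities $\tilde X\to\Spec R$ with exceptional divisor $E$. Then $\Pic(C)$ is the quotient of $\Pic(\tilde X)$ by the classes of the components of $E$. Since $\Pic^0$ of the exceptional fibre is a connected algebraic group over $\mathbf{C}$ and hence divisible, its $n$-torsion and cotorsion are finite, and a finite-rank discrete part remains. The "residual valuation" conditions are handled analogously using $\H^1(C,\mu_n)$, which is finite: it is the étale cohomology of the punctured spectrum, computed by the link of the singularity, a $3$-manifold with finitely generated $\pi_1$. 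Concretely, I would show that the second group injects into $\H^1(C,\mu_n)$ modulo a finite group, by proving that such an $a$ is unramified at every $v$. This uses that $D_v$-twists contribute only residually divisible units. I would then conclude by the finiteness of étale cohomology of $C$ with finite coefficients, as in \cite{Izquierdo2019Dualite}. Putting these together, every $n$-torsion subgroup of $\Acal(\mathbf{G}_m)/U^0$ is finite.
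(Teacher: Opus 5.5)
Your $U^0$ is the paper's: $D_v$ is exactly the group of units whose reduction in $\kappa(v)\cong\mathbf{C}\lau{s}$ has valuation $0$, and your divisibility argument is the same as the paper's. For finiteness you take a different, Kummer-theoretic route. As written, its central reduction is too loose to check: you say the $n$-torsion ``sits in an extension involving'' $\mu_n$ and a Selmer-type group, ``together with'' class groups, without stating an exact sequence, and some of these pieces are superfluous.

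Made precise, the argument is cleaner than you expect. Put $B\coloneq\Abf_K^\times/\prod_vD_v$ and let $I$ be the image of $K^\times$ in $B$.
\begin{itemize}
\item $B$ is torsion-free, being the extension of $\bigoplus_v\mathbf{Z}$ by $\prod_v\mathbf{Z}$ that you describe.
\item Divisibility gives $\prod_vD_v\subseteq(\Abf_K^\times)^n$. Hence $B/nB=\Abf_K^\times/(\Abf_K^\times)^n$, and your condition $a\in K_v^{\times n}D_v$ is just $a\in K_v^{\times n}$.
\item $K^\times\cap\prod_vD_v\subseteq R^\times=R^{\times n}$: since $R$ is normal, $K\cap\bigcap_v\mathcal{O}_v=R$, and Hensel's lemma gives $R^\times=R^{\times n}$. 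Hence $I/nI=K^\times/K^{\times n}$.
\end{itemize}
The snake lemma for multiplication by $n$ on $0\to I\to B\to\Acal(\mathbf{G}_m)/U^0\to0$ then gives
\[
\bigl(\Acal(\mathbf{G}_m)/U^0\bigr)[n]\cong\Ker\Bigl(K^\times/K^{\times n}\longrightarrow\prod_{v\in C^{(1)}}K_v^\times/K_v^{\times n}\Bigr),
\]
that is, $\Sha^1(K,\mu_n)$ relative to $C^{(1)}$. An element that is locally an $n$-th power everywhere is unramified on the Dedekind scheme $C$, so this group embeds in $\H^1(C,\mu_n)$, which is finite by \cite{Izquierdo2019Dualite}. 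No $\mu_n$ term and no class group enter.

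The paper instead uses the divisor map $\varphi\colon\Acal(\mathbf{G}_m)\to\Pic(C)$. It identifies $\Ker(\varphi)/U^0$ with the residual-valuation group $\prod_v\mathbf{Z}$ modulo global units, whose torsion is finite. The remaining torsion is bounded by ${}_n\Pic(C)$, a quotient of $\H^1(C,\mu_n)$ by Kummer theory.

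Both arguments rest on the same input, the finiteness of $\H^1(C,\mu_n)$. Your cleaned-up version identifies the $n$-torsion exactly, while the paper's displays the quotient as an extension of $\Pic(C)$. In either version, your resolution of $\Spec R$ and the analysis of $\Pic$ of the exceptional fibre are an unnecessary detour. Kummer theory on $C$ already bounds ${}_n\Pic(C)$, and $\Pic(C)/n$ plays no role in the torsion.
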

\begin{proof}
    Let $v \in C^{(1)}$ that does not lie above the $x$-adic valuation. Note that the residue field $\kappa(v)$ is a finite extension of $\mathbf{C}\lau{x}$ and carries a natural valuation $\operatorname{val}_v: \kappa(v) \to \mathbf{Z}$. We define $U_v^0 \subseteq \mathcal{O}_v^{\times}$ to be the subgroup formed by elements $f$ such that $\operatorname{val}_v\left(\overline{f}\right) = 0$ where $\overline{f}\in \kappa(v)$ is the reduction of $f$. For a place $w \in C^{(1)}$ above the $x$-adic valuation we define $U_w^0$ analogously by replacing the $x$-adic valuation by the $y$-adic valuation. Let $U^0 \subseteq \Acal(\mathbf{G}_m)$ be the image of $\prod_{v \in C^{(1)}} U_v^0$. A straightforward application of Hensel's lemma ensures that $U^0$ is a divisible group. \par
    Set $C_K^0 \coloneq \left(\Acal(\mathbf{G}_m)/U^0\right)_{\mathrm{tors}}$. There is a natural morphisms $\varphi:\Acal(\mathbf{G_m} \to \Pic (C)$ given by $(f_v)_{v \in C^{(1)}} \mapsto \sum_{v\in C^{(1)}} v(f_v) \cdot v$ whose kernel is isomorphic to $\prod_{v \in C^{(1)}} \mathcal{O}_v^{\times} / R^{\times}$ where $R$ is the integral closure of $\mathbf{C}[\![x,y]\!]$ in $K$. Note that $\varphi(U^0) = 0$ and $\Ker(\varphi) / U^0 \simeq \left(\prod_{v\in C^{(1)}} \mathbf{Z} \right)/\mathbf{Z}\cdot(e_v)_{v\in C^{(1)}}$ where
    \[
    e_v = 
    \begin{cases}
        \operatorname{val}_v(x) & v\nmid x \\
        \operatorname{val}_v(y) & v\mid x.
    \end{cases}
    \]
    Note that $F \coloneq \left(\left(\prod_{v\in C^{(1)}} \mathbf{Z} \right)/\mathbf{Z}\cdot (e_v)_{v\in C^{(1)}}\right)_{\mathrm{tors}}$ is a finite group. Then, we can deduce an exact~sequence
    \[
        0 \to F \to C_K^0 \to (\Pic (C))_{\mathrm{tors}}.
    \]
    This is enough to conclude that $C_K^0$ is of cofinite type because for every $n\in \mathbf{Z}$ we have $\Pic X [n] \simeq \H^1(X,\mu_n)$ and the latter group is finite \cite[Théorème~3.1]{Izquierdo2019Dualite}.
\end{proof}



\begin{lemm} \label{lemm:TorEqualTorTor:KerRhoFiniteExponent}
    Let $\tau : S \to T$ ba a morphism of $K$-tori whose kernel $R$ is a $K$-torus and $\rho: Q \to R$ a surjection of $K$-tori. Let $\pi: Q \to S$ be the composition of $\rho$ with the natural inclusion of $R$ in $S$. Then, the kernel of the induced morphism
    \[
        \bar{\tau} \colon \frac{\Acal(S)}{\Acal(S)_{\div} + \pi(\Acal(Q)))} \longrightarrow \frac{\Acal(T)}{\Acal(T)_{\div}},
    \]
    has finite exponent.
\end{lemm}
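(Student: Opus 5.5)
The plan is to reduce to a statement about the image torus and then use the finite-exponent property of Proposition~\ref{prop properties adelic spaces}(\ref{prop properties adelic spaces: finite exponent}) together with Corollary~\ref{coro:CategoryOfToriUptoIsogeny}. Put $T' \coloneq \tau(S) \subseteq T$, a subtorus of $T$. This gives an exact sequence of tori $1 \to R \to S \to T' \to 1$. The map $\bar\tau$ factors as
\[
\frac{\Acal(S)}{\Acal(S)_{\div} + \pi(\Acal(Q))} \xrightarrow{\ \alpha\ } \frac{\Acal(T')}{\Acal(T')_{\div}} \xrightarrow{\ \beta\ } \frac{\Acal(T)}{\Acal(T)_{\div}}.
\]
It suffices to show that $\Ker\alpha$ has finite exponent and that $\Ker\beta$ has finite exponent. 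Indeed, if $n\Ker\beta = 0$ and $m\Ker\alpha=0$, then every $x\in\Ker\bar\tau$ satisfies $\alpha(nx)=n\alpha(x)=0$, hence $nmx=0$.

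For $\beta$, I would use Corollary~\ref{coro:CategoryOfToriUptoIsogeny} applied in the semisimple isogeny category. The inclusion $T'\hookrightarrow T$ has a retraction up to isogeny: there are a morphism $r\colon T\to T'$ and $n\ge1$ with $r\circ\iota=[n]_{T'}$. This follows by dualising, since the surjection $\widehat T\to\widehat{T'}$ of lattices splits after tensoring with $\Qbf$. Now take $x\in\Acal(T')$ with $\iota(x)\in\Acal(T)_{\div}$. Then $nx=r(\iota(x))\in r(\Acal(T)_{\div})\subseteq\Acal(T')_{\div}$, so $n$ kills $\Ker\beta$.

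For $\alpha$, take $x\in\Acal(S)$ with $\tau(x)\in\Acal(T')_{\div}$. By Corollary~\ref{coro:DivA(T)Surjective} applied to $S\to T'$, there is $d\in\Acal(S)_{\div}$ with $\tau(d)=\tau(x)$. Hence $x-d$ lies in the kernel of $\Acal(S)\to\Acal(T')$. The main obstacle is to understand this kernel. An element $y\in S(\Abf_K)$ mapping into $T'(K)$ can be corrected by an element of $S(K)$ only modulo the obstruction in $\H^1(K,R)$, and $\H^1(K_v,R)$ is nontrivial in general. I would handle this step as follows. Write $y$ for a lift of $x-d$. Its image $t\in T'(K)$ has trivial local boundary in $\H^1(K_v,R)$ for every $v$, so $\partial t\in\Sha^1(K,R)$. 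This group is finite, being the Pontryagin dual of a subquotient of $\Sha^2(K,\widehat R)$, a finiteness that I would take from the duality theorems cited in the paper. It is therefore killed by some $e$. Then $t^e$ lifts to some $s\in S(K)$, and $ey - s$ lies in $R(\Abf_K)$. Consequently $e(x-d)$ lies in the image of $\Acal(R)$.

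It remains to compare $\Acal(R)$ with $\rho(\Acal(Q))$. By Corollary~\ref{coro:CategoryOfToriUptoIsogeny} applied to $\rho$, there are $\sigma\colon R\to Q$ and $m\ge1$ with $\rho\circ\sigma=[m]_R$. Hence $m\Acal(R)\subseteq\rho(\Acal(Q))$, and so $me(x-d)\in\pi(\Acal(Q))$. Therefore $me$ kills $\Ker\alpha$. If finiteness of $\Sha^1(K,R)$ should not be available directly, the fallback is to bound the exponent of the obstruction using a splitting field of $R$, via a restriction–corestriction argument. The rest of the argument is formal.
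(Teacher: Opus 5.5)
Your proof is correct, and it departs from the paper's in two places. First, the paper works directly with $\tau$. It takes $\phi\colon T\to S$ with $\tau\circ\phi=[n_1]_T$ from Corollary~\ref{coro:CategoryOfToriUptoIsogeny} and applies Corollary~\ref{coro:DivA(T)Surjective} to $\tau$; both steps tacitly require $\tau$ to be surjective. That holds in the application, where $1\to R\to S\to T\to 1$ is exact, but it is not among the hypotheses as written. Your factorisation $\bar\tau=\beta\circ\alpha$ through the image torus $T'=\tau(S)$, with the retraction up to isogeny $r\circ\iota=[n]_{T'}$ killing $\Ker\beta$, proves the lemma exactly as stated, at the cost of one extra formal step.

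Second, to replace $t=\tau(y)\in T'(K)$ by a multiple that lifts to $S(K)$, the paper simply uses $t^{n_1}=\tau(\phi(t))$ with $\phi(t)\in S(K)$. You instead pass through the connecting map $T'(K)\to\H^1(K,R)$, which needs a uniform bound on the exponent of $\Sha^1(K,R)$. This is valid, but appealing to finiteness of $\Sha^1(K,R)$ via global duality is more than you need. Your own fallback already gives a uniform $e$: $\H^1(K,R)$ is killed by $[L:K]$ for a splitting field $L$ of $R$, by Hilbert~90 and restriction--corestriction. An isogeny section of the surjection $S\to T'$, as in the paper, would do it with no cohomology at all.

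The remaining steps match the paper's: the reduction to $\Ker(\Acal(S)\to\Acal(T'))$ via Corollary~\ref{coro:DivA(T)Surjective}, and $m\Acal(R)\subseteq\rho(\Acal(Q))$ via an isogeny section of $\rho$. Finally, Proposition~\ref{prop properties adelic spaces}~\ref{prop properties adelic spaces: finite exponent}, announced in your opening sentence, is never actually used.
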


\begin{proof}
    The homomorphism $\bar{\tau}$ is well defined because $\rho(\Acal(S)_{\div}) \subseteq \Acal(T)_{\div}$ and $\tau\circ \pi = 0$. On the other hand, by Corollary~\ref{coro:CategoryOfToriUptoIsogeny}, there exist $n_1,n_2 \ge 1$ and morphisms of tori
    \[
        \phi\colon T \longrightarrow S \qquad \text{and} \qquad \psi\colon R \longrightarrow Q
    \]
    such that
    \[
        \tau \circ \phi = [n_1]_{T} \qquad \text{and} \qquad \rho \circ \psi = [n_2]_{R}.
    \]
    We claim that $\Ker(\bar{\tau})$ is $(n_1n_2)$-torsion. Indeed, let $a \in \Acal(S)$ such that $\tau(a) \in \Acal(T)_{\div}$. By Corollary~\ref{coro:DivA(T)Surjective}, there exists $a' \in \Acal(S)_{\div}$ such that $\tau(a') = \tau(a)$. Since $a - a'$ and $a$ have the same image in
    \[
        \frac{\Acal(S)}{\Acal(S)_{\div} + \pi(\Acal(Q))},
    \]
    we may replace $a$ by $a - a'$, that is, we may assume $\tau(a) = 0$ in $\Acal(T)$. \par 

    Let $s_\Abf \in S(\Abf_K)$ be an adelic point representing $a$. Then, we have $\tau(s_\Abf) = t$ for some $t \in T(K)$ and the following equality holds
    \[
        \tau(s_\Abf^{n_1}) = t^{n_1} = \tau(\phi(t)).
    \]
    In particular, $r_\Abf \coloneq s_\Abf^{n_1} \phi(t)^{-1}$ belongs to $R(\Abf_K)$. Since $\phi(t) \in S(K)$, the class $n_1sa \in \Acal(S)$ of $s_\Abf^{n_1}$ is the image of the class $b \in \Acal(R)$ of $r_\Abf$. Now, we have $r_\Abf^{n_2} = \rho(\psi(r_\Abf))$, which~implies
    \[
        n_2b \in \rho(\Acal(Q))
    \]
    and $n_1 n_2a \in \pi(\Acal(Q))$. This proves that the class of $a$ in
    \[
        \frac{\Acal(S)}{\Acal(S)_{\div} + \pi(\Acal(Q))}
    \]
    is an element of $(n_1n_2)$-torsion.
\end{proof}


\subsection{Generalities on obstruction sets}\label{subsection generalities obstruction sets}

We now introduce various subsets of $X(\mathbf{A}_K)$ that contain the diagonal image of $X(K)$.

\begin{defn}
    Let $G$ a $K$-group and $X$ a $K$-variety. For a $X$-torsor $f:Y \to X$ under $G$ we define the descent set associated to $f$ as 
    \begin{equation*}
        X(\Abf_K)^f \coloneq \bigcup_{[\sigma] \in \H^1(K,G)} f^{\sigma}(Y^{\sigma}(\Abf_K)).
    \end{equation*}
\end{defn}

\begin{defn}
    Let $\mathcal{C}$ be a class of algebraic $K$-groups. For a $K$-variety $X$ we define the $\mathcal{C}$-descent set as
    \begin{equation*}
        X(\Abf_K)^{\mathcal{C}} \coloneq \bigcap_{G \in \mathcal{C}} \bigcap_{\substack{f:Y \to X \\G\textbf{-torsor}}} X(\Abf_K)^f.
    \end{equation*}
    We also define the iterated $\mathcal{C}$-descent set as 
    \begin{equation*}
        X(\Abf_K)^{\mathcal{C},\mathcal{C}} \coloneq \bigcap_{G \in \mathcal{C}} \bigcap_{\substack{f:Y \to X \\G\text{-torsor}}} \bigcup_{[\sigma] \in \H^1(K,G)} f^{\sigma} (Y^{\sigma}(\Abf_K)^{\mathcal{C}}).
    \end{equation*}
\end{defn}

\noindent The fact that the diagonal image of $X(K)$ is contained in $X(\mathbf{A}_K)^{\mathcal{C}}$ and  $X(\mathbf{A}_K)^{\mathcal{C},\mathcal{C}}$ for any class is a simple verification -- cf. \cite[p. 22]{Sko2001TorsorsAndRationalPoints}. \par

For the rest of the paper we focus on two classes of groups:
\begin{itemize}
    \item ``$\tor$'' denotes the class of algebraic tori, i.e. algebraic $K$-groups $T$ such that $T_{\overline{K}} \simeq \mathbf{G}_m^n$ for some $n$.
    \item ``$\mult$'' denotes the class of groups of multiplicative type, i.e. algebraic $K$-groups $M$ such that $T_{\overline{K}}$ is a subgroup of $\mathbf{G}_m^n$ for some $n$.
\end{itemize}

\begin{lemm} \label{lemm:TorTorSubgroup}
    Let $T$ be a $K$-torus. Then, the subset $T(\Abf_K)^{\tor,\tor}$ is a subgroup of $T(\Abf_K)$.
\end{lemm}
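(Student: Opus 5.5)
The plan is to reduce everything to functoriality of the toric descent set, using the fact that torsors over a torus under a torus are, up to twisting, extensions of algebraic groups. Closure under products and under inverses is then obtained by transporting local lifts through the group structure.

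\emph{Step 1: identify the torsors.} Let $f\colon Y\to T$ be a torsor under a $K$-torus $S$. The classification of torsors by their type (Colliot-Th\'el\`ene--Sansuc) gives $\H^1(T,S)\simeq \H^1(K,S)\oplus \Ext^1_K(T,S)$. This uses $K[\bar T]^\times/\bar K^\times=\widehat T$ and $\Pic(\bar T)=0$. Hence, after twisting by some $[\sigma_0]\in\H^1(K,S)$, the torsor $Y$ becomes $f_0\colon E\to T$ for an extension of $K$-groups $1\to S\to E\to T\to 1$, with $E$ a $K$-torus. Since the union over $[\sigma]\in\H^1(K,S)$ in the definition of the iterated set is insensitive to this initial twist, it suffices to prove the following claim. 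For each such extension, the set
\[
D_E \coloneq \bigcup_{[\sigma]\in\H^1(K,S)} f_0^{\sigma}\bigl(E^{\sigma}(\Abf_K)^{\tor}\bigr)
\]
is a subgroup of $T(\Abf_K)$ containing $1$. The set $T(\Abf_K)^{\tor,\tor}$ is then the intersection of these subgroups.

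\emph{Step 2: group structure on twists.} The twists $E^\sigma$ are $E$-torsors for the natural $E$-action, since $S$ is central in $E$. Multiplication in $E$ induces morphisms of $K$-varieties
\[
m\colon E^{\sigma_1}\times E^{\sigma_2}\to E^{\sigma_1\sigma_2},\qquad \iota\colon E^{\sigma}\to E^{\sigma^{-1}},
\]
lying over multiplication and inversion on $T$. This holds because $S$ is commutative and the twisting is by a cocycle with values in $S$ acting by translation. The descent set is functorial: if $g\colon X\to X'$ is a morphism, then $g(X(\Abf_K)^{\tor})\subseteq X'(\Abf_K)^{\tor}$, because torsors pull back along $g$. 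Applying this to $\iota$ gives closure of $D_E$ under inverses. The identity of $T$ lies in $D_E$, since $1\in E(K)\subseteq E(\Abf_K)^{\tor}$.

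\emph{Step 3 (main obstacle): products.} For closure under products, I need the inclusion
\[
E^{\sigma_1}(\Abf_K)^{\tor}\times E^{\sigma_2}(\Abf_K)^{\tor}\subseteq (E^{\sigma_1}\times E^{\sigma_2})(\Abf_K)^{\tor},
\]
and then apply functoriality to $m$. I would attack this through types once more. Write $X_i=E^{\sigma_i}$; these are smooth and geometrically integral with $\bar K[X_1\times X_2]^\times/\bar K^\times=\widehat E\oplus\widehat E$ and trivial $\Pic$. So $\operatorname{UPic}(\bar X_1\times\bar X_2)\simeq\operatorname{UPic}(\bar X_1)\oplus\operatorname{UPic}(\bar X_2)$. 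Consequently every torsor $W\to X_1\times X_2$ under a torus $P$ is, up to a twist by $\H^1(K,P)$, the contracted product $p_1^*W_1\wedge^{P\times P}p_2^*W_2$ pushed forward along the multiplication map $P\times P\to P$. Here $W_i\to X_i$ is a torsor under $P$.

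Given $(x_1,x_2)$ with $x_i\in X_i(\Abf_K)^{\tor}$, choose twists $\tau_i$ and adelic lifts $w_i\in W_i^{\tau_i}(\Abf_K)$ of $x_i$. The image of $(w_1,w_2)$ in the twist of $W$ by $\tau_1\tau_2$ (composed with the initial twist) is an adelic lift of $(x_1,x_2)$. Hence $(x_1,x_2)\in(X_1\times X_2)(\Abf_K)^f$ for every such $f$. The delicate points are the splitting of types over a product and the compatibility of the twists with contracted products. I would verify these first, using the exact sequence of Colliot-Th\'el\`ene--Sansuc for $\H^1(X,P)$ and the naturality of the type map.
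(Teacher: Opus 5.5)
Your Steps 1 and 2 are fine, but the decomposition at the heart of Step 3 is false in general. Because $\UPic(\bar X_1\times\bar X_2)\simeq\UPic(\bar X_1)\oplus\UPic(\bar X_2)$, the extended type of $W$ does split as $p_1^*\lambda_1+p_2^*\lambda_2$. To write $W$ as a twist of $p_1^*W_1\wedge p_2^*W_2$, however, you need $X_i$-torsors $W_i$ of extended type $\lambda_i$, and these exist only if $\ob(\lambda_i)=0$ in $\H^2(K,P)$. By functoriality of the obstruction, the existence of $W$ only gives $\ob(\lambda_1)+\ob(\lambda_2)=\ob(\lambda)=0$.

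When $X_i=E^{\sigma_i}$ has no rational point this really fails. Take an exact sequence of tori $1\to P\to F\to E\to 1$ with $F$ quasitrivial, so that $\partial\colon\H^1(K,E)\to\H^2(K,P)$ is injective. Let $W$ be the pullback of $F\to E$ along $m\colon E^{\sigma}\times E^{\sigma^{-1}}\to E$. Both components of its extended type equal the class $\mu\in\Ext^1_K(\widehat P,\widehat E)$ of $0\to\widehat E\to\widehat F\to\widehat P\to 0$. Now $\mu$ is realized on $E^{\sigma}$ if and only if $\partial([E^{\sigma}])=0$, i.e. if and only if $E^{\sigma}(K)\neq\varnothing$. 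The class $[E^{\sigma}]\in\H^1(K,E)$ can certainly be nonzero over a geometric global field, e.g. for $E=S\times T$ and $[\sigma]\neq 0$. Your argument never uses the hypothesis $x_i\in X_i(\Abf_K)^{\tor}$ in a way that excludes this: it is purely formal, whereas the lemma needs arithmetic input.

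The missing idea is exactly the one the paper uses. If $E^{\sigma}(\Abf_K)^{\tor}\neq\varnothing$, then $E^{\sigma}(\Abf_K)^{\Be}\neq\varnothing$ by \cite[Theorem~6.4]{ILA2021LocalGlobal}. Hence $E^{\sigma}(K)\neq\varnothing$ by the $\Be$-local--global principle for torsors under tori \cite[Corollaire~8.3]{CTH2015DualCt}, \cite[Th\'eor\`eme~5.2]{Izquierdo2019Dualite}.

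With this, your Step 3 can be repaired: on $X_i\cong E$ every obstruction vanishes, since $\H^2(K,P)\to\H^2(X_i,P)$ is injective. But it also becomes superfluous. Only the classes $[\sigma]$ lying in $\delta(T(K))$ contribute, and for these $f_0^{\sigma}$ is $f_0$ followed by translation by some $u\in T(K)$. Hence $D_E=T(K)\cdot f_0\bigl(E(\Abf_K)^{\tor}\bigr)$, and it suffices that $E(\Abf_K)^{\tor}$ is a subgroup. The paper obtains this formally. For $g$ a torsor under a torus $S'$, it identifies each descent set $E(\Abf_K)^{g}$ with the kernel of the homomorphism $E(\Abf_K)\to\bigl(\prod_v\H^1(K_v,S')\bigr)/\Img\H^1(K,S')$, $e_\Abf\mapsto[g^{-1}(e_\Abf)]$.
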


\begin{proof}
    Notice that for a a torsor $f\colon Y \to T$ under a torus $S$, the set $T(\Abf_K)^f$ is identify with the kernel of the homomorphism
    \[
        T(\Abf_K) \longrightarrow \left(\prod_{v\in C^{(1)}} \H^1(K_v,S) \right) {\big / } \Img(\H^1(K,S))
    \]
    that takes $t_{\mathbf{A}}$ to the class of $f^{-1}(t_{\mathbf{A}})$. In particular, $T(\Abf_K)^{\tor}$ is subgroup $T(\Abf_K)$.\par
    For every torsor $f\colon Y \to T$ under a torus $S$, there exists an exact sequence of $K$-tori
    \[
    1 \longrightarrow S \longrightarrow R \overset{g}{\longrightarrow} T \longrightarrow 1
    \]
    such that $Y$ is a $K$-torsor under $R$, thanks to \cite[Theorem~A.1]{ILA2021LocalGlobal}. For every $[\sigma] \in \H^1(K,S)$ such that $Y^{\sigma}(\mathbf{A}_K)^{\tor} \neq \varnothing$ we have $Y^{\sigma} \simeq R$ thanks to \cite[Theorem~6.4]{ILA2021LocalGlobal}, \cite[Corollaire~8.3]{CTH2015DualCt}, and \cite[Théorème~6.2]{Izquierdo2019Dualite}. Moreover, after trivializing $Y^{\sigma}$ the map $f$ is identified with the composite of $g$ with a translation by a point of $T(K)$. Then, 
    \[
        \bigcup_{[\sigma] \in \H^1(K,S)} f^{\sigma}(Y^{\sigma}(\Abf_K)^{\tor}) = T(K)g(R(\Abf_K)^{\tor}).
    \]
    Taking the intersection for all $K$-tori $S$ and torsors $f$, we conclude the lemma.
\end{proof}

\noindent We now introduce a slight variant of an obstruction set introduced in \cite{ILA2021LocalGlobal}. Recall that a torus $Q$ is said to be {\em quasitrivial} if there exists an étale algebra $E/K$ such that $Q \simeq \Res_{E/K} \mathbf{G}_{m,E}$ where $\Res_{E/K}$ denotes the Weil restriction with respect to $E/K$.
\begin{defn}
    Let $X$ be a $K$-variety and $B \in \{\Be, \Br_1, \Br\}$. Define
    \[
        X(\Abf_K)^{\qt, B} \coloneq \bigcap_{\substack{f \colon Y \xrightarrow{Q}X \\ Q\text{ quasitrivial} }} f(Y(\Abf_K)^{B}).
    \]
\end{defn}

\begin{remk}
    In the case $B = \Be$, the notation coincides with the one in \cite[Definition~6.2]{ILA2021LocalGlobal}. However, our definition corresponds to the ``more natural'' related obstruction mentioned in \cite[Remark~6.3]{ILA2021LocalGlobal}, which is {\em a priori} stronger than the one defined in \cite[Definition~6.2]{ILA2021LocalGlobal}.
\end{remk}

\noindent A straightforward verification shows that all of these obstructions are functorial. For instance, for every morphism $f \colon X \to Y$ we have 
\begin{align*}
    f(X(\mathbf{A}_K)^{\qt,B}) & \subseteq Y(\mathbf{A}_K)^{\qt,B} \\
    f(X(\mathbf{A}_K)^{\tor}) & \subseteq Y(\mathbf{A}_K)^{\tor}\\
    f(X(\mathbf{A}_K)^{\tor,\tor}) & \subseteq Y(\mathbf{A}_K)^{\tor,\tor}.
\end{align*}
\section{Torsors of prescribed extended types}\label{sec:TorsorOfGivenXType}

\subsection{Extended types and quasitrivial realizations} \label{subsec:QuasitrivialRealization}

Let $X$ be a smooth geometrically integral variety defined over a field $k$. As in \cite{BvH2009UPic}, we consider the following two-term complexes of Galois modules
\[
    \KD(\bar{X}) \coloneq \left[\bar{k}(X)^\times \longrightarrow \Div(\bar{X})\right] \qquad \text{and} \qquad \UPic(\bar{X}) \coloneq  \left[\bar{k}(X)^\times/\bar{k}^\times \longrightarrow \Div(\bar{X})\right],
\]
both concentrated in degrees $-1$ and $0$. The cohomology of $\UPic(\bar{X})$ is given by
\[
    \H^{-1}(\UPic(\bar{X})) = \bar{k}[X]^\times/\bar{k}^\times \qquad \text{and} \qquad \H^0(\UPic(\bar{X})) = \Pic(\bar{X}).
\]
The natural morphism $\KD(\bar{X}) \to \UPic(\bar{X})$ fits in a distinguished triangle in the bounded derived category $\Dcal^+(k)$:
\begin{equation} \label{eq:TriangleKDAndUPic}
    \bar{k}^\times[1] \longrightarrow  \KD(\bar{X}) \longrightarrow \UPic(\bar{X}) \longrightarrow \bar{k}^\times[2].
\end{equation}
\noindent Let $M$ be any smooth $k$-group of multiplicative type, i.e. the order of $\widehat{M}_{\tors}$ is invertible in $k$. The following result is established in \cite[Proposition~8.1 and Definition~8.2]{HS2013Descent}.

\begin{prop} \label{prop:ExtendedType}
    Applying $\RHom_{\Dcal^+(k)}\bigl(\widehat{M},-\bigr)$ to \eqref{eq:TriangleKDAndUPic} yields a functorial exact sequence
    \[
        \H^1(k,M) \longrightarrow \H^1(X,M) \xrightarrow{\XType} \Hom_{\Dcal^+(k)}\bigl(\widehat{M},\UPic(\bar{X})\bigr) \overset{\ob}{\longrightarrow} \H^2(k,M) \longrightarrow \H^2(X,M).
    \]
    More precisely, there is a natural isomorphism 
    \[
        \H^1(X,M) \cong \Hom_{\Dcal^+(k)}\bigl(\widehat{M},\KD(\bar{X})\bigr),
    \]
    and the map $\XType$ is induced by the morphism $\KD(\bar{X}) \to \UPic(\bar{X})$.
\end{prop}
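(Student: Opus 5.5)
The plan is to derive the sequence from the distinguished triangle \eqref{eq:TriangleKDAndUPic} by the long exact sequence of $\Hom_{\Dcal^+(k)}(\widehat{M},-)$. The only non-formal input is the identification of the terms with torsor cohomology, which is where the hypothesis that $M$ is smooth enters. Applying $\RHom(\widehat{M},-)$ to the triangle and taking cohomology gives an exact sequence
\[
\Hom(\widehat{M},\bar{k}^\times[1]) \to \Hom(\widehat{M},\KD(\bar{X})) \to \Hom(\widehat{M},\UPic(\bar{X})) \to \Hom(\widehat{M},\bar{k}^\times[2]) \to \Hom(\widehat{M},\KD(\bar{X})[1]).
\]
Since $M$ is smooth, $\underline{\Hom}(\widehat{M},\bar{k}^\times)=M(\bar{k})$ and $\underline{\Ext}^i(\widehat{M},\bar{k}^\times)=0$ for $i>0$. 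Indeed, $\widehat{M}$ is finitely generated and $\bar{k}^\times$ is divisible with $n$-torsion of order $n$ for $n$ prime to the characteristic, and the torsion of $\widehat{M}$ has order invertible in $k$. Hence $\Ext^i_k(\widehat{M},\bar{k}^\times)=\H^i(k,M)$, and the outer terms become $\H^1(k,M)$ and $\H^2(k,M)$. Everything is functorial in $X$ and $M$ because the triangle is.

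The key step is to identify $\Hom_{\Dcal^+(k)}(\widehat{M},\KD(\bar{X})[i])$ with $\H^{i+1}(X,M)$ for $i=0,1$. I would use the \'etale sheaf version. Let $\pi\colon X\to\Spec k$ and write $\mathbf{G}_{m,X}$. On the small \'etale site the sequence $0\to\mathbf{G}_m\to j_*\mathbf{G}_{m,\eta}\to \bigoplus_{x\in X^{(1)}} i_{x*}\mathbf{Z}\to 0$ is exact since $X$ is smooth, hence locally factorial. Therefore $\mathbf{G}_{m,X}$ is quasi-isomorphic to the complex $[j_*\mathbf{G}_{m,\eta}\to \Div]$. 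This gives $\tau_{\le 1}R\pi_*\mathbf{G}_m \simeq \KD(\bar{X})$ in degrees $\le 1$: the complex $\KD$ computes $\bar{k}[X]^\times$ and $\Pic\bar{X}$, which are $R^0\pi_*$ and $R^1\pi_*$. The degree-$2$ part is where care is needed. By the local-to-global Ext spectral sequence, $\H^i(X,M)=\Ext^i_X(\pi^*\widehat{M},\mathbf{G}_m)=\Ext^i_k(\widehat{M},R\pi_*\mathbf{G}_m)$, again using that $\underline{\Ext}^{>0}_X(\pi^*\widehat{M},\mathbf{G}_m)=0$ by smoothness of $M$. The truncation map $\tau_{\le1}R\pi_*\mathbf{G}_m\to R\pi_*\mathbf{G}_m$ induces an isomorphism on $\Ext^1$ and an injection on $\Ext^2$, because its cone is concentrated in degrees $\ge 2$. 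This yields $\H^1(X,M)\cong\Hom(\widehat{M},\KD(\bar{X})[1])$; the statement's indexing absorbs the shift by putting $\KD$ in degrees $-1,0$. It also yields an injection $\Hom(\widehat{M},\KD(\bar{X})[2])\hookrightarrow \H^2(X,M)$ compatible with the maps from $\H^2(k,M)$, so exactness at $\H^2(k,M)$ is preserved after composing. Compatibility of the map $\H^i(k,M)\to\H^i(X,M)$ with $\bar{k}^\times[1]\to\KD(\bar X)$ follows from the fact that the triangle comes from $\mathbf{G}_{m,k}\to\pi_*\mathbf{G}_{m,X}$.

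I expect the main obstacle to be bookkeeping rather than mathematics: matching shifts and sign conventions, and checking that the Galois-module complex $\KD(\bar{X})$ really represents $\tau_{\le1}R\pi_*\mathbf{G}_m$ in $\Dcal^+(k)$ with the natural maps. This is exactly the content of \cite[Proposition~8.1]{HS2013Descent}, building on \cite{BvH2009UPic}, and I would cite it for that comparison. The map $\XType$ is then defined as the composite with $\KD(\bar{X})\to\UPic(\bar{X})$, as in \cite[Definition~8.2]{HS2013Descent}.
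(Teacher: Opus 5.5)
Your argument is correct and is the standard one behind \cite[Proposition~8.1]{HS2013Descent}, which is exactly what the paper cites in place of a proof. The only blemish is notational. You switch between placing $\KD(\bar{X})$ in degrees $-1,0$ and in degrees $0,1$. Also, for the second term you obtain only an injection $\Hom\bigl(\widehat{M},\KD(\bar{X})[1]\bigr)\hookrightarrow\H^2(X,M)$ (paper's indexing), not an identification; as you note, this injection suffices for exactness at $\H^2(k,M)$.
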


\noindent We define the \textit{extended type} of an $X$-torsor $Y\to X$ under $M$ as $\XType([Y\to X])$. Thus, the extended type of an $X$-torsor under $M$ classifies it up to Galois twist. On the other hand, the precise obstruction to the existence of torsors of with given extended type 
\[
    \lambda\colon \widehat{M} \longrightarrow \UPic(\bar{X})
\]
is the class $\ob(\lambda) \in \H^2(k,M)$. In Proposition~\ref{prop:QuasitrivialRealization}, we give an arithmetic interpretation of this realization problem as the existence of $k$-rational points on a torsor under a torus. \par
Let us compute $\UPic(\bar{T})$ for a $K$-torus $T$ since it is relevant for the a further argument:

\begin{exam} \label{exam:TorsorUnderTorus}
    If $X$ is a $k$-torsor under a torus $T$, then $\Pic(\bar{X}) = 0$ and hence 
    \[
        \UPic(\bar{X}) \simeq \bar{k}[X]^\times/\bar{k}^\times[1].
    \]
    A twisted version of Rosenlicht's lemma \cite[Proposition~3]{Rosenlicht1957Rationality} gives an exact sequence
    \begin{equation} \label{eq:RoselichtLemma}
        1 \longrightarrow \bar{k}^\times \longrightarrow \bar{k}[X]^\times \overset{f \mapsto \chi_f}{\longrightarrow} \widehat{T} \longrightarrow 0,
    \end{equation}
    Here, for each invertible function $f \in \bar{k}[X]^\times$, the associated character 
    \[
        \chi_f \colon T \to \Gbf_m
    \]
    is the unique one such that
    \[
        f(t \cdot x) = \chi_f(t) f(x)
    \]
    for all $t \in T(\bar{k})$ and $x \in X(\bar{k})$. In particular, one has
    \[
        \UPic(\bar{X}) \simeq \bar{k}[X]^\times/\bar{k}^\times[1] = \widehat{T}[1].
    \]
    On the other hand, it is worth nothing that under the isomorphism
    \[
        \Hom_{\Dcal^+(k)}\bigl(\widehat{T}, \UPic(\Spec(\bar{k}))\bigr) = \Ext^1_k\bigl(\widehat{T}, \bar{k}^\times\bigr) \cong \H^1(k,T),
    \]
    the element $\XType([X])$ corresponds to the class of extension \eqref{eq:RoselichtLemma} and $-[X] \in \H^1(k,T)$, see \cite[Proposition~1.4.2 and Proposition~1.4.3]{CTS1987Descente} for a proof.
\end{exam}

\noindent The following observation appeared in \cite[Section~3]{HW2020Galois}: suppose that we are given a torsor $Z \to X$ under $M$ of extended type $\lambda\coloneq\XType(Z)$. Take any exact sequence
\[
    1 \longrightarrow M \overset{\iota}{\longrightarrow} Q \overset{\pi}{\longrightarrow} T \longrightarrow 1,
\]
where $Q$ and $T$ are $k$-tori, and $Q$ is quasitrivial. Let
\[
    Y \coloneq Z \times^M Q
\]
which is an $X$-torsor under $Q$ of extended type $\XType(Y) = \lambda \circ \widehat{\iota}$. Since $Z \to X$ is a torsor under $M$, the $T$ torsor $Y / Q \to X$ is trivial. Choosing any isomorphism $Y/ Q \simeq X \times T$ induces a $\pi$-equivariant morphism 
\[
    f \colon Y \to Q/M = T.
\]
For any $t \in T(k)$, the fiber $\pi^{-1}(t)$ is a $k$-torsor under $M$ that represents the image of $t$ by the connecting homomorphism
\[
    T(k) \longrightarrow \H^1(k,M).
\]
It follows that $f^{-1}(t) = Y \times^M \pi^{-1}(t)$ is an $X$-torsor under $M$. More precisely, it is a Galois twist of $Y$ and hence has extended type $\lambda$ by Proposition~\ref{prop:ExtendedType}.
\medskip

\noindent
Now, if we are given an extended type $\lambda\colon \widehat{M} \to \UPic(\bar{X})$ without any torsor realizing it, we can still pursue the above idea as follows: embed $M$ into a quasitrivial torus $Q$ such that the pushforward extended type $\widehat{Q} \to \UPic(\bar{X})$ can be realized by a torsor 
\[
    Y \to X.
\]
Let $T\coloneq Q/M$ and let $\pi\colon Q \to T$ be the projection. Then, we can construct a $\pi$-equivariant morphism $f$ from $Y$ to a {\em $k$-torsor under $T$}. The existence of $k$-rational point of this torsor will then be equivalent to the existence of $X$-torsors under $M$ of extended type $\lambda$.
\medskip

\begin{prop} \label{prop:QuasitrivialRealization}
    For any morphism $\lambda\colon \widehat{M} \to \UPic(\bar{X})$ in the derived category of Galois modules, the following data exist:

    \begin{enumerate}
        \item \label{prop:QuasitrivialRealization:1} An embedding $\iota \colon M \hookrightarrow Q$ where $Q$ is a quasitrivial $k$-torus such that the restriction $\iota_*(\ob(\lambda)) \in \H^2(k,Q)$ vanishes,

        \item \label{prop:QuasitrivialRealization:2} a $k$-torsor $E$ under the torus $T\coloneq Q/M$ such that the connecting homomorphism
        \[
            \delta\colon \H^1(k,T) \longrightarrow \H^2(k,M)
        \]
        takes $[E]$ to $\ob(\lambda)$,
        
        \item \label{prop:QuasitrivialRealization:3} a torsor $p \colon Y \to X$ under $Q$ of extended type $\XType(p) = \lambda \circ \widehat{\iota}$, and

        \item \label{prop:QuasitrivialRealization:4} a $\pi$-equivariant morphism $f \colon Y \to E$, where $\pi\colon Q \to T$ is the projection. 
    \end{enumerate}
\end{prop}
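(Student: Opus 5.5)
We construct the four pieces of data in order, relying mainly on the functoriality of the exact sequence in Proposition~\ref{prop:ExtendedType} with respect to the group $M$.

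\textbf{Step 1: the embedding $\iota$.} Write $\ob(\lambda) \in \H^2(k,M)$. We choose an embedding $\iota\colon M \hookrightarrow Q$ into a quasitrivial torus $Q$ such that $\iota_*(\ob(\lambda)) = 0$. The class $\ob(\lambda)$ is killed by a finite separable extension $L/k$: $\H^2$ classes are torsion, and after splitting $M$ they reduce to Brauer classes of a split group, which are split by a finite extension. We then take a surjection of lattices $P \twoheadrightarrow \widehat{M}$ with $P$ permutation, for instance $P = \Zbf[\Gamma/\Gamma_L]\otimes \Zbf^r$ with $L$ also splitting $M$. Dually, this gives $M \hookrightarrow Q$ with $Q$ quasitrivial. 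Then $\iota_*$ factors as
\[
    \H^2(k,M) \to \H^2(k, \Res_{L/k}\iota_L) \to \cdots
\]
Concretely, we arrange $P = \Zbf[\Gamma/\Gamma_L]\otimes P_0$, so that $Q = \Res_{L/k}(Q_0)$ and $\iota_*$ factors through $\H^2(L,M)$ via Shapiro's lemma. Since $\ob(\lambda)|_L = 0$, we get $\iota_*\ob(\lambda) = 0$. This is the step needing care: producing one embedding that is simultaneously injective on groups and kills the class. If it is awkward, a cleaner alternative is to take $Q = Q_1 \times \Res_{L/k}(M_L\hookrightarrow Q_2)$ and use the diagonal map, replacing $Q_1$ by $1$. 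Equivalently, we use the standard injection $M \hookrightarrow \Res_{L/k}M_L$, for which $\H^2(k,M)\to \H^2(L,M)$ is restriction, followed by an embedding of $M_L$ into a split torus, which is possible since $L$ splits $M$. The composite is quasitrivial.

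\textbf{Step 2: the torsor $E$.} We set $T = Q/M$, which is a torus as a quotient of a torus. The long exact sequence
\[
    \H^1(k,T) \xrightarrow{\delta} \H^2(k,M) \xrightarrow{\iota_*} \H^2(k,Q)
\]
together with $\iota_*\ob(\lambda)=0$ gives a class $[E]\in \H^1(k,T)$ with $\delta[E] = \ob(\lambda)$.

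\textbf{Step 3: the torsor $Y$.} By functoriality of the sequence in Proposition~\ref{prop:ExtendedType} in $M$, we have $\ob(\lambda\circ\widehat{\iota}) = \iota_*\ob(\lambda) = 0$. Exactness then yields a $Q$-torsor $p\colon Y\to X$ with $\XType(p)=\lambda\circ\widehat\iota$.

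\textbf{Step 4: the morphism $f$.} Consider the contracted product $Y/M = Y\times^Q T$, an $X$-torsor under $T$ of extended type $\lambda\circ\widehat\iota\circ\widehat\pi$. Since $\widehat{\iota}\circ\widehat{\pi}=0$, this type vanishes, so by Proposition~\ref{prop:ExtendedType} we have $Y/M \cong X\times_k E'$ for some $k$-torsor $E'$ under $T$ (the image of $\H^1(k,T)$). A $\pi$-equivariant map $Y\to Y/M \to E'$ then follows.

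The remaining point is to arrange $E'=E$, and this is the key compatibility. We have the freedom to twist $Y$ by $\H^1(k,Q)$, which changes $E'$ by the image of $\H^1(k,Q)\to\H^1(k,T)$. Since $\ker\delta = \Img(\H^1(k,Q)\to \H^1(k,T))$, it suffices to show $\delta[E'] = \ob(\lambda)$. We expect this to follow from a compatibility of $\ob$ with the connecting maps of $1\to M\to Q\to T\to 1$, applied to the triangle for $\KD$ and $\UPic$. Via Example~\ref{exam:TorsorUnderTorus}, this amounts to a sign and diagram-chase verification in $\Dcal^+(k)$ relating $\ob(\lambda)$ to the extended type of $Y/M$. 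We expect this derived-category compatibility, including the sign, to be the main obstacle. If the sign comes out negative, we replace $E$ by the torsor of opposite class.
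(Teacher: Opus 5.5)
\textbf{Steps 1--3 are fine.} The embedding $M\hookrightarrow \Res_{L/k}M_L\hookrightarrow \Res_{L/k}\Gbf_{\m,L}^r$, with $L$ splitting $M$ and killing $\ob(\lambda)$, works by Shapiro's lemma. This is essentially the lemma of Borovoi--Colliot-Th\'el\`ene--Skorobogatov that the paper cites. Steps~2 and~3 are correct as written.

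\textbf{The gap is Step~4, which is the real content of the proposition.} You only obtain a $\pi$-equivariant map $Y\to E'$ to \emph{some} $k$-torsor $E'$ under $T$. The identification $E'\cong E$, equivalently $\pi_*[Y]=p^*[E]$ in $\H^1(X,T)$, is left as an expected diagram chase. Your surrounding remarks do not close this:
\begin{itemize}
\item Twisting $Y$ gives no freedom, because $\H^1(k,Q)=0$; your $Y$ is in fact the unique $Q$-torsor of extended type $\lambda\circ\widehat{\iota}$.
\item Replacing $E$ by the opposite torsor would violate condition~\ref{prop:QuasitrivialRealization:2} unless $2\ob(\lambda)=0$.
\item $E'$ is not determined by $Y$. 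It is only defined modulo $\Ker\bigl(\H^1(k,T)\to\H^1(X,T)\bigr)$, which can be nonzero when $X$ has nonconstant units. Any verification therefore has to single out the correct $E'$.
\end{itemize}

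\textbf{The missing idea is the morphism-of-triangles axiom (TR3), which makes the compatibility automatic.} Set up the notation as follows:
\begin{itemize}
\item $\varepsilon\colon \widehat{T}\to\bar{k}^\times[1]$ represents $[E]\in\Ext^1_k(\widehat{T},\bar{k}^\times)$;
\item $j\colon\bar{k}^\times[1]\to\KD(\bar{X})$ and $\partial\colon\UPic(\bar{X})\to\bar{k}^\times[2]$ are the maps in \eqref{eq:TriangleKDAndUPic};
\item $d\colon\widehat{M}\to\widehat{T}[1]$ is the connecting map of $\widehat{T}\to\widehat{Q}\to\widehat{M}\to\widehat{T}[1]$.
\end{itemize}
Since $\delta$ is induced by $d$ and $\ob(\lambda)=\partial\circ\lambda$, the equality $\delta[E]=\ob(\lambda)$ says exactly that $\varepsilon[1]\circ d=\partial\circ\lambda$. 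TR3, applied to the rotated triangles, then gives $\eta\colon\widehat{Q}\to\KD(\bar{X})$ such that $(\varepsilon,\eta,\lambda)$ is a morphism of triangles. Let $Y$ be the $Q$-torsor corresponding to $\eta$.
\begin{itemize}
\item The middle square gives $\XType(Y)=\lambda\circ\widehat{\iota}$.
\item The left square, $\eta\circ\widehat{\pi}=j\circ\varepsilon$, is precisely $\pi_*[Y]=p^*[E]$, i.e.\ $Y\times^{Q}T\cong X\times E$. This yields $f$ with no separate sign check.
\end{itemize}

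If you prefer to keep your $Y$ from Step~3, let $\eta'$ represent $[Y]$ and apply TR3 to the pair $(\eta',\lambda)$ instead. This produces $\varepsilon'$ with $\eta'\circ\widehat{\pi}=j\circ\varepsilon'$ and $\varepsilon'[1]\circ d=\partial\circ\lambda$. Hence $Y/M\cong X\times E'$ with $\delta[E']=\ob(\lambda)$, and $E'\cong E$ follows from the injectivity of $\delta$.
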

\begin{proof}
    By \cite[Lemma~3.7]{BCTS08ElementaryObstruction}, there is an embedding $\iota\colon M \hookrightarrow Q$ satisfying~\ref{prop:QuasitrivialRealization:1}.\par

    Since $\iota_*(\alpha) = 0$, the long exact sequence associated with
    \[
        1 \longrightarrow M \overset{\iota}{\longrightarrow} Q \overset{\pi}{\longrightarrow} T \longrightarrow 1
    \]
    ensures the existence of a $k$-torsor $E$ under $T$ such that $\delta([E]) = \ob(\lambda)$ as in~\ref{prop:QuasitrivialRealization:2}. \par
    
    Under the isomorphism $\H^1(k,T) \cong \Ext^1_k\bigl(\widehat{T},\bar{k}^\times\bigr)$, the class of the torsor $E$ corresponds to a homomorphism
    \[
        \varepsilon\colon \widehat{T} \longrightarrow \bar{k}^\times[1]
    \]
    in $\Dcal^+(k)$ that fits in the right square of the diagram of distinguished triangles
    \begin{equation} \label{eq:QuasitrivialRealization}
    \xymatrix{
        \widehat{T} \ar[r]^{\widehat{\pi}} \ar[d]^{\varepsilon} & \widehat{Q} \ar[r]^{\widehat{\iota}} \ar@{-->}[d]^{\eta} & \widehat{M} \ar[r]^{d} \ar[d]^{\lambda} & \widehat{T}[1] \ar[d]^{\varepsilon[1]}\\
        \bar{k}^\times[1] \ar[r] & \KD(\bar{X}) \ar[r] & \UPic(\bar{X}) \ar[r] & \bar{k}^\times[2].
    }
    \end{equation}
    Indeed, the connecting homomorphism $\delta$ is induced by the morphism $T \to M[1]$ that is dual to $d$. By the axioms of triangulated categories, there exists a dashed morphism
    \[
        \eta\colon \widehat{Q} \longrightarrow \KD(\bar{X})
    \]
    that makes \eqref{eq:QuasitrivialRealization} a morphism of distinguished triangles. It follows from Proposition~\ref{prop:ExtendedType} that $\eta$ corresponds to a torsor $p \colon Y \to X$ under $Q$. Commutativity of the middle square ensures condition~\ref{prop:QuasitrivialRealization:3}, namely $\XType(p) = \lambda \circ \hat{\iota}$.
    \medskip

    Lastly, commutativity of the left square in \eqref{eq:QuasitrivialRealization} implies the classes
    \[
        [Y] \in \H^1(X,Q) \qquad \text{and} \qquad [E] \in \H^1(k,T)
    \]
    have the same image in $\H^1(X,T)$ or, equivalently, there exists a $T$-equivariant isomorphism
    \[
        Y/M = Y \times^Q T \overset{\cong}{\longrightarrow} X \times E
    \]
    over $X$. Composing it with the projection $Y \to Y/M$ yields a torsor
    \[
        Y \longrightarrow X \times E    
    \]
    under $M$. Projecting onto $E$ gives a morphism $f \colon Y \to E$ that fulfills condition~\ref{prop:QuasitrivialRealization:4}.
\end{proof}

\begin{defn} \label{defn:QuasitrivialRealization}
    Let $\lambda \colon \widehat{M} \to \UPic(\bar{X})$ be a morphism in $\Dcal^+(k)$. A {\em quasitrivial realization} of $\lambda$ is any quintuple $(Q, E, Y, f,p)$ satisfying the conditions of Proposition~\ref{prop:QuasitrivialRealization}.
\end{defn}

\begin{prop} \label{prop:TorsorOfGivenXType}
    Let $X$ be a smooth geometrically integral $k$-variety, $M$ a smooth $k$-group of multiplicative type, and $\lambda \in \Hom_{\Dcal^+(k)}\bigl(\widehat{M},\UPic(\bar{X})\bigr)$. Let $(Q, E, Y, f,p)$ be a quasitrivial realization of $\lambda$. Then, there exists a $X$-torsor of extended type $\lambda$ if and only if $E(k) \neq \varnothing$.
\end{prop}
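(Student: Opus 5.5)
The plan is to route both implications through Proposition~\ref{prop:ExtendedType}: an $X$-torsor of extended type $\lambda$ exists if and only if $\ob(\lambda) = 0$ in $\H^2(k,M)$. By condition~\ref{prop:QuasitrivialRealization:2}, $\ob(\lambda) = \delta([E])$, where $\delta$ is the connecting map of
\[
    1 \longrightarrow M \overset{\iota}{\longrightarrow} Q \overset{\pi}{\longrightarrow} T \longrightarrow 1 .
\]
So the statement reduces to showing $\delta([E]) = 0 \iff [E] = 0$ in $\H^1(k,T)$.

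For the implication ``$E(k)\neq\varnothing \Rightarrow$ a torsor exists'', I argue directly, since this is where the construction is visible. Take $e \in E(k)$. Using condition~\ref{prop:QuasitrivialRealization:4}, I form the fiber $Z \coloneq f^{-1}(e) \subseteq Y$. Since $f$ is $\pi$-equivariant and $Y \to X \times E$ is an $M$-torsor (by construction in the proof of Proposition~\ref{prop:QuasitrivialRealization}), $Z \to X \times \{e\} \cong X$ is an $X$-torsor under $M$. Moreover $Z \times^M Q \cong Y$, because the natural $Q$-equivariant map $Z \times^M Q \to Y$ is a morphism of $Q$-torsors over $X$. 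Hence functoriality of $\XType$ gives $\XType(Z)\circ\widehat{\iota} = \XType(Y) = \lambda\circ\widehat{\iota}$. To conclude $\XType(Z) = \lambda$, I would instead use exactness: $[E]=0$ gives $\ob(\lambda)=\delta([E])=0$, so some torsor of type $\lambda$ exists by Proposition~\ref{prop:ExtendedType}. The fiber $Z$ is just an explicit witness.

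For the converse, suppose a torsor of type $\lambda$ exists. Then $\ob(\lambda)=0$, so $\delta([E])=0$, and exactness of
\[
    Q(k)\longrightarrow T(k)\longrightarrow \H^1(k,M)\longrightarrow \H^1(k,Q)\longrightarrow \H^1(k,T)\overset{\delta}{\longrightarrow}\H^2(k,M)
\]
gives $[E] \in \Img(\H^1(k,Q)\to \H^1(k,T))$. Since $Q$ is quasitrivial, Hilbert 90 together with Shapiro's lemma gives $\H^1(k,Q)=0$. Hence $[E]=0$, and $E(k)\neq\varnothing$.

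The main obstacle is the vanishing $\H^1(k,Q)=0$. This is where quasitriviality is essential, and it is what forces the argument through the exact sequence rather than a direct construction. A secondary bookkeeping point is sign conventions: Example~\ref{exam:TorsorUnderTorus} introduces a minus sign relating $\XType([E])$ to $[E]$. This is harmless here because only the vanishing of $[E]$ matters.
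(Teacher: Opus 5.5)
Your argument is correct and is essentially the paper's proof: condition~2 gives $\ob(\lambda)=\delta([E])$, and $\delta$ is injective because $\H^1(k,Q)=0$ for quasitrivial $Q$, so the existence of a torsor of type $\lambda$ (equivalently $\ob(\lambda)=0$) is equivalent to $[E]=0$. The detour through the fibre $f^{-1}(e)$ is unnecessary, and calling it an explicit witness of type $\lambda$ is not justified: you only showed $\XType(f^{-1}(e))\circ\widehat{\iota}=\lambda\circ\widehat{\iota}$, and the paper's subsequent remark explains that the trivialization $Y/M\cong X\times E$ must first be adjusted by a translation. Nothing in your proof depends on this, however.
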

\begin{proof}
    Condition~\ref{prop:QuasitrivialRealization:2} of  Proposition~\ref{prop:QuasitrivialRealization} ensures $\delta([E]) = \ob(\lambda)$. On the other hand, the connecting homomorphism $\delta$ is injective because $\H^1(k,Q) = 0$. Then, $\ob(\lambda)$ vanishes if and only if $[E] = 0$. In other words, torsors of extended type $\lambda$ exist if and only if $E(k) \neq \varnothing$.
\end{proof}

\begin{remk} 
    The chosen isomorphism $Y/M \cong X \times E$ of $X$-torsor under $T$ is by no mean canonical. By careful computations with derived categories, we can actually show the following. There exists a morphism $\psi\colon X \to T$ (that is, an $X$-point of $T$) such that, after modifying the isomorphism $Y/M \cong X \times E$ by the translation automorphism of $X \times E$ induced by $\psi$, the morphism $f$ yields a family of torsors of extended type $\lambda$ parametrized by $E$. In particular, for any $k$-rational point $e \in E(k)$, the fibre $f^{-1}(e)$ is an $X$-torsor under $M$ with $\XType(f^{-1}(e)) = \lambda$.
\end{remk}


\subsection{Realization over geometric global fields}

For the rest of the section $K$ denotes a geometric global field as defined in Section~\ref{subsection generalities fields}.
        

\begin{defn}
    Let $X$ be a geometrically integral smooth $K$-variety, $\lambda: \widehat{M} \to \UPic(\bar{X})$ a morphism in $\Dcal^+(K)$, and $(Q,E,Y,f,p)$ a quasitrivial realization of $\lambda$. We define the group $\Be_{\lambda}(Y) \subseteq \Br_1(Y)$ as 
    \[
    \Be_{\lambda}(Y) \coloneq f^*(\Be(E)).
    \]
    We also define
    \[
        X(\Abf_K)^{p,\Be_{\lambda}} \coloneq p(Y(\Abf_K)^{\Be_{\lambda}}).
    \]
\end{defn}

\begin{prop}\label{prop qt Be_lamba existence of type lambda}
    Let $K$ be a geometric global field, $X$ a smooth geometrically integral $K$-variety, $\lambda \in \Hom_{\Dcal^+(K)}(\widehat{M}, \UPic(\bar{X}))$, and $(Q,E,Y,f,p)$ a quasitrivial realization. Assume that $X(\Abf_K) \neq \varnothing$. Then, $X$-torsors of extended type $\lambda$ exist if and only if we have $X(\Abf_K)^{p,\Be_\lambda} \neq \varnothing$.
\end{prop}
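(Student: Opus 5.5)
By Proposition~\ref{prop:TorsorOfGivenXType}, $X$-torsors of extended type $\lambda$ exist if and only if $E(K)\neq\varnothing$. The plan is therefore to show that, under the hypothesis $X(\Abf_K)\neq\varnothing$,
\[
E(K)\neq\varnothing \iff X(\Abf_K)^{p,\Be_\lambda}\neq\varnothing .
\]
The key arithmetic input is the local--global principle for torsors under tori over geometric global fields, \cite[Corollaire~8.3]{CTH2015DualCt} and \cite[Théorème~6.2]{Izquierdo2019Dualite}: for a $K$-torsor $E$ under a torus $T$, $E(\Abf_K)^{\Be(E)}\neq\varnothing$ implies $E(K)\neq\varnothing$.

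\emph{Forward direction.} Suppose $E(K)\neq\varnothing$. Pick $e\in E(K)$ and consider the fibre $f^{-1}(e)\subseteq Y$. It is stable under $M$, and by the discussion preceding Proposition~\ref{prop:QuasitrivialRealization} the restriction $p\colon f^{-1}(e)\to X$ is an $X$-torsor under $M$; denote it $Z$. Since $\H^1(K,Q)=0$ and $Q$ is quasitrivial, the map $Z\times^M Q\to Y$ identifies $Y$ with the pushforward of $Z$, so $p$ is surjective on $\bar K$-points of $Z$.

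To produce an adelic point, take $x_\Abf\in X(\Abf_K)$. For each $v$, the fibre $Z_{x_v}$ is a $K_v$-torsor under $M$. Its class lies in the kernel of $\H^1(K_v,M)\to\H^1(K_v,Q)=0$, hence in the image of $T(K_v)$. Twisting $Z$ by elements of $\H^1(K,M)$ coming from $T(K)$, equivalently changing $e$ in $E(K)$, does not suffice locally. Instead I would argue directly on $Y$. The fibre $Y_{x_v}$ is a $K_v$-torsor under the quasitrivial torus $Q$, hence has a $K_v$-point $y_v$ by Hilbert~90. Integrality at almost all places comes from smooth models. Set $y_\Abf=(y_v)$. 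Then $f(y_\Abf)\in E(\Abf_K)$. Because $E$ has a $K$-point, $E\simeq T$ and $\Be(E)=\Be(T)$, which is orthogonal to all of $T(\Abf_K)$ modulo constants. Indeed, the pairing with $\Be$ is trivial on the adelic points of a trivial torsor; this is the known duality statement that $\Be(T)$ pairs trivially with $T(\Abf_K)$, since $\Be(T)\cong\Sha^2(K,\widehat T)$ pairs with $\Acal(T)$ through its image, and Proposition~\ref{prop properties adelic spaces}\ref{prop properties adelic spaces: Poitou-Tate} shows this pairing factors through the zero map. By functoriality of the Brauer--Manin pairing, $y_\Abf\in Y(\Abf_K)^{\Be_\lambda}$, so $x_\Abf\in X(\Abf_K)^{p,\Be_\lambda}$.

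\emph{Converse direction.} Suppose $y_\Abf\in Y(\Abf_K)^{\Be_\lambda}$. By functoriality, $f(y_\Abf)\in E(\Abf_K)^{\Be(E)}$, since $\langle f(y_\Abf),\beta\rangle=\langle y_\Abf,f^*\beta\rangle=0$. The local--global principle for torsors under tori cited above then gives $E(K)\neq\varnothing$.

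\emph{Main obstacle.} The delicate step is the forward direction, namely checking that $\Be(T)$ is orthogonal to all of $T(\Abf_K)$ for geometric global fields. This is precisely where the duality results of Colliot-Thélène--Harari and Izquierdo must be invoked. The converse direction is formal once those results are cited. I would first try to deduce the orthogonality from the exact sequence of Proposition~\ref{prop properties adelic spaces}\ref{prop properties adelic spaces: Poitou-Tate}, by identifying $\Be(T)$ with $\Sha^2(K,\widehat T)$.
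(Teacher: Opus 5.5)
Your proposal is correct and follows essentially the same route as the paper: you reduce to $E(K)\neq\varnothing$ via Proposition~\ref{prop:TorsorOfGivenXType}, obtain the converse from functoriality of the Brauer--Manin pairing together with the local--global principle for torsors under tori, and obtain the forward direction by lifting $x_\Abf$ to $Y(\Abf_K)$ (possible since $Q$ is quasitrivial) and pushing forward to $E$. The step you single out as the main obstacle is in fact immediate, which is how the paper treats it: every $\alpha\in\Be(E)$ is locally constant, so $\mathrm{BM}(\cdot,\alpha)$ is constant on $E(\Abf_K)$ and hence vanishes identically once $E(K)\neq\varnothing$ (by reciprocity at a rational point), with no need for the Poitou--Tate sequence or for $\Sha^2$.
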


\begin{proof}
Proposition~\ref{prop:TorsorOfGivenXType} ensures that there exists a torsors of type $\lambda$ if and only if  we have $E(K) \neq \varnothing$. On the other hand, the Brauer-Manin obstruction with respect to $\Be$ is the only obstruction to the local-global principle for $E$, thanks to \cite[Corollaire~8.3]{CTH2015DualCt} and \cite[Théorème~5.2]{Izquierdo2019Dualite}. Then, the condition $E(K) \neq \varnothing$ is equivalent to $E(\Abf_K)^{\Be(E)} \neq \varnothing$. \par
Consider $x_{\Abf} \in X(\Abf_K)$ and $\alpha \in \Be(E)$. Note that since $Q$ is a quasitrivial torus, there exists $y_{\Abf} \in Y(\Abf_K)$ lifting $x_{\Abf}$. By functoriality of the Brauer-Manin pairing, we have
\begin{equation}\label{eq compatibility  BM}
    \operatorname{BM}(y_{\Abf}, f^*\alpha) = \operatorname{BM}(f(y_{\Abf}) , \alpha).
\end{equation}
Note that for every $e_{\Abf}, e'_{\Abf} \in E(\Abf_K)$, we have $\operatorname{BM}(e_{\Abf} , \alpha) = \operatorname{BM}(e_{\Abf}' , \alpha)$ because $\alpha$ belongs to $\Be(E)$. Then, there exists a point $e_{\Abf} \in E(\Abf_K)$ orthogonal to $\Be(E)$ if and only if we have $Y(\Abf_K)^{\Be_{\lambda}(Y)} \neq \varnothing$.
\end{proof}

\noindent We obtain the following analogue of \cite[Corollary~8.17]{HS2013Descent} in the setting of geometric global~fields.

\begin{coro} \label{coro:QtBeNonempty}
    Let $K$ be a geometric global field and $X$ a smooth geometrically integral variety over $K$. If $X(\Abf_K)^{\qt,\Be} \neq \varnothing$, then there exists a $X$-torsor of any given extended type, that is, the~homomorphism
    \[
        \XType\colon \H^1(X,M) \longrightarrow \Hom_{\Dcal^+(K)}\bigl(\widehat{M},\UPic(\bar{X})\bigr)
    \]
    is surjective for any $K$-group of multiplicative type $M$.
\end{coro}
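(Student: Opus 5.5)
The plan is to reduce to Proposition~\ref{prop qt Be_lamba existence of type lambda} via a quasitrivial realization. Fix a $K$-group of multiplicative type $M$ and an extended type $\lambda\colon \widehat{M}\to\UPic(\bar X)$. Over $K$ (characteristic zero) every group of multiplicative type is smooth, so Proposition~\ref{prop:QuasitrivialRealization} applies and gives a quasitrivial realization $(Q,E,Y,f,p)$ of $\lambda$. By Proposition~\ref{prop:ExtendedType}, surjectivity of $\XType$ at $\lambda$ is exactly the existence of an $X$-torsor of extended type $\lambda$. Since $X(\Abf_K)^{\qt,\Be}\subseteq X(\Abf_K)$, the hypothesis gives $X(\Abf_K)\neq\varnothing$. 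By Proposition~\ref{prop qt Be_lamba existence of type lambda}, it therefore suffices to show $X(\Abf_K)^{p,\Be_\lambda}\neq\varnothing$, i.e.\ $Y(\Abf_K)^{f^*\Be(E)}\neq\varnothing$.

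The key comparison is $\Be_\lambda(Y)=f^*(\Be(E))\subseteq \Be(Y)$. This uses functoriality of the algebraic Brauer group and of $\Br_{\mathrm{al}}$ under base change to each $K_v$. An element $\alpha\in\Be(E)$ lies in $\Br_1(E)$, so $f^*\alpha\in\Br_1(Y)$. Its image in $\Br_{\mathrm{al}}(Y_v)$ is the pullback of the image of $\alpha$ in $\Br_{\mathrm{al}}(E_v)$, which is $0$. Hence $f^*\alpha$ dies in $\Br_{\mathrm{al}}(Y_v)$ for all $v$, so $f^*\alpha\in\Be(Y)$. Consequently $Y(\Abf_K)^{\Be(Y)}\subseteq Y(\Abf_K)^{\Be_\lambda(Y)}$.

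Now take $x_\Abf\in X(\Abf_K)^{\qt,\Be}$. Since $p\colon Y\to X$ is a torsor under the quasitrivial torus $Q$, the definition of $X(\Abf_K)^{\qt,\Be}$ gives $x_\Abf\in p(Y(\Abf_K)^{\Be(Y)})$. Combined with the inclusion above, this yields
\[
x_\Abf\in p\bigl(Y(\Abf_K)^{\Be_\lambda}\bigr)=X(\Abf_K)^{p,\Be_\lambda},
\]
so that set is nonempty, and Proposition~\ref{prop qt Be_lamba existence of type lambda} concludes.

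The only delicate point is the inclusion $f^*\Be(E)\subseteq\Be(Y)$. This requires checking that the map $\Br_1(E)\to\Br_{\mathrm{al}}(E_v)$ is compatible with $f^*$, which holds because $f$ is a $K$-morphism. Everything else is formal from the earlier propositions.
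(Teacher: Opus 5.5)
Your proposal is correct and follows the paper's proof, which combines the existence of a quasitrivial realization with Proposition~\ref{prop qt Be_lamba existence of type lambda}. You also make explicit the inclusion $f^*\Be(E)\subseteq\Be(Y)$, and hence $X(\Abf_K)^{\qt,\Be}\subseteq X(\Abf_K)^{p,\Be_\lambda}$, which the paper's one-line proof leaves implicit.
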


\begin{proof}
    Let $\lambda$ be an element of $\Hom_{\Dcal^+(K)}\bigl(\widehat{M},\UPic(\bar{X})\bigr)$. The proposition follows directly from the existence of a quasitrivial realization and Proposition~\ref{prop qt Be_lamba existence of type lambda}.
\end{proof}


\section{Idempotency of toric descent} \label{sec:TorEqualTorTor}

The main objective of this section is to prove the following result:
\begin{thm} \label{thm:TorEqualTorTor}
    Let $X$ be a smooth geometrically integral variety defined over a geometric global field $K$ such that $\Pic(\bar{X})$ is finitely generated. Then
    \[
        X(\Abf_K)^{\tor,\tor} = X(\Abf_K)^{\qt,\Br_1} = X(\Abf_K)^{\mult} = X(\Abf_K)^{\tor}.
    \]
\end{thm}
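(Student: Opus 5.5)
We will prove the chain of inclusions
\[
    X(\Abf_K)^{\tor,\tor} \subseteq X(\Abf_K)^{\tor} \subseteq X(\Abf_K)^{\qt,\Br_1} \subseteq X(\Abf_K)^{\mult} \subseteq X(\Abf_K)^{\tor} \subseteq X(\Abf_K)^{\tor,\tor}.
\]
Several of these are formal. The inclusion $X(\Abf_K)^{\tor,\tor}\subseteq X(\Abf_K)^{\tor}$ holds because $Y^\sigma(\Abf_K)^{\tor}\subseteq Y^\sigma(\Abf_K)$. The inclusion $X(\Abf_K)^{\mult}\subseteq X(\Abf_K)^{\tor}$ holds because tori are of multiplicative type.

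For $X(\Abf_K)^{\tor}\subseteq X(\Abf_K)^{\qt,\Br_1}$, fix $x_\Abf\in X(\Abf_K)^{\tor}$ and a torsor $p\colon Y\to X$ under a quasitrivial $Q$.
\begin{itemize}
    \item Since $\H^1(K,Q)=0$, the descent condition gives some $y_\Abf\in Y(\Abf_K)$ over $x_\Abf$.
    \item We want to adjust $y_\Abf$ within its fibre so that it becomes orthogonal to $\Br_1(Y)$. Since $\Pic(\bar Y)$ is finitely generated (it is a quotient of $\Pic(\bar X)$ modulo the image of $\widehat Q$), we expect $\Br_1(Y)/\Br_0(Y)$ to be controlled by $\H^1(K,\UPic(\bar Y))$.
    \item Each element of $\Br_1(Y)$ should come from torsors under tori on $Y$, hence under tori on $X$, by pushing forward along $Y\to X$ using the extended-type formalism of Proposition~\ref{prop:ExtendedType}.
    \item The key input is the Poitou--Tate-type sequence of Proposition~\ref{prop properties adelic spaces}(\ref{prop properties adelic spaces: Poitou-Tate}). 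The Brauer--Manin pairing of $Q(\Abf_K)\cdot y_\Abf$ against $\Br_1(Y)$ is computed through $\H^2(K,\widehat Q)$-type pairings. Toric descent for $x_\Abf$ should then give a character $\Br_1(Y)/\Br_0\to\Qbf/\Zbf$ that can be killed by translating by an element of $Q(\Abf_K)$.
\end{itemize}

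For $X(\Abf_K)^{\qt,\Br_1}\subseteq X(\Abf_K)^{\mult}$, take a torsor $Z\to X$ under $M$ and take its quasitrivial realization $(Q,E,Y,f,p)$ with extended type $\lambda=\XType(Z)$. Here $E$ is trivial and the fibres $f^{-1}(t)$ are twists of $Z$, as in the observation before Proposition~\ref{prop:QuasitrivialRealization}. Let $x_\Abf$ lie in $X(\Abf_K)^{\qt,\Br_1}$, and choose $y_\Abf\in Y(\Abf_K)^{\Br_1}$ over it. Then $f(y_\Abf)\in T(\Abf_K)^{\Br_1(T)}$, and $T(\Abf_K)^{\Br_1(T)}=T(K)$ by \cite[Corollaire~8.3]{CTH2015DualCt} and \cite{Izquierdo2019Dualite}, where one uses that $\Sha$ and divisible-part issues are controlled. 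So $f(y_\Abf)=t\in T(K)$, hence $y_\Abf$ lies in the twist $f^{-1}(t)(\Abf_K)$ and $x_\Abf\in X(\Abf_K)^{Z}$.

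The hard step is $X(\Abf_K)^{\tor}\subseteq X(\Abf_K)^{\tor,\tor}$. Given $x_\Abf\in X(\Abf_K)^{\tor}$ and a torsor $g\colon W\to X$ under a torus $S$, we must find a twist $W^\sigma$ and a lift lying in $W^\sigma(\Abf_K)^{\tor}$. The plan is as follows.
\begin{enumerate}
    \item Lift $x_\Abf$ to some $w_\Abf$ on a twist.
    \item Using the equality $\tor=\qt,\Br_1$ applied to $W^\sigma$, reduce to showing that some $S(\Abf_K)$-translate of $w_\Abf$ is orthogonal to $\Br_1$ after pulling back to quasitrivial torsors. Finite generation of $\Pic(\bar W)$ is needed here and follows from the Rosenlicht-type sequence.
    \item The obstruction to doing so lives in a quotient of $\Acal(S)$. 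Torsors over $W$ under a torus $R$ yield, via \cite[Theorem~A.1]{ILA2021LocalGlobal}-type extensions, tori $1\to R\to S'\to S\to1$.
    \item The condition $x_\Abf\in X(\Abf_K)^{\tor}$ kills the image in $\Acal(T)/\Acal(T)_{\div}$. Lemma~\ref{lemm:TorEqualTorTor:KerRhoFiniteExponent} then says the remaining kernel has finite exponent, uniformly.
    \item Combine this with Proposition~\ref{prop properties adelic spaces}(\ref{prop properties adelic spaces: div = infity div}, \ref{prop properties adelic spaces: finite exponent}) and a compactness argument over the inverse system of torsors, which should be countable since $\Pic(\bar X)$ is finitely generated, to produce a single translate that works for all $R$.
\end{enumerate}
Making this uniformity and compactness argument precise is where we expect the main difficulty.
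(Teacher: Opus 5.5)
Your two formal inclusions ($\tor,\tor\subseteq\tor$ and $\mult\subseteq\tor$) are fine. There is, however, a genuine gap already at $X(\Abf_K)^{\tor}\subseteq X(\Abf_K)^{\qt,\Br_1}$.

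Carried out, your sketch delivers exactly condition~\ref{prop:TorEqualMult:4} of Proposition~\ref{prop:TorEqualMult}. The functional $\alpha\mapsto\sum_v\alpha(y_v)$ on $\Br_1(Y)$ extends to some $\varepsilon'$ on $\H^2(K,\widehat Q)$. But the sequence of Proposition~\ref{prop properties adelic spaces}\ref{prop properties adelic spaces: Poitou-Tate} realizes $\varepsilon'$ only by an element of the completion $Q(\Abf_K)_\wedge$. Concretely, you get a compatible family $(q^{(n)}_\Abf)_n$ in which $q^{(n)}_\Abf\cdot y_\Abf$ is orthogonal only to $\varphi^{-1}({}_n\H^2(K,\widehat Q))$. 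Since $\varphi(\Br_1(Y))$ need not have bounded exponent, this gives no single translate orthogonal to all of $\Br_1(Y)$, and producing one is the real difficulty. The paper never proves this inclusion directly. It follows from the hard inclusion $X(\Abf_K)^{\tor}\subseteq X(\Abf_K)^{\tor,\tor}$ together with $X(\Abf_K)^{\tor,\tor}\subseteq X(\Abf_K)^{\qt,\Br_1}$: quasitrivial torsors have no nontrivial twists, and $Y(\Abf_K)^{\tor}\subseteq Y(\Abf_K)^{\Br_1}$ by \cite[Theorem~6.4]{ILA2021LocalGlobal}. Your hard step invokes ``$\tor=\qt,\Br_1$ for $W^\sigma$'', so it rests on this unproved step.

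Separately, in your step $\qt,\Br_1\subseteq\mult$ the equality $T(\Abf_K)^{\Br_1(T)}=T(K)$ is false. That set contains $T(\Abf_K)^{\tor}$, which is the preimage of $\Acal(T)_{\div}$ (Proposition~\ref{prop:TorEqualTorTorForTori}) and is strictly larger than $T(K)$ in general; already for $\Gbf_{\m}$ the divisible subgroup $U^0$ of Lemma~\ref{lemm: divisible subgroup cofinite quotient} is nonzero. The results you cite give a local--global principle for torsors under tori, not this equality. This step is repairable as in the paper. Poitou--Tate gives $t\in T(K)$ with $f(y_\Abf)\equiv t\pmod{m\,T(\Abf_K)}$, where $m\cdot\H^1(K,M)=0$. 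Since $m\,T(K_v)\subseteq\pi(Q(K_v))$, you can move each $y_v$ by $Q(K_v)$ into the fibre $f^{-1}(t)$, and Lemma~\ref{lemma local lift implies adelic lift} then makes the lift adelic.

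For $X(\Abf_K)^{\tor}\subseteq X(\Abf_K)^{\tor,\tor}$ the key ideas are missing. For an arbitrary $W$ there is no adelic space in which ``the obstruction lives'' until toric descent on $W$ has been transferred to a torus. The device that does this, a units morphism, works only when the geometric Picard group vanishes (Proposition~\ref{prop:UnitTorsorPreserveTor}). The torus $T$ of your step~4 is never introduced in your setup, and \cite[Theorem~A.1]{ILA2021LocalGlobal} is used in the paper only for torsors over tori. Nor is there compactness to exploit: the adelic groups here are not locally compact (the $K_v$ are not), and a compatible family of approximate translates again only gives an element of a completion.

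The paper sidesteps uniformity altogether:
\begin{enumerate}
\item Because $X(\Abf_K)^{\tor}\neq\varnothing$ forces $X(\Abf_K)^{\qt,\Be}\neq\varnothing$, Corollary~\ref{coro:QtBeNonempty} yields one quasitrivial torsor $Y\to X$ whose type $\widehat Q\to\Pic(\bar X)$ is surjective. Hence $\Pic(\bar Y)=0$; this is where finite generation of $\Pic(\bar X)$ enters.
\item For this $Y$ the units torsors are trivial. The units morphism $u\colon Y\to S$ satisfies $Y(\Abf_K)^{\tor}=u^{-1}(S(\Abf_K)^{\tor})$ with $S(\Abf_K)^{\tor}/S(K)=\Acal(S)_{\div}$, and $Y(\Abf_K)^{\tor}=Y(\Abf_K)^{\tor,\tor}$ (Theorem~\ref{thm:TorEqualTorTorTrivialPicardGroup}).
\item It therefore suffices to find one $q_\Abf\in Q(\Abf_K)$ with $[\pi(q_\Abf)u(y_\Abf)]\in\Acal(S)_{\div}$. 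This is obtained by lifting $x_\Abf$, via $X(\Abf_K)^{\tor}=X(\Abf_K)^{\mult}$, to a twist of a single auxiliary torsor under the multiplicative group $\Ker(\Phi)$. That group is built from the exponents $e_1,e_2$ of Lemma~\ref{lemm:TorEqualTorTor:KerRhoFiniteExponent} and Proposition~\ref{prop properties adelic spaces}\ref{prop properties adelic spaces: finite exponent}.
\end{enumerate}
These finite-exponent results convert one finite-level descent condition into infinite divisibility, and that is what replaces your compactness step.
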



\subsection{Tori versus groups of multiplicative type}

In this subsection, we establish the following relation between various descent sets of adelic points. In particular, we show that torsors under groups of multiplicative type do not refine the toric descent set.

\begin{thm} \label{thm:TorEqualMult}
    Let $X$ be a smooth geometrically integral variety defined over a two-dimensional geometric global field $K$. Then, we have
    \[
        X(\Abf_K)^{\tor,\tor} \subseteq X(\Abf_K)^{\qt,\Br_1} \subseteq X(\Abf_K)^{\mult} = X(\Abf_K)^{\tor}.
    \]
\end{thm}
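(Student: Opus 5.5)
We prove the chain of inclusions from left to right, closing it with the easy inclusion $X(\Abf_K)^{\tor} \subseteq X(\Abf_K)^{\mult}$ in reverse; this reverse inclusion is immediate since tori are groups of multiplicative type. So the content is:
\begin{enumerate}[(i)]
    \item $X(\Abf_K)^{\tor,\tor} \subseteq X(\Abf_K)^{\qt,\Br_1}$,
    \item $X(\Abf_K)^{\qt,\Br_1} \subseteq X(\Abf_K)^{\mult}$,
    \item $X(\Abf_K)^{\tor} \subseteq X(\Abf_K)^{\mult}$.
\end{enumerate}
Here (iii) makes (ii) an equality together with the trivial inclusion.

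For (i), fix a torsor $p\colon Y \to X$ under a quasitrivial torus $Q$ and $x_\Abf \in X(\Abf_K)^{\tor,\tor}$. Since $\H^1(K,Q)=0$, the definition of the iterated set gives $y_\Abf \in Y(\Abf_K)^{\tor}$ with $p(y_\Abf) = x_\Abf$. It then suffices to show $Y(\Abf_K)^{\tor} \subseteq Y(\Abf_K)^{\Br_1}$, i.e. the classical fact that toric descent refines the algebraic Brauer--Manin obstruction. We plan the usual argument. An element $\alpha \in \Br_1(Y)$ comes, via the Hochschild--Serre/$\UPic$ description, from $\H^1(K,\UPic(\bar Y))$. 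Using the quasitrivial-realization machinery of Proposition~\ref{prop:QuasitrivialRealization}, applied to a torus $S$ with a map $\widehat{S}\to \UPic(\bar Y)$ through which $\alpha$ factors, we produce a toric torsor $Z\to Y$ such that $\alpha$ is killed on $Z$, up to constant classes coming from $E$. The compatibility \eqref{eq compatibility  BM} then shows that points lifting to twists of $Z$ are orthogonal to $\alpha$. For this step, the fact that the pairing factors through $\Br(X)/\Br_0(X)$ handles constant classes.

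For (ii), let $g\colon Z\to X$ be a torsor under a group $M$ of multiplicative type with $\lambda=\XType(g)$, and take $x_\Abf\in X(\Abf_K)^{\qt,\Br_1}$. Choose a quasitrivial realization $(Q,E,Y,f,p)$ of $\lambda$ and lift $x_\Abf$ to $y_\Abf\in Y(\Abf_K)^{\Br_1}$. Since $Y(\Abf_K)^{\Be_\lambda}\supseteq Y(\Abf_K)^{\Br_1}$, the argument of Proposition~\ref{prop qt Be_lamba existence of type lambda} gives $E(K)\neq\varnothing$, so $E\simeq T$. The key point is to show that $f(y_\Abf)\in E(\Abf_K)$ lies in $T(K)\cdot\pi(Q(\Abf_K))$. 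We intend to use the exact sequence of Proposition~\ref{prop properties adelic spaces}\ref{prop properties adelic spaces: Poitou-Tate}, applied to $T$, with orthogonality to $f^*\Br_1(E)$, which is contained in $\Br_1(Y)$. Since $\widehat{T}$ classes in $\H^2(K,\widehat T)$ give elements of $\Br_1(E)$, this orthogonality places $f(y_\Abf)$ in the closure of $T(K)$ modulo $\pi(Q(\Abf_K))$. Here we must pass from completions to actual cosets. We plan to use Proposition~\ref{prop properties adelic spaces}\ref{prop properties adelic spaces: div = infity div} and \ref{prop properties adelic spaces: finite exponent}, Corollary~\ref{coro:DivA(T)Surjective} and Lemma~\ref{lemm:TorEqualTorTor:KerRhoFiniteExponent} to control divisible parts and finite-exponent torsion. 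Once $f(y_\Abf)=t\cdot\pi(q_\Abf)$, replace $y_\Abf$ by $q_\Abf^{-1}y_\Abf$. Then $y_\Abf$ lies in the fibre $f^{-1}(t)$, which is a twist of $Z$ by the discussion before Proposition~\ref{prop:QuasitrivialRealization}, possibly after the adjustment of the Remark. Its image $x_\Abf$ therefore lies in $X(\Abf_K)^{g}$.

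For (iii), given $g\colon Z\to X$ under $M$, embed $M\hookrightarrow Q$ with torus quotient $T=Q/M$ and push out to $Y=Z\times^MQ$. The torsor $Y\to X$ is a toric torsor, and $Y/M\to X$ is trivial, which gives $f\colon Y\to T$. A point $x_\Abf\in X(\Abf_K)^{\tor}$ lifts to some twist $Y^\sigma(\Abf_K)$. We then need to iterate on the toric torsor $Y\to T$: for $x_\Abf$ in the toric set, $f(y_\Abf)$ should land in $T(K)\pi(Q(\Abf_K))$ after using further toric torsors over $X$ built from $T$. This is the same coset problem as in (ii).

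I expect the main obstacle to be this coset step, common to (ii) and (iii): upgrading orthogonality, or toric-descent membership, to the equality $f(y_\Abf)\in T(K)\pi(Q(\Abf_K))$ in $\Acal(T)$. The difficulty is that the Poitou--Tate sequence only sees $\Acal(T)_\wedge$. My first attempt is to kill the divisible part by Corollary~\ref{coro:DivA(T)Surjective}, and to bound the residual torsion by Lemma~\ref{lemm:TorEqualTorTor:KerRhoFiniteExponent}. The remaining finite-exponent ambiguity would be absorbed by replacing $Q$ with a larger quasitrivial torus or passing through multiplication by $n$.
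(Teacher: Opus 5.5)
Your step (i) is the paper's argument: lift to the quasitrivial torsor (no twists occur) and use $Y(\Abf_K)^{\tor}\subseteq Y(\Abf_K)^{\Br_1}$, which the paper simply cites from \cite[Theorem~6.4]{ILA2021LocalGlobal}. Your quasitrivial-realization sketch of that fact is not a proof, since nothing in it controls where a point of $Y(\Abf_K)^{\tor}$ lands in $E(\Abf_K)$, so cite it instead.

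Your step (ii) is correct and closes with no ``residual torsion.'' Orthogonality of $f(y_\Abf)$ to $\Br_1(T)$, together with Poitou--Tate, puts its class in $\bigcap_n n\Acal(T)$, which equals $\Acal(T)_{\div}$ by Proposition~\ref{prop properties adelic spaces}\ref{prop properties adelic spaces: div = infity div}. Corollary~\ref{coro:DivA(T)Surjective} then gives $\Acal(T)_{\div}\subseteq\pi(\Acal(Q))$, so $f(y_\Abf)\in T(K)\pi(Q(\Abf_K))$ at once. This is a shorter route to $X(\Abf_K)^{\qt,\Br_1}\subseteq X(\Abf_K)^{\mult}$ than the paper's, which goes through Proposition~\ref{prop:TorEqualMult}. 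Work with $Y=Z\times^MQ$ and its canonical trivialization $Y/M\cong X\times T$, so that fibres over $T(K)$ are genuinely twists of $Z$; do not rely on the unproved Remark.

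The genuine gap is (iii), which is the real content of the theorem, and it is not ``the same coset problem as in (ii).''
\begin{itemize}
\item In (ii) you had one lift orthogonal to all of $\Br_1(Y)$. For $x_\Abf\in X(\Abf_K)^{\tor}$ no such lift is known: that would be $X(\Abf_K)^{\tor}\subseteq X(\Abf_K)^{\qt,\Br_1}$, which the paper only proves later, for finitely generated $\Pic(\bar X)$, and using this very theorem.
\item What is available, via \cite[Theorem~6.4]{ILA2021LocalGlobal} as the paper uses it, is only this: for each $\alpha\in\Br_1(Y)$ separately, some lift orthogonal to $\alpha$, with the lift depending on $\alpha$.
\item Your ``further toric torsors over $X$ built from $T$'' names no construction (there is no map $X\to T$) and no resulting condition on $f(y_\Abf)$.
\end{itemize}

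The missing ideas are those of Proposition~\ref{prop:TorEqualMult}:
\begin{enumerate}
\item Restrict to $\varphi^{-1}(\Sha^2(K,\widehat Q))$. By \eqref{eq:EvaluationOfAlgBrauerClassOnTorsor}, orthogonality to these classes does not depend on the lift, so every lift is orthogonal to them.
\item Extend $\alpha\mapsto\sum_v\alpha(y_v)$ through $\Br_1(Y)/\varphi^{-1}(\Sha^2(K,\widehat Q))\hookrightarrow\H^2(K,\widehat Q)/\Sha^2(K,\widehat Q)$ to a functional on $\H^2(K,\widehat Q)$ vanishing on $\Sha^2$, using divisibility of $\Qbf/\Zbf$.
\item Realize this functional by Poitou--Tate for $Q$ as a compatible system $(q^{(n)}_\Abf)_n$. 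This is only an element of $Q(\Abf_K)_\wedge$, not a single adelic point, and it makes $q^{(n)}_\Abf y_\Abf$ orthogonal to $\varphi^{-1}({}_n\H^2(K,\widehat Q))$.
\item Push to $T$ and apply Poitou--Tate for $T$ to get $t^{(n)}\in T(K)$ with $t^{(n)}\equiv t^{(n)}_\Abf:=f(q^{(n)}_\Abf y_\Abf)\bmod nT(\Abf_K)$.
\item Take $N$ annihilating $\H^1(K_v,M)$ for all $v$ (restriction--corestriction). Then $\delta(t^{(N)})|_{K_v}=\delta_v(t^{(N)}_v)$, so $x_v$ lifts to a single twist $Z^\sigma$ everywhere locally.
\item Conclude with Lemma~\ref{lemma local lift implies adelic lift}.
\end{enumerate}
The finite exponent that matters here is that of $\H^1(\cdot,M)$. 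Proposition~\ref{prop properties adelic spaces}\ref{prop properties adelic spaces: finite exponent} and Lemma~\ref{lemm:TorEqualTorTor:KerRhoFiniteExponent} play no role in this theorem.
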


\noindent The proof of Theorem~\ref{thm:TorEqualMult} relies on Proposition~\ref{prop:TorEqualMult} below. We first recall the following general fact: let $f \colon Y \to X$ be a torsor under a $K$-torus $T$. Set
    \[
        \Br_1(Y/X) \coloneq \Ker\bigl(\Br(Y) \longrightarrow \Br(\bar{Y})/f^\ast \Br(\bar{X}) \bigr),
    \]
which contains $\Br_1(Y)$. By \cite[(2.4) and Proposition~2.3]{HW2020Galois}, there exists a morphism
    \begin{equation} \label{eq:TheMapPhi}
        \varphi \colon  \Br_1(Y/X) \longrightarrow \H^2(K, \widehat{T})
    \end{equation}
with the following property: for any field extension $L/K$, algebraic Brauer class $\alpha \in \Br_1(Y)$, and $L$-rational points $y \in Y(L)$ and $t \in T(L)$, we have
    \begin{equation} \label{eq:EvaluationOfAlgBrauerClassOnTorsor}
        \alpha(t \cdot y) = \alpha(y) + t \cup \varphi(\alpha).
    \end{equation}

\begin{prop} \label{prop:TorEqualMult}
    For an adelic point $x_\Abf \in X(\Abf_K)$, the following are equivalent.
    
    \begin{enumerate}
        \item \label{prop:TorEqualMult:1} $x_\Abf \in X(\Abf_K)^{\mult}$.

        \item \label{prop:TorEqualMult:2} $x_\Abf \in X(\Abf_K)^{\tor}$.

        \item \label{prop:TorEqualMult:3} 
        Let $Y \to X$ be a torsor under a quasitrivial $K$-torus $Q$ with the associated~morphism
        \[
            \varphi\colon \Br_1(Y) \longrightarrow \H^2(K,\widehat{Q})
        \]
        
        given by \eqref{eq:TheMapPhi}. Then, any adelic point $y_\Abf \in Y(\Abf_K)$ lifting $x_\Abf$ is orthogonal to the group $\varphi^{-1}\bigl(\Sha^2(K,\widehat{Q})\bigr)$.

        \item \label{prop:TorEqualMult:4}
        Let $Y \to X$ and $\varphi$ be as in \ref{prop:TorEqualMult:3} and $y_\Abf\in Y(\Abf_K)$ be any adelic point lifting $x_\Abf$. Then, there exists a collection of adelic points
            \[
                q_{\Abf}^{(n)} \in Q(\Abf_K)
            \]
        indexed by $n \ge 1$ that satisfies the following conditions.
        
        \begin{enumerate}
            \item \label{prop:TorEqualMult:4a} $q_{\Abf}^{(n)} \cdot y_\Abf$ is orthogonal to $\varphi^{-1}\bigl(\tensor[_n]{\H}{^2}(K,\widehat{Q}) \bigr)$ for all $n \ge 1$.
            
            \item \label{prop:TorEqualMult:4b} $q_\Abf^{(n)} \equiv q_\Abf^{(m)} \pmod{n Q(\Abf_K)}$ whenever $n \mid m$.
        \end{enumerate}
    \end{enumerate}
\end{prop}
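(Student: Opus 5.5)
We will prove the cycle of implications \ref{prop:TorEqualMult:1}$\Rightarrow$\ref{prop:TorEqualMult:2}$\Rightarrow$\ref{prop:TorEqualMult:3}$\Rightarrow$\ref{prop:TorEqualMult:4}$\Rightarrow$\ref{prop:TorEqualMult:1}.

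The first implication is formal, since tori are groups of multiplicative type, so $X(\Abf_K)^{\mult}\subseteq X(\Abf_K)^{\tor}$.

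For \ref{prop:TorEqualMult:2}$\Rightarrow$\ref{prop:TorEqualMult:3}, fix $Y\to X$ under $Q$ and a lift $y_\Abf$. Let $\alpha\in\Br_1(Y)$ with $\varphi(\alpha)\in\Sha^2(K,\widehat{Q})$. By \eqref{eq:EvaluationOfAlgBrauerClassOnTorsor}, the function $q_\Abf\mapsto \mathrm{BM}(q_\Abf y_\Abf,\alpha)$ differs from $\mathrm{BM}(y_\Abf,\alpha)$ by $\sum_v q_v\cup\varphi(\alpha)$. This sum vanishes because $\varphi(\alpha)$ is locally trivial, so the value $\mathrm{BM}(y_\Abf,\alpha)$ is independent of the lift. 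The plan is then to produce a torus torsor that kills the pairing. Since $\alpha$ is algebraic with $\varphi(\alpha)$ locally trivial, and $\Sha^2(K,\widehat Q)$ is finite for a quasitrivial torus (in fact trivial by Shapiro, since $\Sha^2(L,\Zbf)=\Sha^1(L,\Qbf/\Zbf)$ for finite extensions), we get $\varphi(\alpha)=0$. Hence $\alpha$ lies in $\Ker\varphi$, which by \cite[Proposition~2.3]{HW2020Galois} comes from $\Br_1(X)$ modulo constants. It therefore suffices to show that $x_\Abf\in X(\Abf_K)^{\tor}$ is orthogonal to $\Br_1(X)$. For this we represent $\alpha\in\Br_1(X)$, via the Hochschild--Serre description and $\H^2(K,-)$ of $\UPic$, by an extended type $\lambda\colon\widehat{M}\to\UPic(\bar X)$ for some torus $M$. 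Here we use that any class in $\H^1(K,\Pic\bar X)$ is a cup product with a type. A torsor of that type exists because $x_\Abf\in X(\Abf_K)^{\tor}$ forces $X(\Abf_K)^{\qt,\Be}\neq\varnothing$-type local solvability, and the classical argument of Colliot-Thélène--Sansuc \cite{CTS1987Descente} then shows $\alpha$ is orthogonal on descent sets. I expect this identification of algebraic Brauer classes with torus descent to be the main obstacle, especially since $\Pic\bar X$ is not assumed finitely generated here, so one works with limits of tori.

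For \ref{prop:TorEqualMult:3}$\Rightarrow$\ref{prop:TorEqualMult:4}, the map $\varphi^{-1}({}_n\H^2)\to{}_n\H^2(K,\widehat Q)$, together with \eqref{eq:EvaluationOfAlgBrauerClassOnTorsor}, turns the requirement into a linear problem. We need $q^{(n)}_\Abf$ whose image in $\Hom({}_n\H^2(K,\widehat Q),\Qbf/\Zbf)$ equals a prescribed character. Condition (3) says this character kills $\Sha^2$. The Poitou--Tate sequence of Proposition~\ref{prop properties adelic spaces}\ref{prop properties adelic spaces: Poitou-Tate}, taken modulo $n$, gives solvability. Compatibility \ref{prop:TorEqualMult:4b} follows by choosing solutions in the profinite completion $Q(\Abf_K)_\wedge$ compatibly, using compactness of the inverse system of finite solution sets, which are cosets of images of finite groups.

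For \ref{prop:TorEqualMult:4}$\Rightarrow$\ref{prop:TorEqualMult:1}, take a torsor $Z\to X$ under $M$ of type $\lambda$ and a quasitrivial realization. By Proposition~\ref{prop:QuasitrivialRealization}, twisted $Z$'s are the fibres of $f\colon Y\to E$ over $K$-points. Then $x_\Abf\in X(\Abf_K)^Z$ iff some lift $y_\Abf$ has $f(y_\Abf)\in T(\Abf_K)\text{-orbit of } E(K)$ up to $\pi(Q(\Abf_K))$. This is a condition in $\Acal(T)/\pi\Acal(Q)$, which has finite-exponent torsion (Proposition~\ref{prop properties adelic spaces}\ref{prop properties adelic spaces: finite exponent}). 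The compatible $q^{(n)}_\Abf$ produce an element in $\Acal(T)_{\infty-\div}=\Acal(T)_{\div}$, which lies in the image from $Q$ by Corollary~\ref{coro:DivA(T)Surjective}. This concludes the cycle.
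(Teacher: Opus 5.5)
\noindent Your implication (2)$\Rightarrow$(3) has a genuine gap. Your first observation is correct: $\mathrm{BM}(y_\Abf,\alpha)$ does not depend on the lift when $\varphi(\alpha)$ is locally trivial. You then claim $\Sha^2(K,\widehat{Q})=0$, using Shapiro's lemma and $\Sha^2(L,\Zbf)\cong\Sha^1(L,\Qbf/\Zbf)$. The identification is fine, but the vanishing of $\Sha^1(L,\Qbf/\Zbf)$ is a Chebotarev-type fact about number fields. It fails for geometric global fields, already for $Q=\mathbf{G}_m$.

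For instance, take $K$ of type (a) and suppose every closed point of $C$ has degree divisible by $n$ over $\mathbf{C}\lau{t}$. This happens, e.g., for $C$ a genus-one curve of period $n$, since period divides index. Each $K_v$ contains a copy of $\kappa(v)\cong\mathbf{C}\lau{t^{1/e_v}}$ with $n\mid e_v$. Hence the nontrivial cyclic extension $K(t^{1/n})/K$ splits at every $v\in C^{(1)}$, giving a nonzero element of $\Sha^1(K,\Zbf/n)\subseteq\Sha^2(K,\Zbf)$.

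This is exactly why the statement uses $\varphi^{-1}\bigl(\Sha^2(K,\widehat{Q})\bigr)$ rather than $\Ker\varphi$, and why $\Be$ rather than $\Br_1$ runs through the paper. Classes $\alpha\in\Br_1(Y)$ with $0\neq\varphi(\alpha)\in\Sha^2(K,\widehat{Q})$ need not come from $\Br_1(X)$. Your reduction to ``$X(\Abf_K)^{\tor}$ is orthogonal to $\Br_1(X)$'' therefore misses them, and that reduction is itself only gestured at via Colliot-Th\'el\`ene--Sansuc. The missing input is \cite[Theorem~6.4]{ILA2021LocalGlobal}. For $x_\Abf\in X(\Abf_K)^{\tor}$ and each $\alpha\in\varphi^{-1}\bigl(\Sha^2(K,\widehat{Q})\bigr)$, it provides \emph{some} lift $y^\alpha_\Abf$ orthogonal to $\alpha$. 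Your lift-independence observation, via \eqref{eq:EvaluationOfAlgBrauerClassOnTorsor}, then gives (3) for every lift.

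\noindent The other two steps are repairable but not correct as written.

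In (3)$\Rightarrow$(4), the solution sets are not finite: already $\mathcal{O}_v^\times/n\cong\Zbf/n$ at every $v$, so $\mathbf{G}_m(\Abf_K)/n$ is infinite. Moreover, extensions of the functional chosen separately at each level $n$ need not be compatible. The clean route is the paper's. Extend $\alpha\mapsto\sum_v\alpha(y_v)$ once, using divisibility of $\Qbf/\Zbf$, to a functional on all of $\H^2(K,\widehat{Q})$ that kills $\Sha^2(K,\widehat{Q})$. Then take a single preimage in $Q(\Abf_K)_\wedge$ from the Poitou--Tate sequence; its components give (4a) and (4b) simultaneously.

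In (4)$\Rightarrow$(1), the family $(q^{(n)}_\Abf)$ does not define one element of $\Acal(T)_{\infty-\div}$. Also, finite exponent of $(\Acal(T)/\Acal(T)_{\div})_{\tors}$ does not control the quotient $\Acal(T)/\pi(\Acal(Q))$. What works is the following. The group $T(K_v)/\pi(Q(K_v))$ embeds in $\H^1(K_v,M)$, which is killed by a fixed $N$. By the local-to-adelic lifting lemma, $\Acal(T)/\pi(\Acal(Q))$ is then $N$-torsion. So the single level-$N$ congruence $t^{(N)}\equiv t^{(N)}_\Abf \pmod{NT(\Abf_K)}$ suffices, and this is essentially the paper's argument.
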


\begin{proof}
    The implication~\ref{prop:TorEqualMult:1}$\Rightarrow$\ref{prop:TorEqualMult:2} is obvious.\par   
    
    Assume~\ref{prop:TorEqualMult:2}. Let $\alpha \in \Br_1(Y)$ belong to $\varphi^{-1}(\Sha^2(K,\widehat{Q}))$. By \cite[Theorem~6.4]{ILA2021LocalGlobal}, there exists a lift $y_\Abf^{\alpha} \in Y(\Abf_K)$ of $x_\Abf$ that is orthogonal to $\alpha$. We have $y_\Abf^{\alpha} = q_\Abf^{\alpha} \cdot y_\Abf$ for some $q_\Abf^{\alpha} \in Q(\Abf_K)$. Since $\varphi(\alpha)|_{K_v} = 0$ in $\H^2(K_v,\widehat{Q})$ for all $v$, it follows from \eqref{eq:EvaluationOfAlgBrauerClassOnTorsor} that
    \[
        \sum_v \alpha(y_v) = \sum_v \alpha(y_v^\alpha) - q_v^\alpha \cup \bigl(\varphi(\alpha)|_{K_v}\bigr) = \sum_v \alpha(y_v^\alpha) = 0. 
    \]
    This proves~\ref{prop:TorEqualMult:3}.
    \par
    
    Assume~\ref{prop:TorEqualMult:3}. Then, the functional homomorphism
    \[
        \varepsilon\colon \Br_1(Y) \longrightarrow \Qbf/\Zbf, \qquad \alpha \longmapsto \sum_v \alpha(y_v)
    \] 
    vanishes on $\varphi^{-1}\bigl(\Sha^2(K,\widehat{Q})\bigr)$. Since $\varphi$ induces an injective homomorphism
    \[
        \frac{\Br_1(Y)}{\varphi^{-1}\bigl(\Sha^2(K,\widehat{Q})\bigr)} \longhookrightarrow \frac{\H^2(K,\widehat{Q})}{\Sha^2(K,\widehat{Q})}
    \]
    and $\Qbf/\Zbf$ is a divisible abelian group, then there exists a homomorphism
    \[
        \varepsilon'\colon \H^2(K,\widehat{Q}) \longrightarrow \Qbf/\Zbf
    \]
    such that $\varepsilon'|_{\Sha^2(K,\widehat{Q})} = 0$ and $\varepsilon' \circ \varphi = \varepsilon$. Proposition~\ref{prop properties adelic spaces}~\ref{prop properties adelic spaces: Poitou-Tate} provides adelic points
    \[
        q_{\Abf}^{(n)}  \in Q(\Abf_K),
    \]
    for $n \ge 1$, that satisfy condition~\ref{prop:TorEqualMult:4b} and
    \[
        \varepsilon'(\beta) = -\sum_{v} q_v^{(n)} \cup \bigl(\beta|_{K_v}\bigr)
    \]
    for all $n \ge 1$ and $\beta \in \tensor[_n]{\H}{^2}(K,\widehat{Q})$. For every $\alpha \in \varphi^{-1}\bigl(\tensor[_n]{\H}{^2}(K,\widehat{Q})\bigr)$ we have
    \[
        \sum_v \alpha(q_v^{(n)} \cdot y_v) = \sum_v \alpha(y_v) + q_v^{(n)} \cup \bigl(\varphi(\alpha)|_{K_v}\bigr) = \varepsilon(\alpha) - \varepsilon'(\varphi(\alpha)) = 0
    \]
  by construction of $\varepsilon'$, which establishes condition~\ref{prop:TorEqualMult:4a}.
    \par
    
    Now assume that condition~\ref{prop:TorEqualMult:4} holds. Let $f\colon Z \to X$ be a torsor under a $K$-group of multiplicative type $M$. Take any exact sequence
    \[
        1 \longrightarrow M \longrightarrow Q \overset{\pi}{\longrightarrow} T \longrightarrow 1,
    \]
    where $Q$ and $T$ are $K$-tori and $Q$ is quasitrivial. Let $g\colon Y \to X$ be the torsor under $Q$ defined by $Y \coloneq Z \times^M Q$. Note that the adelic point $x_\Abf$ admits a lift $y_\Abf \in Y(\Abf_K)$ since $\H^1(K_v,Q) = 0$ for all $v \in C^{(1)}$. Additionally, the torsor $Y/M \to X$ under $T$ is trivial, so there exists a trivialisation $Y/M \simeq X \times T$. Let
    \[
        h \colon Y \to T
    \]
    be the composition of the quotient $Y \to Y/M$ and the projection on the se= (cond factor. Note that $h$ is $\pi$-equivariant. Set
    \[
        t_v^{(n)} \coloneq h(q_v^{(n)} \cdot y_v) = \pi(q_v^{(n)}) h(y_v) \qquad \text{and} \qquad t_\Abf^{(n)} = \bigl(t_v^{(n)}\bigr)_v \in T(\Abf_K),
    \]
    where $q_v^{(n)}$ comes from \ref{prop:TorEqualMult:4}. By condition~\ref{prop:TorEqualMult:4a} and functoriality of the homomorphism \eqref{eq:TheMapPhi}, the adelic point $t_\Abf^{(n)}$ is orthogonal to $\tensor[_n]{\H}{^2}(K,\widehat{T})$. Condition~\ref{prop:TorEqualMult:4b} ensures that the collection  $\bigl(t_\Abf^{(n)}\bigr)_n$ defines an element of $T(\Abf_K)_\wedge$. We now consider the Poitou--Tate sequence from Proposition~\ref{prop properties adelic spaces}~\ref{prop properties adelic spaces: Poitou-Tate}. The image of $\bigl(t_\Abf^{(n)}\bigr)_n$ in $\Hom\bigl(\H^2(K,\widehat{T}),\Qbf/\Zbf\bigr)$ is zero because $\H^2(K,\widehat{T}) = \varinjlim_n \tensor[_n]{\H}{^2}(K,\widehat{T})$. Then, Proposition~\ref{prop properties adelic spaces}~\ref{prop properties adelic spaces: Poitou-Tate} gives $K$-rational points $t^{(n)} \in T(K)$, for $n \ge 1$, such that 
    \[
        t^{(n)} \equiv t_\Abf^{(n)} \pmod{nT(\Abf_K)}.
    \]
    Let $N \ge 1$ be an integer such that $N \cdot \H^1(K,M) = 0$. Such an integer exists because over a some finite extension $L/K$, one has
    \[
        M_L \cong F \times \Gbf_{\m,L}^r,  
    \]
    where $F$ is a finite commutative $L$-group scheme and $r \ge 0$. Since $\H^1(L,\Gbf_{\m}) = 0$ by Hilbert's Theorem 90, a restriction--corestriction argument shows that one can take $N = |F| \cdot [L:K]$. Since $t^{(N)} \equiv t_v^{(N)} \pmod{N T(K_v)}$ for all $v$, we have
    \[
        \delta (t^{(N)})|_{K_v}= \delta_v(t_v^{(N)}),
    \]
    where $\delta: T(K) \to \H^1(K,M)$ and $\delta_v : T(K_v) \to \H^1(K_v,M)$ are the connecting homomorphisms. Let $\sigma \in \operatorname{Z}^1(K,M)$ be a cocycle such that $[\sigma] = -\delta(t^{(N)})$. 
    \par
    
    We claim that $x_v$ lifts to $Z^{\sigma}(K_v)$ for all $v$. Indeed, the fibre $\pi^{-1}(t_v^{(N)})$ is a $K_v$-torsor under $M$ with $[\pi^{-1}(t_v^{(N)})] = \delta(t_v^{(N)})$ in $\H^1(K_v,M)$. Thus
    \[
        Y_{t_v^{(N)}}=f^{-1}(t_v^{(N)}) = Z \times^Q \pi^{-1}(t_v^{(N)})
    \]
    is an $X_{K_v}$-torsor under $M$ with
    \[
        [Y_{t_v^{(N)}}] = [Z_{K_v}] - p^\ast [\sigma]_{K_v} = [Z^\sigma_{K_v}]    
    \]
    in $\H^1(X_{K_v},M)$, where $p \colon X \to \Spec(K)$ denotes the structure morphism. But $x_v$ lifts to a $K_v$-point on $Y_{t_v^{(N)}}$ (namely $q_v^{(N)} \cdot y_v$), so $x_v$ lifts to a $K_v$-point on $Z^\sigma$ as well. This is enough to conclude \ref{prop:TorEqualMult:1} given Lemma~\ref{lemma local lift implies adelic lift} below.
\end{proof}

\begin{lemm}\label{lemma local lift implies adelic lift}
    Let $X$ be a smooth geometrically integral $K$ variaty and $f:Y \to X$ a torsor under a linear $K$ group $G$. Then, we have the following equality of subsets of $\prod_{v \in C^{(1)}} X(K_v)$:
    \[
    f(Y(\Abf_K)) = \left(\prod_{v \in C^{(1)}} f(Y(K_v)) \right) \cap X(\Abf_K).
    \]
\end{lemm}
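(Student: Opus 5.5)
The inclusion $f(Y(\Abf_K)) \subseteq \bigl(\prod_v f(Y(K_v))\bigr) \cap X(\Abf_K)$ is immediate, so the plan is to prove the reverse inclusion. Let $(x_v)_v \in X(\Abf_K)$ be such that each $x_v$ lifts to some $y_v \in Y(K_v)$. We must adjust finitely many, or rather almost all, of the $y_v$ so that they become integral. The key input is a spreading-out argument. Choose a nonempty open $U \subseteq C$, a smooth affine group scheme $\mathcal{G}$ over $U$ with generic fibre $G$ and connected fibres (after possibly first reducing to $G$ replaced by its fibres being smooth connected; I will handle $\pi_0(G)$ separately below), models $\mathcal{X}$ and $\mathcal{Y}$ over $U$, and a morphism $\mathcal{Y} \to \mathcal{X}$ that is an fppf $\mathcal{G}$-torsor extending $f$. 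Shrinking $U$, we may assume $x_v \in \mathcal{X}(\mathcal{O}_v)$ for all $v \in U^{(1)}$.

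Next, for $v \in U^{(1)}$, pull back $\mathcal{Y}$ along $x_v \colon \Spec \mathcal{O}_v \to \mathcal{X}$. This gives a $\mathcal{G}_{\mathcal{O}_v}$-torsor $P_v$ over $\mathcal{O}_v$, whose generic fibre has a $K_v$-point because $y_v$ exists. The residue field $\kappa(v)$ is a field of characteristic $0$ of cohomological dimension $\le 1$: in type (a) it is a finite extension of $\mathbf{C}\lau{t}$, and in type (b) it is a finite extension of $\mathbf{C}\lau{x}$ or $\mathbf{C}\lau{y}$. By Steinberg's theorem for connected linear groups over such fields, $\H^1(\kappa(v), \mathcal{G}_{\kappa(v)}) = 1$ when the special fibre is connected. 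Since $\mathcal{O}_v$ is henselian (complete) and $\mathcal{G}$ is smooth, Hensel's lemma gives $\H^1(\mathcal{O}_v,\mathcal{G}) \hookrightarrow \H^1(\kappa(v),\mathcal{G})$, and hence $P_v$ is trivial. Therefore $P_v(\mathcal{O}_v) \neq \varnothing$, and we may replace $y_v$ by an integral lift $y'_v \in \mathcal{Y}(\mathcal{O}_v)$ over $x_v$. The remaining $y_v$ for $v \notin U$ are kept, and the resulting $(y'_v)$ lies in $Y(\Abf_K)$ and maps to $x_\Abf$.

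The main obstacle is the case of non-connected $G$, where Steinberg's theorem does not apply directly. In that case, I would first factor the torsor as $Y \to Y/G^\circ \to X$, where $Y/G^\circ \to X$ is a torsor under the finite group $G/G^\circ$, which is finite étale. For the finite part, a torsor under a finite étale $\mathcal{O}_v$-group scheme has a $K_v$-point if and only if it has an $\mathcal{O}_v$-point, by the valuative criterion and properness. This step is automatic and requires no cohomological input. For the second step, $Y \to Y/G^\circ$ is a $G^\circ$-torsor, so the connected case just treated applies to the lifted adelic point of $Y/G^\circ$. Some care is needed to make the models compatible, but shrinking $U$ once more should handle this. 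In particular, one needs a model of $G^\circ$ with connected fibres, which can be obtained by taking the fibrewise identity component of a smooth model after shrinking $U$.
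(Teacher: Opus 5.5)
Your proposal is correct and follows essentially the paper's argument: spread out over an open $U \subseteq C$, use the valuative criterion of properness for the finite étale component group to reduce to the identity component, and then show the remaining $\mathcal{O}_v$-torsor under $\mathcal{G}^\circ$ is trivial by Steinberg's theorem on the residue field together with Hensel's lemma. The only difference is packaging: you factor geometrically as $Y \to Y/G^\circ \to X$, whereas the paper phrases the same reduction as the class of $f^{-1}(x_v)$ in $\H^1(\mathcal{O}_v,\mathcal{G})$ coming from $\H^1(\mathcal{O}_v,\mathcal{G}^\circ)$.
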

\begin{proof}
    The contention $f(Y(\Abf_K)) \subseteq \left(\prod_{v \in C^{(1)}} f(Y(K_v)) \right) \cap X(\Abf_K)$ is clear. Let $x_{\Abf} \in X(\mathbf{A}_K) \cap \prod_{v \in C^{(1)}} f(Y(K_v))$. Let $G^{\circ}$ be the connected component of $G$ and denote by $G^{\mathrm{f}} =G/G^{\circ}$ the group of connected components. We can find an open subset  $U \subseteq C$ such that the following holds: 
        \begin{itemize}
            \item $G$ extends to a group scheme $\mathcal{G}$ over $U$, 
            \item $X$ extends to a smooth scheme $\mathcal{X}$ over $U$,
            \item $x_v \in \mathcal{X}(\mathcal{O}_v)$ for every $v \in U$,
            \item $Y$ extends to a $\mathcal{X}$-torsors $\mathcal{Y}$ under $\mathcal{G}$ over $U$, and
            \item If we denote by $\mathcal{G}^{\circ}$ and $\mathcal{G}^{\mathrm{f}}$ group scheme extending $G^{\circ}$ and $G^{\mathrm{f}}$ respectively there is an exact sequence
            \[
            1 \to \mathcal{G}^{\circ} \to \mathcal{G} \to \mathcal{G}^{\mathrm{f}} \to 1.
            \]
        \end{itemize}
        We now prove that for every $v \in U$ the $\mathcal{O}_v$-torsor $f^{-1}(x_v)$ is trivial. Note that the quotient of $f^{-1}(x_v)$ by the action of $\mathcal{G}^0 \otimes_U \mathcal{O}_v$ is a torsors under $\mathcal{G}^{\mathrm{f}}$ that has a point over $K_v$. We can apply the valuative criterion of properness to deduce that the quotient admits an $\mathcal{O}_v$-point. Therefore, the class of $f^{-1}(x_v)$ in $\H^1(\mathcal{O}_v, \mathcal{G})$ comes from $\H^1(\mathcal{O}_v, \mathcal{G}^{\circ})$. However, every $\mathcal{O}_v$-torsor under a smooth connected group is trivial thanks to Steinberg's theorem \cite[Ch III \S 2.3 Théorème 1']{Serre1994CohomologieGaloisienne} and Hensel's lemma. We conclude that there exists a lift $y_{\Abf} \in \prod_{v \in U} \mathcal{Y}(\mathcal{O}_v) \times \prod_{v \in C^{(1)} \setminus U }Y(K_v)$ of $x_{\Abf}$ along $f$.
\end{proof}

\begin{proof}[Proof of Theorem~\ref{thm:TorEqualMult}]
    It follows immediately from Proposition~\ref{prop:TorEqualMult} that
    \[
        X(\Abf_K)^{\qt,\Br_1} \subseteq X(\Abf_K)^{\mult} = X(\Abf_K)^{\tor}.
    \]
    Let $Y \to X$ be a torsor under a quasitrivial $K$-torus $Q$. By definition, any $x_\Abf \in X(\Abf_K)^{\tor,\tor}$ lifts to an adelic point $y_\Abf \in Y(\Abf_K)^{\tor}$ (note that there is no nontrivial Galois twist of $Y$). On the other hand, it is a consequence of \cite[Theorem 6.4]{ILA2021LocalGlobal} that $y_\Abf \in Y(\Abf_K)^{\Br_1}$. This shows that $X(\Abf_K)^{\tor,\tor} \subseteq X(\Abf_K)^{\qt,\Br_1}$.
\end{proof}

\subsection{Varieties with \texorpdfstring{$\Pic(\bar{X}) = 0$}{trivial Picard group}}

This subsection is devoted to the proof of Theorem~\ref{thm:TorEqualTorTor} for varieties with vanishing geometric Picard group, which is a key step for the proof of Theorem~\ref{thm:TorEqualTorTor}

\begin{thm} \label{thm:TorEqualTorTorTrivialPicardGroup}
    Let $X$ be a smooth geometrically integral variety over $K$ with $\Pic(\bar{X}) = 0$. Then, we have
    \[
        X(\Abf_K)^{\tor,\tor} = X(\Abf_K)^{\qt,\Br_1} = X(\Abf_K)^{\mult} = X(\Abf_K)^{\tor}.
    \]
\end{thm}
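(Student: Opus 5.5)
By Theorem~\ref{thm:TorEqualMult} we already have
\[
    X(\Abf_K)^{\tor,\tor} \subseteq X(\Abf_K)^{\qt,\Br_1} \subseteq X(\Abf_K)^{\mult} = X(\Abf_K)^{\tor}.
\]
It therefore suffices to prove $X(\Abf_K)^{\tor} \subseteq X(\Abf_K)^{\tor,\tor}$ when $\Pic(\bar{X}) = 0$.

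Fix $x_\Abf \in X(\Abf_K)^{\tor}$ and a torsor $g\colon Z \to X$ under a $K$-torus $S$. We must find a twist $Z^\sigma$ and a lift of $x_\Abf$ lying in $Z^\sigma(\Abf_K)^{\tor}$. Since $x_\Abf \in X(\Abf_K)^g$, some twist $Z^\sigma$ admits a lift $z_\Abf \in Z^\sigma(\Abf_K)$. All other lifts to twists have the form $s_\Abf \cdot z_\Abf$, possibly after changing $\sigma$ by a global class. So the task is to adjust $z_\Abf$ by $S(\Abf_K)$ modulo $S(K)$ until it lies in $Z^\sigma(\Abf_K)^{\tor}$.

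\textbf{Step 1: reduce $Z^\sigma(\Abf_K)^{\tor}$ to a Brauer condition.} Since $\Pic(\bar{X})=0$, the exact sequence
\[
    0 \to \bar{K}[X]^\times/\bar{K}^\times \to \bar{K}[Z]^\times/\bar{K}^\times \to \widehat{S} \to \Pic(\bar{X}) \to \Pic(\bar{Z})
\]
shows that $\Pic(\bar{Z}) = 0$ as well (via Sansuc's sequence). Then $\UPic(\bar{Z})$ is a lattice shifted by $[1]$, and every torsor over $Z$ under a torus is, up to twist, induced from a torsor under a quasitrivial torus. Using Proposition~\ref{prop:TorEqualMult} (condition~\ref{prop:TorEqualMult:3}) together with \cite[Theorem~6.4]{ILA2021LocalGlobal}, I would identify $Z^\sigma(\Abf_K)^{\tor}$ with the set of adelic points orthogonal to an explicit subgroup $B \subseteq \Br_1(Z^\sigma)$. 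Concretely, $B$ should be the preimage of a $\Sha^2$ via the map $\varphi$ of \eqref{eq:TheMapPhi}, for a chosen quasitrivial torsor.

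\textbf{Step 2: encode the defect as a homomorphism.} By \eqref{eq:EvaluationOfAlgBrauerClassOnTorsor}, moving $z_\Abf$ by $s_\Abf \in S(\Abf_K)$ changes the pairing with $\alpha \in \Br_1(Z^\sigma/X)$ by $\sum_v s_v \cup \varphi(\alpha)_v$. The defect $\alpha \mapsto \sum_v \alpha(z_v)$ is a homomorphism on $\Br_1(Z^\sigma)$. I would show it factors through $\varphi$ into $\H^2(K,\widehat{S})$ modulo $\Sha^2$, extended to $\Qbf/\Zbf$ by divisibility, as in the proof of Proposition~\ref{prop:TorEqualMult}. The input that makes this work is that $x_\Abf \in X(\Abf_K)^{\tor} \subseteq X(\Abf_K)^{\Br_1}$ kills the classes pulled back from $X$. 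Since $\Pic(\bar X)=0$, $\Br_1(Z^\sigma)$ is generated by $g^*\Br_1(X)$ and by $\varphi$-relevant classes.

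\textbf{Step 3: solve for the correction.} The Poitou--Tate sequence of Proposition~\ref{prop properties adelic spaces}~\ref{prop properties adelic spaces: Poitou-Tate} then produces an element of $S(\Abf_K)_\wedge$ that cancels the defect. The \textbf{main obstacle} is that this only gives a compatible family $s_\Abf^{(n)}$, as in condition~\ref{prop:TorEqualMult:4}, rather than a genuine adelic point. To upgrade it, I would use Proposition~\ref{prop properties adelic spaces}~\ref{prop properties adelic spaces: div = infity div} and~\ref{prop properties adelic spaces: finite exponent}, together with Lemma~\ref{lemm:TorEqualTorTor:KerRhoFiniteExponent} applied to the sequence of tori relating $S$ and the quasitrivial torus. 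The goal is to show that the obstruction lives in a quotient of $\Acal(S)$ by divisible elements and images that has finite exponent. A single $N$-level solution $s_\Abf^{(N)}$ then suffices, with any remaining error absorbed by a global point of $S$ (hence a further twist $\sigma$) together with a divisible element, which pairs trivially.

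Finally, Lemma~\ref{lemm:TorTorSubgroup} applied to tori is a useful sanity check: it covers the case $X$ a torsor under a torus. The general $\Pic(\bar{X})=0$ case should follow the same pattern via the reduction of Step~1.
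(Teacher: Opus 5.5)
There is a genuine gap, and it sits exactly where $\Pic(\bar X)=0$ has to be used. In Step~1 you need the direction ``orthogonal to $B$ $\Rightarrow$ in $Z^\sigma(\Abf_K)^{\tor}$'', but nothing you cite gives it. \cite[Theorem~6.4]{ILA2021LocalGlobal} and the implication \ref{prop:TorEqualMult:2}$\Rightarrow$\ref{prop:TorEqualMult:3} of Proposition~\ref{prop:TorEqualMult} go the other way. The implication \ref{prop:TorEqualMult:3}$\Rightarrow$\ref{prop:TorEqualMult:2} is a condition on lifts to \emph{every} quasitrivial torsor $W\to Z^\sigma$, tested against $\varphi_W^{-1}\bigl(\Sha^2(K,\widehat Q)\bigr)\subseteq\Br_1(W)$. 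It is not a condition involving a subgroup of $\Br_1(Z^\sigma)$. Obtaining a usable description of $Z^\sigma(\Abf_K)^{\tor}$ is precisely what the paper's units-torsor argument does: Proposition~\ref{prop:UnitTorsorPreserveTor} ($Z(\Abf_K)^{\tor}=u^{-1}(E_Z(\Abf_K)^{\tor})$ when $\Pic(\bar Z)=0$), combined with $T(\Abf_K)^{\tor}/T(K)=\Acal(T)_{\div}$ from Proposition~\ref{prop:TorEqualTorTorForTori}.

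Step~2 has a further unaddressed point. The admissible twists form a coset of $\Sha^1(K,S)$, and for a wrong one the defect does not vanish on $\varphi^{-1}(\Sha^2(K,\widehat S))$. Already for $X=\Spec K$ and $0\neq[Z^\sigma]\in\Sha^1(K,S)$, the defect is nonzero on $\Be(Z^\sigma)$. So ``as in Proposition~\ref{prop:TorEqualMult}'' does not apply: there $Q$ is quasitrivial and no twisting occurs.

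Step~3, which you flag as the main obstacle, is not carried out. Finite exponent of a kernel (Lemma~\ref{lemm:TorEqualTorTor:KerRhoFiniteExponent}, Proposition~\ref{prop properties adelic spaces}~\ref{prop properties adelic spaces: finite exponent}) only bounds the order of the residual class. To kill it you must also show it is divisible by that exponent in the right quotient. That is why the paper's general-case proof builds the auxiliary torsor under $N=\Ker(\Phi)$ with the integers $e_1,e_2$. Your plan has no such mechanism, and for $\Pic(\bar X)=0$ none is needed.

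The paper's route, in your notation, is as follows.
\begin{enumerate}
\item $u_X(x_\Abf)\in E_X(\Abf_K)^{\tor}$ forces $E_X(K)\neq\varnothing$, via \cite[Theorem~6.4]{ILA2021LocalGlobal} and the $\Be$-local--global principle for torsors under tori. So $E_X$ is a torus.
\item By \cite[Theorem~8.10]{HS2013Descent}, $Z\to X$ is the pullback along $u_X$ of a torsor $E_Z\to E_X$ under $S$.
\item Proposition~\ref{prop:TorEqualTorTorForTori}~\ref{prop:TorEqualTorTorForTori:2} gives $\sigma$ and $e_\Abf\in E_Z^\sigma(\Abf_K)^{\tor}$ over $u_X(x_\Abf)$. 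Its key input is Corollary~\ref{coro:DivA(T)Surjective}, the surjectivity of $\Acal(-)_{\div}$ along surjections of tori, which yields an honest adelic point rather than a compatible family.
\item The twisted cartesian square gives $z_\Abf\in Z^\sigma(\Abf_K)$ over $x_\Abf$ with $u_Z^\sigma(z_\Abf)=e_\Abf$.
\item Since $u_Z^\sigma$ is a units morphism for $Z^\sigma$ and $\Pic(\bar Z^\sigma)=0$, Proposition~\ref{prop:UnitTorsorPreserveTor} gives $z_\Abf\in Z^\sigma(\Abf_K)^{\tor}$.
\end{enumerate}
Finally, Lemma~\ref{lemm:TorTorSubgroup} does not cover the toric case; it only says $T(\Abf_K)^{\tor,\tor}$ is a subgroup. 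The toric case is Proposition~\ref{prop:TorEqualTorTorForTori}.
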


\noindent We start with the case of tori. In doing so, we also give a very concrete description of the toric descent set of a torus.

\begin{prop} \label{prop:TorEqualTorTorForTori}
    For any $K$-torus $T$, the following are true:
    \begin{enumerate}
        \item \label{prop:TorEqualTorTorForTori:1} $\Acal(T)_{\div} = T(\Abf_K)^{\tor}/T(K)$, and

        \item \label{prop:TorEqualTorTorForTori:2} $T(\Abf_K)^{\tor} = T(\Abf_K)^{\tor,\tor}$.
    \end{enumerate}
\end{prop}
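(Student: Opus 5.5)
For \ref{prop:TorEqualTorTorForTori:1} we will identify, for each torsor $f\colon Y\to T$ under a torus $S$, the descent set $T(\Abf_K)^f$ modulo $T(K)$. By the argument in the proof of Lemma~\ref{lemm:TorTorSubgroup} (using \cite[Theorem~A.1]{ILA2021LocalGlobal}), after possibly twisting and translating we may assume $Y=R$ is a torus sitting in an exact sequence $1\to S\to R\overset{g}{\to} T\to 1$. The twists of $f$ are then the fibres $g^{-1}(t)$ for $t\in T(K)$. Since $\H^1$ of the local fields is used only through the connecting maps, we get
\[
T(\Abf_K)^f = T(K)\, g(R(\Abf_K)),
\]
so $T(\Abf_K)^f/T(K)=g(\Acal(R))$.

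For the inclusion $\Acal(T)_{\div}\subseteq T(\Abf_K)^{\tor}/T(K)$ we apply Corollary~\ref{coro:DivA(T)Surjective} to $g$: it gives $\Acal(T)_{\div}=g(\Acal(R)_{\div})\subseteq g(\Acal(R))$. Intersecting over all $f$ yields the inclusion.

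For the reverse inclusion we test against the torsors $[n]\colon T\to T$ under $T[n]$. These are not torus torsors, so we instead use the isogeny-type sequence $1\to S\to R\to T\to 1$ with $R=T\times T$, $S$ the graph-type subtorus, and $R\to T$ given by $(a,b)\mapsto a^n$. This does not work, because its image is all of $T$. The correct approach is to use multiplicative groups, since Proposition~\ref{prop:TorEqualMult} gives $T(\Abf_K)^{\tor}=T(\Abf_K)^{\mult}\subseteq T(\Abf_K)^{[n]}$. The latter equals $T(K)\cdot nT(\Abf_K)$, because the twists of $[n]$ are the fibres $[n]^{-1}(t)$ for $t\in T(K)$. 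Hence
\[
T(\Abf_K)^{\tor}/T(K)\subseteq \bigcap_n n\Acal(T)=\Acal(T)_{\infty-\div}=\Acal(T)_{\div}
\]
by Proposition~\ref{prop properties adelic spaces}~\ref{prop properties adelic spaces: div = infity div}. This proves \ref{prop:TorEqualTorTorForTori:1}.

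For \ref{prop:TorEqualTorTorForTori:2} the inclusion $\supseteq$ is formal, so we take $a\in T(\Abf_K)^{\tor,\tor}$. By the formula in the proof of Lemma~\ref{lemm:TorTorSubgroup}, for each sequence $1\to S\to R\overset{g}{\to}T\to1$ we have $a\in T(K)\,g(R(\Abf_K)^{\tor})$. By \ref{prop:TorEqualTorTorForTori:1} applied to $R$, this says $[a]\in g(\Acal(R)_{\div})\subseteq\Acal(T)_{\div}$, which is what we want.

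The only subtle point is the twist-description of $T(\Abf_K)^f$. The step $T(\Abf_K)^{\tor}\subseteq T(K)\,nT(\Abf_K)$ relies on Proposition~\ref{prop:TorEqualMult}, whose statement does not need $\Pic$ hypotheses, so we use it freely. If one wants to avoid it, one can realise $[n]$ via an embedding $T[n]\hookrightarrow Q$ and argue with Lemma~\ref{lemma local lift implies adelic lift}, but invoking Proposition~\ref{prop:TorEqualMult} is the cleaner route.
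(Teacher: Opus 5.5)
\paragraph{Part (2): wrong inclusion proved.} Your proof of part~(2) has a genuine gap: it establishes only the formal inclusion. You correctly observe that $T(\Abf_K)^{\tor,\tor}\subseteq T(\Abf_K)^{\tor}$ is formal. But you then start from $a\in T(\Abf_K)^{\tor,\tor}$ and conclude $[a]\in\Acal(T)_{\div}$, i.e.\ $a\in T(\Abf_K)^{\tor}$, which is the same inclusion again. The content of (2) is $T(\Abf_K)^{\tor}\subseteq T(\Abf_K)^{\tor,\tor}$: for $a\in T(\Abf_K)^{\tor}$ and every torsor $f\colon Y\to T$ under a torus $S$, you must find a twist with $a\in f^{\sigma}(Y^{\sigma}(\Abf_K)^{\tor})$.

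Your ingredients suffice if run in the other direction, which is what the paper does. By (1), $[a]\in\Acal(T)_{\div}$. By Corollary~\ref{coro:DivA(T)Surjective} applied to $g\colon R\to T$, one has $[a]=g([r_\Abf])$ with $[r_\Abf]\in\Acal(R)_{\div}$, i.e.\ $r_\Abf\in R(\Abf_K)^{\tor}$ by (1) for $R$. Hence $a\in T(K)\,g(R(\Abf_K)^{\tor})$. This set lies in $\bigcup_{\sigma}f^{\sigma}(Y^{\sigma}(\Abf_K)^{\tor})$, since some twist of $f$ is a $T(K)$-translate of $g$ and all $T(K)$-translates of $g$ are again twists.

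\paragraph{Part (1): the equality for the $[n]$-torsor.} The inclusion $T(\Abf_K)^{\tor}/T(K)\subseteq\Acal(T)_{\div}$ rests on the equality $T(\Abf_K)^{[n]}=T(K)\cdot nT(\Abf_K)$, and your justification for it is wrong. The twists of $[n]\colon T\to T$ are indexed by $\H^1(K,T[n])$, which maps onto $\H^1(K,T)[n]$; they are not indexed only by $\delta(T(K))$.
\begin{itemize}
\item A class $\sigma$ with $\iota_*\sigma\neq 0$ gives as twist a nontrivial $T$-torsor $P$, together with its $n$-th power map to $T$.
\item If $\iota_*\sigma\in\Sha^1(K,T)$, which need not vanish over these fields, then $P(\Abf_K)\neq\varnothing$.
\item The image of $P(\Abf_K)$ is a coset $t_\Abf\cdot nT(\Abf_K)$. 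It is contained in $T(K)\cdot nT(\Abf_K)$ only if $\iota_*\sigma\in\iota_*\Sha^1(K,T[n])$.
\end{itemize}
Your equality therefore amounts to a finite-level Poitou--Tate exactness that is not available. Proposition~\ref{prop properties adelic spaces}~\ref{prop properties adelic spaces: Poitou-Tate} is stated after completion precisely for this reason. Your equality $T(\Abf_K)^f=T(K)\,g(R(\Abf_K))$ has the same problem, but there you only use $\supseteq$, which does hold.

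\paragraph{Repairing part (1).} The step can be fixed within your approach; your use of Proposition~\ref{prop:TorEqualMult} is not circular. Let $d\ge 1$ kill $\H^1(K,T)$, e.g.\ the degree of a splitting field. Take $x_\Abf\in T(\Abf_K)^{\mult}\subseteq T(\Abf_K)^{[nd]}$, with matching global class $\sigma\in\H^1(K,T[nd])$. Push $\sigma$ along $T[nd]\xrightarrow{d}T[n]$. Its image in $\H^1(K,T)$ is $d\,\iota_*\sigma=0$, so it equals $\delta(t)$ for some $t\in T(K)$. Compatibility of the connecting maps, together with Lemma~\ref{lemma local lift implies adelic lift}, then gives $x_\Abf\in t\cdot nT(\Abf_K)$.

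The paper's route is shorter. \cite[Theorem~6.4]{ILA2021LocalGlobal} makes $t_\Abf$ orthogonal to $\H^2(K,\widehat{T})$. The completed Poitou--Tate sequence then directly yields $t^{(n)}\in T(K)$ with $t^{(n)}\equiv t_\Abf\pmod{nT(\Abf_K)}$.
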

\begin{proof}
    We first prove~\ref{prop:TorEqualTorTorForTori:1}. Let $t_\Abf \in T(\Abf_K)^{\tor}$. By \cite[Theorem~6.4]{ILA2021LocalGlobal}, $t_\Abf$ is orthogonal to $\Br_1(T)=\H^2(K,\widehat{T})$. It follows from Proposition~\ref{prop properties adelic spaces}~\ref{prop properties adelic spaces: Poitou-Tate} that for all $n \ge 1$, there exists $t^{(n)} \in T(K)$ with
    \[
        t^{(n)} \equiv t_\Abf \pmod{nT(\Abf_K)}.
    \]
    In particular, $[t_\Abf]$ belongs to $n \Acal(T)$. By virtue of Proposition~\ref{prop properties adelic spaces}~\ref{prop properties adelic spaces: div = infity div}, the class $[t_\Abf]$ belongs to $\Acal(T)_{\div}$. Hence,
    \[
        T(\Abf_K)^{\tor}/T(K) \subseteq \Acal(T)_{\div}.
    \]
    
    Conversely, let $t_\Abf = (t_v)_v \in T(\Abf_K)$ and assume its class $[t_\Abf] \in \Acal(T)_{\div}$. Let
    \[
        p\colon Y \longrightarrow T
    \]
    be a torsor under a $K$-torus $S$. By \cite[Theorem~A.1]{ILA2021LocalGlobal} (see also \cite[Theorem~1.2]{FILA2023Composition}), there exists an exact sequence
    \begin{equation}\label{eq extension tori}
     1 \to S \to R \xrightarrow{\pi} T \to 1
    \end{equation}
    of $K$-tori such that $Y$ admits a structure of a $K$-torsor under $R$ and the morphism $p$ is $\pi$-equivariant. Since 
    \[
        \pi_\ast[Y] = [T] = 0
    \] 
    in $\H^1(K,T)$, there exists $[\sigma] \in \H^1(K,S)$ that maps to $-[Y]$, or equivalently $R \cong Y^{\sigma}$. By Corollary~\ref{coro:DivA(T)Surjective}, there exists $r_\Abf \in R(\Abf_K)$ and $u \in T(K)$ such that $[r_\Abf] \in \Acal(R)_{\div}$ and 
    \[
        \pi(r_{\Abf}) u = t_{\Abf}.
    \]
    Let $\delta\colon T(K_v) \to \H^1(K_v,S)$ denote the connecting homomorphism, then
    \[
        [(p^{\sigma})^{-1}(t_v)] = [\pi^{-1}(t_v)] = \delta(t_v) = \delta(u)|_{K_v}
    \]
    in $\H^1(K_v,S)$ for every $v \in C^{(1)}$. Thus, $t_\Abf$ lifts to an adelic point on $Y^{\sigma + \tau } \simeq R^{\tau}$ where $\tau \in \operatorname{Z}^1(K,S)$ is a cocycle such that $[\tau] = -\delta(u)$. Therefore, $t_\Abf$ belongs to $T(\Abf_K)^{\tor}$ establishing part~\ref{prop:TorEqualTorTorForTori:1}. \par

    Let us prove~\ref{prop:TorEqualTorTorForTori:2}. Let $t_\Abf \in T(\Abf_K)^{\tor}$ and $p:Y \to T$ a torsor under a torus $S$. We might repeat the argument above to obtain an exact sequence like \eqref{eq extension tori} such that $Y$ is a $K$-torsor under $R$ together with $r_\Abf \in R(\Abf_K)$ and $u \in T(K)$ such that $[r_\Abf] \in \Acal(R)_{\div}$ and $\pi(r_{\Abf}) u = t_{\Abf}$. It is enough to conclude because $T(\Abf_K)^{\tor, \tor}$ is a subgroup of $T(\Abf_K)$ by Lemma~\ref{lemm:TorTorSubgroup}, and both $\pi(r_{\Abf})$ and $u$ belong to $T(\Abf)^{\tor, \tor}$.

\end{proof}

\noindent In order to deduce Theorem~\ref{thm:TorEqualTorTorTrivialPicardGroup} from Proposition~\ref{prop:TorEqualTorTorForTori}, we will need the following ``units torsor'' construction due to Skorobogatov.

\begin{lemm} \label{lemm:UnitTorsor}
    Let $X$ be a smooth geometrically integral variety over a field $k$. Then, the following are true:

    \begin{enumerate}
        \item \label{lemm:UnitTorsor:1} $\bar{k}[X]^\times / \bar{k}^\times$ is a finitely generated torsion-free abelian group.

        \item \label{lemm:UnitTorsor:2} Let $T$ be the $k$-torus with $\widehat{T} = \bar{k}[X]^\times / \bar{k}^\times$ and let $E_X$ be a $k$-torsor under $T$ whose class $[E_X] \in \H^1(k,T) \cong \Ext^1_k\bigl(\widehat{T},\bar{k}^\times\bigr)$ is the inverse class of the {\em units extension}
        \[
            1 \longrightarrow \bar{k}^\times \longrightarrow \bar{k}[X]^\times \longrightarrow \widehat{T} \longrightarrow 0
        \]
        given by Rosenlicht's lemma; {\em cf.} \cite[Proposition~3]{Rosenlicht1957Rationality}.
        Then, there exists a morphism
        \[
            u \colon X \longrightarrow E_X
        \]
        that induces an isomorphism of extensions
        \[
        \xymatrix{
            1 \ar[r] & \bar{k}^\times \ar@{=}[d] \ar[r] & \bar{k}[E_X]^\times \ar[d]^{u^\ast} \ar[r] & \widehat{T} \ar@{=}[d] \ar[r] & 0 \\ 
            1 \ar[r] & \bar{k}^\times \ar[r] & \bar{k}[X]^\times \ar[r] & \widehat{T} \ar[r] & 0.
        }
        \]

        We refer to $E_X$ as a {\em units torsor} and to $u$ as a {\em units morphism} for $X$.
        
        \item \label{lemm:UnitTorsor:3} The morphism $u$ from~\ref{lemm:UnitTorsor:2} is unique up to translation by a $k$-rational point $t \in T(k)$. 
    \end{enumerate}
\end{lemm}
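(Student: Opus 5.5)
For part~\ref{lemm:UnitTorsor:1}, I would reduce to the case $k = \bar{k}$ and use a smooth compactification, or, to avoid resolution of singularities in positive characteristic, a normal compactification $X \subseteq \bar{X}^c$. The divisor map
\[
\bar{k}[X]^\times/\bar{k}^\times \longrightarrow \bigoplus_{D} \Zbf D
\]
goes to the free abelian group on the finitely many prime divisors $D$ of $\bar{X}^c \setminus X$, and sends $f$ to $\div(f)$. It is injective, because a unit with trivial divisor on the normal proper integral variety $\bar{X}^c$ is a regular function on a proper integral variety, hence constant. Therefore $\bar{k}[X]^\times/\bar{k}^\times$ embeds in a finitely generated free group. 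This makes it finitely generated and torsion-free, so it is the character lattice of a $k$-torus $T$. Alternatively one can argue directly with torsion: if $f^n \in \bar{k}^\times$, then $f$ takes finitely many values on the connected variety $\bar{X}$, so $f$ is constant.

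For part~\ref{lemm:UnitTorsor:2}, I would follow Sansuc / Colliot-Thélène--Sansuc. Consider the torsor $E_X$ whose class is minus the class of the units extension. By Example~\ref{exam:TorsorUnderTorus}, the extension $1 \to \bar{k}^\times \to \bar{k}[E_X]^\times \to \widehat{T} \to 0$ represents $-[E_X]$, so it is isomorphic (as an extension, with identity on the ends) to the units extension of $X$. Choose such an isomorphism $\theta\colon \bar{k}[E_X]^\times \to \bar{k}[X]^\times$ of Galois modules. The key step is to realize $\theta$ by a morphism. Over $\bar{k}$, fix a point $e \in E_X(\bar{k})$, which gives $\bar{E}_X \cong \bar{T}$ and $\bar{k}[\bar{E}_X] = \bar{k}[\widehat{T}]$, the group algebra generated by the units. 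Then $\theta$ gives a $\bar{k}$-algebra homomorphism $\bar{k}[\bar{E}_X] \to \bar{k}[\bar{X}]$ sending each unit $\chi$ to $\theta(\chi)$. This is well defined, because the units form a basis-up-to-scalars of the group algebra and $\theta$ is multiplicative and the identity on scalars. It defines $\bar{u}\colon \bar{X} \to \bar{E}_X$ with $\bar{u}^\ast = \theta$ on units. Since $\theta$ is Galois equivariant, $\bar{u}^\ast$ commutes with the Galois action on the coordinate rings. Hence $\bar{u}$ descends to a $k$-morphism $u$ by Galois descent for morphisms of quasi-projective, in fact affine-target, varieties. I expect this descent and well-definedness step to be the main point to check carefully, though it is standard.

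For part~\ref{lemm:UnitTorsor:3}, let $u, u'$ be two units morphisms. Then $u'^\ast$ and $u^\ast$ both induce the identity on the quotient $\widehat{T}$ and on $\bar{k}^\times$. So for each unit $g \in \bar{k}[E_X]^\times$ we have $u'^\ast g = c(\chi_g)\, u^\ast g$, where $c\colon \widehat{T} \to \bar{k}^\times$ is a Galois-equivariant homomorphism. Such a $c$ is exactly a point $t \in T(k) = \Hom_{\Gamma}(\widehat{T},\bar{k}^\times)$. Using $g(t\cdot e) = \chi_g(t) g(e)$, we get $u'^\ast = (t\cdot u)^\ast$ on units. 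Since units generate $\bar{k}[\bar{E}_X]$, this gives $u' = t \cdot u$. Conversely, any translate of $u$ by $t \in T(k)$ is again a units morphism, since the induced map on units changes only by scalars and still induces the identity on $\widehat{T}$ and $\bar{k}^\times$.
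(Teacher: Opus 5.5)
Your proposal is correct. Part~1 is the same argument as in the paper: you embed $\bar{k}[X]^\times/\bar{k}^\times$ into the free group on the boundary divisors of a normal compactification.

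The differences are in parts~2 and~3.

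\textbf{Part~2.} The paper gives no argument and simply cites \cite[Lemma~2.4.4]{Sko2001TorsorsAndRationalPoints}. You supply the construction yourself, in three steps:
\begin{enumerate}
\item choose an isomorphism of extensions $\theta$, using Example~\ref{exam:TorsorUnderTorus} to see that the two classes agree;
\item extend $\theta$ $\bar{k}$-linearly to an algebra homomorphism $\bar{k}[\bar{E}_X]\to\bar{k}[\bar{X}]$, using that units span the group algebra;
\item descend by Galois equivariance.
\end{enumerate}
This is essentially the standard proof behind the cited lemma. Your well-definedness check is fine, since the extension agrees with $\theta$ on every unit and so does not depend on the chosen base point.

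\textbf{Part~3.} The paper uses the isomorphism $E_X\times T\cong E_X\times E_X$ to write $u_2=\tau\cdot u_1$ for a morphism $\tau\colon X\to T$. It then shows $\tau$ is constant, because each $\chi\circ\tau$ is a unit with trivial image in $\widehat{T}$ and characters separate points. You instead extract the Galois-equivariant homomorphism $c\colon\widehat{T}\to\bar{k}^\times$ given by the ratios $u'^\ast g/u^\ast g$ and read it as a point $t\in T(k)$. You then conclude because $u'^\ast$ and $(t\cdot u)^\ast$ agree on the spanning set of units. Your version makes the $k$-rationality of $t$ automatic. It also yields the converse, that translates of a units morphism are again units morphisms, which the paper does not state.

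One minor caveat: in positive characteristic your descent step should use a separable closure for $\bar{k}$. Invariants under the Galois action on an algebraic closure only recover the perfect closure of $k$. Since $T$ is smooth, $E_X$ still has points over a separable closure, so nothing else changes. This is irrelevant for the paper, where $K$ has characteristic zero.
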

\begin{proof}
    Claim~\ref{lemm:UnitTorsor:1} follows easily if $X$ can be embedded as a dense open subset of a normal proper variety $X'$, because in this case we have an inclusion 
    \[
        \bar{k}[X]^\times / \bar{k}^\times \longhookrightarrow \Div_{\infty}(\bar{X}'),
    \]
    where $\Div_{\infty}(\bar{X}')$ denotes the group of Weil divisors on $\bar{X'}$ supported outside $\bar{X}$. Claim~\ref{lemm:UnitTorsor:2} corresponds to \cite[Lemma~2.4.4]{Sko2001TorsorsAndRationalPoints}. 
    \par

    Let us prove~\ref{lemm:UnitTorsor:3}. Let $u_1$ and $u_2$ be two units morphisms $X \to E_X$. Note that the~morphism
    \[
        A:E_X \times T \longrightarrow E_X \times E_X, \qquad (e,t) \mapsto (e,t \cdot e)
    \]
    is an isomorphism. Then, we obtain a morphism
    \[
        \tau \colon X \longrightarrow T
    \]
    such that $u_2 = \tau \cdot u_1$ as $\operatorname{pr_2} \circ A^{-1}$ precomposed with $(u_1,u_2)\colon X \to E_X \times E_X$. We claim that $\tau$ is constant. Indeed, let $\chi\colon T \to \Gbf_{\m}$ be any character and let $f \in \bar{k}[E_X]^\times$ be an invertible function lifting $\chi$, i.e.
    \[
        f(t \cdot e) = \chi(t) \cdot f(e).
    \]
    Thus $u_2^\ast f = (\chi \circ \tau) \cdot (u_1^\ast f)$. But $u_1^\ast f$ and $u_2^\ast f$ have the same image in $\widehat{T}$ (namely $\chi$), hence
    \[
        \chi \circ \tau \colon X \longrightarrow \Gbf_{\m}
    \]
    is a constant for any character $\chi \in \widehat{T}$. Since these characters separate the geometric points of $T$, the morphism $\tau$ is constant. Its image is then a $k$-rational point $t \in T(\bar{k})$ such that $u_2(x) = t \cdot u_1(x)$ for all $x \in X$, which concludes the proof.  
\end{proof}

\noindent The following result is a key step in the proof Theorem~\ref{thm:TorEqualTorTorTrivialPicardGroup} that is also relevant for the proof of Theorem~\ref{thm:TorEqualTorTor}.

\begin{prop} \label{prop:UnitTorsorPreserveTor}
    Let $X$ be a smooth geometrically integral variety over $K$ with $\Pic(\bar{X}) = 0$. Let $T$ be the $K$-torus with $\widehat{T} = \bar{K}[X]^\times/\bar{K}^\times$ and let
    \[
        u\colon X \longrightarrow E_X
    \]
    be a units morphism for $X$; {\em cf.} Lemma~\ref{lemm:UnitTorsor}. Then, we have 
    \[
        u^{-1}(E_X(\Abf_K)^{\tor}) = X(\Abf_K)^{\tor}.
    \]
\end{prop}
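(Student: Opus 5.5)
The inclusion $X(\Abf_K)^{\tor} \subseteq u^{-1}(E_X(\Abf_K)^{\tor})$ is immediate from functoriality of the toric descent set, since $u(X(\Abf_K)^{\tor}) \subseteq E_X(\Abf_K)^{\tor}$. The content is therefore the reverse inclusion. Fix $x_\Abf \in X(\Abf_K)$ with $u(x_\Abf) \in E_X(\Abf_K)^{\tor}$, and let $g\colon Z \to X$ be a torsor under a $K$-torus $S$. The plan is to show that, up to a Galois twist by a class in $\H^1(K,S)$, the torsor $Z$ is pulled back from a torsor over $E_X$ along $u$. Then a lift of $u(x_\Abf)$ to a twist of that torsor over $E_X$ produces, via the fibre product, a lift of $x_\Abf$ to the corresponding twist of $Z$.

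\textbf{Step 1: torsors over $X$ come from $E_X$ up to twist.} By Proposition~\ref{prop:ExtendedType}, $Z$ is determined up to twist by its extended type $\lambda = \XType([Z]) \in \Hom_{\Dcal^+(K)}(\widehat{S}, \UPic(\bar{X}))$. Since $\Pic(\bar{X}) = 0$, we have $\UPic(\bar{X}) \simeq \bar{K}[X]^\times/\bar{K}^\times[1] = \widehat{T}[1]$. By Example~\ref{exam:TorsorUnderTorus}, also $\UPic(\bar{E}_X) \simeq \widehat{T}[1]$. Lemma~\ref{lemm:UnitTorsor}~\ref{lemm:UnitTorsor:2} says that $u^\ast$ identifies the units of $E_X$ and of $X$ compatibly with $\widehat{T}$, so $u^\ast\colon \UPic(\bar{E}_X) \to \UPic(\bar{X})$ is an isomorphism in $\Dcal^+(K)$. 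Hence $\lambda = u^\ast \mu$ for a unique $\mu\colon \widehat{S} \to \UPic(\bar{E}_X)$. Functoriality of the obstruction gives $\ob(\lambda) = \ob(\mu)$ in $\H^2(K,S)$, and $\ob(\lambda) = 0$ because $Z$ exists. So there is a torsor $W \to E_X$ under $S$ of extended type $\mu$. By the exactness in Proposition~\ref{prop:ExtendedType}, $u^\ast[W]$ and $[Z]$ differ by the image of some $[\sigma] \in \H^1(K,S)$, and thus $Z^{\sigma} \simeq W \times_{E_X} X$ for a suitable cocycle $\sigma$.

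\textbf{Step 2: lifting.} Since $u(x_\Abf) \in E_X(\Abf_K)^{\tor}$, there is $[\tau] \in \H^1(K,S)$ such that $u(x_\Abf)$ lifts to $w_\Abf \in W^{\tau}(\Abf_K)$. The twists are compatible with pullback: $(W \times_{E_X} X)^{\tau} = W^{\tau} \times_{E_X} X$. Therefore the pair $(w_\Abf, x_\Abf)$ is an adelic point of $W^{\tau} \times_{E_X} X \simeq Z^{\sigma+\tau}$ lying over $x_\Abf$. For this to be an adelic point rather than merely a family of local points, one can either use the universal property of the fibre product on integral models at almost all places, or invoke Lemma~\ref{lemma local lift implies adelic lift}. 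This gives $x_\Abf \in X(\Abf_K)^{g}$, and intersecting over all $g$ yields $x_\Abf \in X(\Abf_K)^{\tor}$.

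\textbf{Main obstacle.} The delicate point is Step~1, namely verifying that $u^\ast$ induces an isomorphism $\UPic(\bar{E}_X) \to \UPic(\bar{X})$ in the derived category, and not merely on cohomology groups. Both complexes have cohomology concentrated in degree $-1$, so each is canonically the shift of its $\H^{-1}$. It therefore suffices to check that $u^\ast$ is an isomorphism on $\H^{-1}$, which is exactly the commutative diagram of Lemma~\ref{lemm:UnitTorsor}~\ref{lemm:UnitTorsor:2}. One must also track the sign and twist conventions in the exactness statement of Proposition~\ref{prop:ExtendedType}, but these do not affect the conclusion, since any class in $\H^1(K,S)$ is absorbed into the union over twists.
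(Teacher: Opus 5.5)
Your argument is correct, and its skeleton is the same as the paper's: realize the given torsor over $X$ as a pullback of a torsor over $E_X$, lift $u(x_\Abf)$ to a twist of the latter, and take the fibre product. The difference lies in how you obtain the pullback description.

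The paper imports it from \cite[Theorem~8.10]{HS2013Descent}, which yields units morphisms $u_X$ and $u_Y$ for $X$ and for the torsor fitting in a cartesian square. Because that theorem supplies its own $u_X$, the paper must first invoke Lemma~\ref{lemm:UnitTorsor}~\ref{lemm:UnitTorsor:3} to see that the condition $u(x_\Abf)\in E_X(\Abf_K)^{\tor}$ does not depend on the choice of units morphism.

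You derive the pullback statement internally from Proposition~\ref{prop:ExtendedType} and the quasi-isomorphism $u^\ast\colon\UPic(\bar E_X)\to\UPic(\bar X)$. This lets you work with the given $u$ directly, and the extra twist is harmless. You could even drop the twist. The morphism of triangles \eqref{eq:TriangleKDAndUPic} induced by $u$ is the identity on $\bar K^\times[1]$ and an isomorphism on $\UPic$, so it is also an isomorphism on $\KD$. Hence $u^\ast\colon\H^1(E_X,S)\to\H^1(X,S)$ is bijective, and every $X$-torsor under $S$ is literally pulled back from $E_X$.

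What the paper's route buys is the extra information that $E_Y$ is a units torsor for $Y$ and $u_Y$ a units morphism. This is reused in the proof of Theorem~\ref{thm:TorEqualTorTorTrivialPicardGroup}.

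One technical caution: $u$ need not be dominant. For $X=\mathbf{P}^1\setminus\{0,1,\infty\}$ one may take $u=(z,1-z)\colon X\to\mathbf{G}_m^2$, whose image is a curve. So $u^\ast$ on $\UPic$ must be understood via the derived-category functoriality underlying Proposition~\ref{prop:ExtendedType}, not as termwise pullback of rational functions and divisors. Your check on $\H^{-1}$ remains valid in that setting.
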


\begin{proof}
     The contention $u^{-1}(E_X(\Abf_K)^{\tor}) \supseteq X(\Abf_K)^{\tor}$ is clear by functoriality of the toric descent set. Let $x_\Abf \in X(\Abf_K)$. Then, it is enough to show that $x_\Abf \in X(\Abf_K)^{\tor}$ whenever $u(x_\Abf) \in E_X(\Abf_K)^{\tor}$. Note that Lemma~\ref{lemm:UnitTorsor}~\ref{lemm:UnitTorsor:3} ensures that the property $u(x_\Abf) \in E_X(\Abf_K)^{\tor}$ is independent of the unit torsor $u$ chosen. Let
    \[
        Y \longrightarrow X
    \]
    be a torsor under a $K$-torus $T$. Let $R$ and $S$ be the $K$-tori with character lattices $\widehat{R} = \bar{K}[X]^\times/\bar{K}^\times$ and $\widehat{S} = \bar{K}[Y]^\times/\bar{K}^\times$ respectively. Since $\Pic(\bar{X}) = 0$, we have an exact~sequence
    \[
        1 \longrightarrow T \longrightarrow S \longrightarrow R \longrightarrow 1
    \]
    which is the Cartier dual of part of Sansuc’s exact sequence \cite[(6.10.1)]{Sansuc1981Arithmetique}. By virtue of \cite[Theorem~8.10]{HS2013Descent}, we can choose respective units morphisms $u_X$ and $u_Y$ for $Y$ and $X$ that fit in a catersian~square
    \begin{equation}\label{eq:UnitTorsorPreserveTor:1}
        \xymatrix{
            Y \ar[rr]^{u_Y} \ar[d] && E_Y \ar[d] \\
            X \ar[rr]^{u_X} && E_X,
        }
    \end{equation}
    where the right vertical arrow is a torsor under $T$ and where $u_Y$ is $T$-equivariant.
    \par
    
    Under the assumption $u_X(x_\Abf) \in E_X(\Abf_K)^{\tor}$, there exists $[\sigma] \in \H^1(K,T)$ such that $u_X(x_\Abf)$ lifts to an adelic point $e_\Abf \in E_Y^{\sigma}(\Abf_K)$. Twisting \eqref{eq:UnitTorsorPreserveTor:1} by $\sigma$ yields a catersian~square
    \[
        \xymatrix{
            Y^{\sigma} \ar[rr]^{u_{Y}^{\sigma}} \ar[d] && E_Y^\sigma \ar[d] \\
            X \ar[rr]^{u_X} && E_X.
        }
    \]
    The pair $(x_\Abf, e_\Abf)$ therefore defines an adelic point $y_\Abf \in Y^{\sigma}(\Abf_K)$ lifting $x_\Abf$. This shows that $x_\Abf \in X(\Abf_K)^{\tor}$ and concludes the proof.
\end{proof}

\begin{proof}[Proof of Theorem~\ref{thm:TorEqualTorTorTrivialPicardGroup}]
    In view of Theorem~\ref{thm:TorEqualMult}, we need to show that 
    \[
        X(\Abf_K)^{\tor} \subseteq X(\Abf_K)^{\tor,\tor}.
    \]
    We may assume $X(\Abf_K)^{\tor} \neq \varnothing$. Let $x_\Abf \in X(\Abf_K)^{\tor}$ and let consider a $K$-torsor
    \[
        Y \longrightarrow X
    \]
    under a $K$-torus $T$. We need to provide $[\sigma] \in \H^1(K,T)$ such that $x_\Abf$ lifts to $Y^{\sigma}(\Abf_K)^{\tor}$.
    \par

    Retake the notations $R$, $S$, $E_X$, $E_Y$, $u_X$, and $u_Y$ from the proof of Proposition~\ref{prop:UnitTorsorPreserveTor}. We have $u_X(x_\Abf) \in E_X(\Abf_K)^{\tor}$ by functoriality. By \cite[Theorem~6.4]{ILA2021LocalGlobal}, this implies in particular that $E_X(\Abf_K)^{\Be(E_X)} \neq \varnothing$. By \cite[Corollaire~8.3]{CTH2015DualCt} and \cite[Th\'eor\`eme~5.2]{Izquierdo2019Dualite}, one has $E_X(K) \neq \varnothing$, or $E_X \cong R$. It follows from Proposition~\ref{prop:TorEqualTorTorForTori} that $u_X(x_{\Abf}) \in E_X(\Abf_K)^{\tor,\tor}$. Therefore, the exists $[\sigma] \in \H^1(K,T)$ such that $u_X(x_{\Abf})$ lifts to an adelic point $e_\Abf \in E_Y^{\sigma}(\Abf_K)^{\tor}$. 
    \par
    
    Observe that twisting by $\sigma$ gives an isomorphism of extensions
    \[
        \xymatrix{
            1 \ar[r] & \bar{K}^\times \ar@{=}[d] \ar[r] & \bar{K}[Y^\sigma]^\times \ar[d]^{(u_Y^\sigma)^\ast} \ar[r] & \widehat{T} \ar@{=}[d] \ar[r] & 0 \\ 
            1 \ar[r] & \bar{K}^\times \ar[r] & \bar{K}[E_{Y}^\sigma]^\times \ar[r] & \widehat{T} \ar[r] & 0.
        }
    \]
    Here, the class in $ \Ext^{1}_K(\widehat{T},\bar{K}^\times)$ of the bottom row is exactly
    \[
        -[E_Y^\sigma] = -[E_Y] + \sigma
    \]
    It follows that $E_{Y}^\sigma$ is a units torsor and $u_Y^\sigma$ is a units morphism for $Y^{\sigma}$. Since $\Pic(\bar{X}) = \Pic(\bar{T}) = 0$, we have $\Pic(\bar{Y}^\sigma) = 0$ by Sansuc's exact sequence \cite[(6.10.1)]{Sansuc1981Arithmetique}. By the same argument from the last paragraph of the proof of Proposition~\ref{prop:UnitTorsorPreserveTor}, the pair $(x_\Abf, e_\Abf)$ defines an adelic point $y_\Abf \in Y^{\sigma}(\Abf_K)$ lifting $x_\Abf$. This implies that we have
    \[
        u_{Y}^{\sigma}(y_\Abf) = e_\Abf \in E^{\sigma}_Y(\Abf_K)^{\tor}.
    \]
     Now, the conclusion of Proposition~\ref{prop:UnitTorsorPreserveTor}, applied to $Y^\sigma$, shows that $y_\Abf \in Y^{\sigma}(\Abf_K)^{\tor}$. Thus, $x_\Abf$ belongs to $X(\Abf_K)^{\tor,\tor}$, which concludes the proof.
\end{proof}


\subsection{Varieties with finitely generated \texorpdfstring{$\Pic(\bar{X})$}{geometric Picard group}}

In this subsection, we establish Theorem~\ref{thm:TorEqualTorTor} in full generality. Fix a smooth geometrically integral variety $X$ over $K$ and assume $\Pic(\bar{X})$ finitely generated. Let us briefly sketch the argument: by Theorem~\ref{thm:TorEqualMult}, it remains to establish the inclusion
\[
    X(\Abf_K)^{\tor} \subseteq X(\Abf_K)^{\tor,\tor}.
\]
We need to show that any adelic point $x_\Abf \in X(\Abf_K)^{\tor}$ is also an element of $X(\Abf_K)^{\tor,\tor}$.

First, we find a torsor
\[
    Y \longrightarrow X
\]
under a quasitrivial torus $Q$, with $\Pic(\bar{Y}) = 0$. We then have a commutative diagram
\[
    \xymatrix{
        Y \ar[rr] \ar[d] && E_Y \ar[d] \\
        X \ar[rr] && E_X,
    }
\]
where the horizontal arrows are units morphisms; see Lemma~\ref{lemm:UnitTorsor}.
\par

Next, we follow the argument from Corollary~\ref{coro:QtBeNonempty}. Namely, we apply \cite[Theorem~6.4]{ILA2021LocalGlobal}, which exploits the nonvacuity of $X(\Abf_K)^{\tor}$, to show that $E_Y(\Abf_K)^{\Be(E_Y)} \neq \varnothing$. Since $E_Y$ is a $K$-torsor under a torus, this implies $E_Y(K) \neq \varnothing$ and {\em a fortiori} $E_X(K) \neq \varnothing$. The above diagram hence becomes
\[
    \xymatrix{
        Y \ar[rr] \ar[d] && S \ar[d] \\
        X \ar[rr] && T,
    }
\]
where the right vertical arrow is a morphism of tori.

We then take a lift $y_\Abf \in Y(\Abf_K)$ of $x_\Abf$ and proceed to find $q_\Abf \in Q(\Abf_K)$ such that
\[
    q_\Abf \cdot y_\Abf \in Y(\Abf_K)^{\tor}.
\]
Since $\Pic(\bar{Y}) = 0$, it then follows from Theorem~\ref{thm:TorEqualTorTorTrivialPicardGroup} that $q_\Abf \cdot y_\Abf \in Y(\Abf_K)^{\tor,\tor}$ and hence $x_\Abf \in X(\Abf_K)^{\tor,\tor}$, which then concludes the proof.
\par

To ensure the condition $q_\Abf \cdot y_\Abf \in Y(\Abf_K)^{\tor}$, it suffices to show that its image $s_\Abf \in S(\Abf_K)$ belongs to $S(\Abf_K)^{\tor}$, by virtue of Proposition~\ref{prop:UnitTorsorPreserveTor}. Then, Lemma~\ref{prop:TorEqualTorTorForTori} translates this into a completely arithmetic condition on the adelic space $\Acal(S) = S(\Abf_K)/S(K)$. To control this, we will need to cover both $Y$ and $S$ by a larger torsor $Z$ under a group of multiplicative type, and then lift $x_\Abf$ to a Galois twist of $Z$. This requires Theorem~\ref{thm:TorEqualMult}.

\medskip

\begin{lemm} \label{lemm:TorEqualTorTor:ExtentedTypeCoveringPicX}
    If $X(\Abf_K)^{\qt,\Be} \neq \varnothing$, then there exists a torsor
    \[
        Y \longrightarrow X
    \]
    under a quasitrivial $K$-torus $Q$ whose classical type
    \[
        \widehat{Q} \longrightarrow \Pic(\bar{X})
    \]
    is surjective. In particular, we have $\Pic(\bar{Y}) = 0$.
\end{lemm}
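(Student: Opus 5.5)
The plan is to construct the required type directly in the derived category and then realize it by Corollary~\ref{coro:QtBeNonempty}. Since $\Pic(\bar{X})$ is finitely generated, with continuous Galois action (it is a discrete Galois module), I would choose a finite Galois extension $L/K$ such that $\Gamma_K$ acts on $\Pic(\bar{X})$ through $\mathrm{Gal}(L/K)$. Pick generators $p_1,\dots,p_r$ of $\Pic(\bar{X})$ and, for each $i$, a finite extension $L_i/K$ (a subfield of $L$) with $p_i$ fixed by $\Gamma_{L_i}$. Put $E=\prod_i L_i$ and $Q=\Res_{E/K}\Gbf_{\m,E}$. Then $\widehat{Q}=\bigoplus_i \Zbf[\Gamma_K/\Gamma_{L_i}]$ is a permutation module, and the assignment sending the class of $1$ in the $i$-th summand to $p_i$ defines a surjective $\Gamma_K$-homomorphism $\mu\colon\widehat{Q}\to\Pic(\bar{X})$.

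Next I would lift $\mu$ to a morphism $\lambda\colon\widehat{Q}\to\UPic(\bar{X})$ in $\Dcal^+(K)$ with $\H^0(\lambda)=\mu$. The complex $\UPic(\bar{X})$ has cohomology in degrees $-1$ and $0$, so there is a triangle
\[
\H^{-1}(\UPic(\bar{X}))[1]\to\UPic(\bar{X})\to\Pic(\bar{X})\to\H^{-1}(\UPic(\bar{X}))[2].
\]
The obstruction to lifting $\mu$ therefore lies in $\Ext^2_K(\widehat{Q},\bar{K}[X]^\times/\bar{K}^\times)$. By Shapiro's lemma this group is $\bigoplus_i\H^2(L_i,\widehat{T})$, where $\widehat{T}=\bar{K}[X]^\times/\bar{K}^\times$ is a lattice by Lemma~\ref{lemm:UnitTorsor}. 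This need not vanish, and that is the main obstacle.

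To get around it, I would enlarge $Q$ so that the obstruction is killed. For each $i$, the obstruction component is a class in $\H^2(L_i,\widehat{T})$, which is torsion and killed by restriction to some finite extension $L_i'/L_i$. Replacing $L_i$ by $L_i'$ is compatible with $\mu$: the map from $\Zbf[\Gamma_K/\Gamma_{L_i'}]$ sending $1\mapsto p_i$ factors through the corresponding map for $L_i$ via the natural projection, so the obstruction pulls back to the restricted class, which vanishes. After this replacement $\mu$ is still surjective and lifts to $\lambda$. Because the obstruction map is additive over the direct sum, it suffices to handle each summand separately.

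Now $X(\Abf_K)^{\qt,\Be}\neq\varnothing$, so Corollary~\ref{coro:QtBeNonempty} gives a torsor $Y\to X$ under $Q$ with $\XType=\lambda$. Its classical type is $\H^0(\lambda)=\mu$, which is surjective. Finally, Sansuc's exact sequence \cite[(6.10.1)]{Sansuc1981Arithmetique} contains the portion
\[
\widehat{Q}\xrightarrow{\mu}\Pic(\bar{X})\to\Pic(\bar{Y})\to\Pic(\bar{Q})=0,
\]
so surjectivity of $\mu$ forces $\Pic(\bar{Y})=0$.
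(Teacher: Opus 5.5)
Your proof is correct. Its skeleton matches the paper's: build $\lambda\colon\widehat{Q}\to\UPic(\bar X)$ with surjective $\H^0(\lambda)$, realize it via Corollary~\ref{coro:QtBeNonempty}, and conclude with Sansuc's sequence. The difference lies entirely in how $\lambda$ is produced.

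The paper never meets a lifting obstruction. It takes a finitely generated Galois-stable subgroup $\Lcal\subseteq\Div(\bar X)$ surjecting onto $\Pic(\bar X)$ and a permutation module $\widehat{Q}\twoheadrightarrow\Lcal$. Since $\widehat{Q}$ sits in degree $0$, the map $\widehat{Q}\to\Div(\bar X)$ is automatically a morphism of complexes into $[\bar K(X)^\times/\bar K^\times\to\Div(\bar X)]$, so $\lambda$ exists for free.

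You instead start from a surjection $\mu$ onto $\Pic(\bar X)$. You identify the obstruction to lifting it through the truncation triangle as a class in $\bigoplus_i\H^2(L_i,\widehat{T})$ via Shapiro's lemma, and kill it by restricting to finite extensions $L_i'/L_i$. This is legitimate: positive-degree Galois cohomology classes with discrete coefficients die on some open subgroup, and precomposing with $\Zbf[\Gamma_K/\Gamma_{L_i'}]\to\Zbf[\Gamma_K/\Gamma_{L_i}]$ corresponds to restriction under Shapiro.

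The two routes are closely related. Taking $L_i'$ to be a field of definition of a divisor representing $p_i$ is a concrete way of killing your obstruction, since $1\mapsto D_i$ then gives an honest chain map; this is what the paper's choice does implicitly. The paper's route is shorter and avoids the derived-category bookkeeping. Yours makes explicit why a naive permutation cover of $\Pic(\bar X)$ need not lift to $\UPic(\bar X)$, and how much it must be enlarged. Your initial choice of a Galois $L$ controlling the action on $\Pic(\bar X)$ is not needed.
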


\begin{proof}
    Take a finitely generated subgroup of $\Div(\bar{X})$ that surjects onto $\Pic(\bar{X})$, and let 
    \[
        \Lcal \subseteq \Div(\bar{X})
    \]
    be its Galois orbit, which is again finitely generated by continuity of the Galois action.~Let
    \[
        \widehat{Q} \longtwoheadrightarrow \Lcal
    \]
    be a surjection from a permutation Galois module $\widehat{Q}$. This gives rise to a morphism of~complexes
    \[
        \lambda \colon \widehat{Q} \longrightarrow [\bar{K}(X)^\times/\bar{K}^\times \longrightarrow \Div(\bar{X})] \simeq \UPic(\bar{X})
    \]
    such that the composite $\widehat{Q} \xrightarrow{\lambda} \UPic(\bar{X}) \to \Pic(\bar{X})$ is surjective.
    \par
    
    Now, let $Q$ be the $K$-torus with character lattice $\widehat{Q}$. Since $X(\Abf_K)^{\qt,\Be} \neq \varnothing$, it follows from Corollary~\ref{coro:QtBeNonempty} that there exists a torsor $Y \to X$ of extended type $\lambda$. Sansuc's exact sequence \cite[(6.10.1)]{Sansuc1981Arithmetique} for this torsor reads
    \begin{equation} \label{eq:TorEqualTorTor:SansucSequence}
        0 \longrightarrow \bar{K}[X]^\times/\bar{K}^\times \longrightarrow \bar{K}[Y]^\times/\bar{K}^\times \longrightarrow \widehat{Q} \longrightarrow \Pic(\bar{X}) \longrightarrow \Pic(\bar{Y}) \longrightarrow \Pic(\bar{Q}) = 0
    \end{equation}
    Here, the morphism $\widehat{Q} \to \Pic(\bar{X})$ is the classical type of $Y$, that is, the composite
    \[
        \widehat{Q} \overset{\lambda}{\longrightarrow} \UPic(\bar{X}) \longrightarrow \Pic(\bar{X}).
    \]
    Therefore, this type is surjective by construction. Exactness of the sequence \eqref{eq:TorEqualTorTor:SansucSequence} shows that $\Pic(\bar{Y}) = 0$.
\end{proof}

\noindent Under the assumption $X(\Abf_K)^{\tor} \neq \varnothing$, it follows from \cite[Remark~6.3 and Theorem~6.4]{ILA2021LocalGlobal} that $X(\Abf_K)^{\qt,\Be} \neq \varnothing$. A torsor
\[
    Y \overset{Q}{\longrightarrow} X
\]
satisfying the condition of Lemma~\ref{lemm:TorEqualTorTor:ExtentedTypeCoveringPicX} therefore exists. Let $M$ be $K$-group of multiplicative type with $\widehat{M} = \Pic(\bar{X})$, and let $T$ and $S$ be $K$-tori with character lattices $\widehat{T} = \bar{K}[X]^\times/\bar{K}^\times$ and $\widehat{S} = \bar{K}[Y]^\times/\bar{K}^\times$ respectively. Sansuc’s exact sequence \eqref{eq:TorEqualTorTor:SansucSequence} for the torsor $Y \to X$~becomes
\[
    0 \longrightarrow \widehat{T} \longrightarrow \widehat{S} \longrightarrow \widehat{Q} \longrightarrow \widehat{M} \longrightarrow 0.
\]
Let $R \coloneq \Ker(S \to T) = \Img(Q \to S)$, which is a $K$-torus. Then, we have exact sequences
\[
    1 \longrightarrow R \longrightarrow S \overset{\tau}{\longrightarrow} T \longrightarrow 1 \qquad \text{and} \qquad  1 \longrightarrow M \longrightarrow Q \overset{\rho}{\longrightarrow} R \longrightarrow 1.
\]

\begin{lemm} \label{lemm:TorEqualTorTor:UnitTorsor}
    We can choose units morphisms for $X$ and $Y$ (see Lemma~\ref{lemm:UnitTorsor}) that  fit in a commutative square
    \[
    \xymatrix{
        Y \ar[rr]^{u} \ar[d] && E_Y \ar[d]^{\tau'} \\
        X \ar[rr] && E_X,
    }
    \]
    where $\tau'$ is a torsor under $R$ and where $u$ is $\rho$-equivariant.
\end{lemm}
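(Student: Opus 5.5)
The idea is to start from an arbitrary units morphism for $Y$ and to \emph{define} $E_X$ as a quotient of $E_Y$, rather than choosing the two units torsors independently. Fix a units morphism $u\colon Y \to E_Y$ for $Y$ as in Lemma~\ref{lemm:UnitTorsor}, so $E_Y$ is a $K$-torsor under $S$. Write $\rho_S\colon Q \to S$ for the composite $Q \xrightarrow{\rho} R \hookrightarrow S$. Its dual is the map $\widehat{S} = \bar{K}[Y]^\times/\bar{K}^\times \to \widehat{Q}$ from Sansuc's sequence \eqref{eq:TorEqualTorTor:SansucSequence}: an invertible function $g$ on $\bar{Y}$ is sent to the character $\chi_g$ of $Q$ with $g(q\cdot y) = \chi_g(q)\,g(y)$. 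The proof then has three steps: show $u$ is $\rho$-equivariant, descend the composite $Y \to E_Y/R$ to $X$, and check that the descended map is a units morphism.

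\textbf{Step 1 ($\rho$-equivariance).} Consider the two morphisms $Q \times Y \to E_Y$ given by $(q,y) \mapsto u(q\cdot y)$ and $(q,y) \mapsto u(y)$. Since $E_Y \times S \to E_Y \times E_Y$, $(e,s)\mapsto(e,s\cdot e)$, is an isomorphism, there is a unique $c\colon Q\times Y \to S$ with $u(q\cdot y) = c(q,y)\cdot u(y)$. Now take a character $\chi\in\widehat{S}$ and a lift $f\in\bar{K}[E_Y]^\times$ of it. Then $u^\ast f$ is the unit $g$ on $Y$ corresponding to $\chi$, and
\[
\chi(c(q,y)) = \frac{g(q\cdot y)}{g(y)} = \chi_g(q),
\]
which does not depend on $y$. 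Since characters separate geometric points of $S$, $c$ factors through a morphism $c\colon Q \to S$ with $c^\ast\chi = \chi_g$. The associativity of the action gives $c(q_1q_2) = c(q_1)c(q_2)$, and $c$ is defined over $K$ because $u$ and the action are. So $c$ is a homomorphism of $K$-tori whose dual map $\widehat{S}\to\widehat{Q}$ is the Sansuc map; hence $c = \rho_S$. This is the required $\rho$-equivariance of $u$.

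\textbf{Step 2 (descent).} Set $E_X \coloneq E_Y/R$, a $K$-torsor under $S/R = T$, and let $\tau'\colon E_Y \to E_X$ be the quotient map, which is an $R$-torsor. By Step 1, $\tau'\circ u$ is $Q$-invariant. Since $Y \to X$ is a $Q$-torsor, it is an fppf quotient, so $\tau'\circ u$ descends uniquely to a morphism $u_X\colon X \to E_X$ making the square commute.

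\textbf{Step 3 ($u_X$ is a units morphism).} This is the step I expect to be the main obstacle. The units of $\bar{E}_X$ are exactly the units of $\bar{E}_Y$ whose characters lie in $\widehat{T}\subseteq\widehat{S}$. Their pullbacks under $u$ are the units of $\bar{Y}$ whose class lies in $\widehat{T}$, and by Sansuc's sequence these are precisely the units pulled back from $\bar{X}$. So $u_X^\ast$ gives a map of extensions of $\widehat{T}$ by $\bar{K}^\times$ that is the identity on both ends, hence an isomorphism.

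It remains to compare extension classes. The class of $E_X$ is $\tau'_\ast[E_Y]$, which corresponds to restricting the Ext class of the units extension of $Y$ along $\widehat{T}\hookrightarrow\widehat{S}$. The units extension of $X$ is exactly this restriction, via the injection $\bar{K}[X]^\times\hookrightarrow\bar{K}[Y]^\times$. Hence $[E_X]$ is the inverse of the units extension class of $X$, as required in Lemma~\ref{lemm:UnitTorsor}. Its sign convention needs care, but it is inherited directly from that of $E_Y$.
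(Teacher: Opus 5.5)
Your proof is correct, and it takes a genuinely different route from the paper's.

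The paper works through the intermediate $X$-torsor $Y' = Y/M = Y\times^Q R$ under $R$, whose type is zero. Sansuc's sequence and the five lemma show that $Y\to Y'$ induces an isomorphism on units, so any units morphism for $Y'$ composed with $Y \to Y'$ is one for $Y$. The paper then invokes \cite[Theorem~8.10]{HS2013Descent} to get units morphisms $Y'\to E_Y$ and $X\to E_X$ forming a cartesian square with $u'$ $R$-equivariant. The $\rho$-equivariance of $u$ is then inherited from the $\rho$-equivariant projection $Y\to Y'$.

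You avoid both $Y'$ and the external theorem. You take an arbitrary units morphism for $Y$ and prove directly that it is $\rho$-equivariant, using the same character-separation argument the paper uses in Lemma~\ref{lemm:UnitTorsor}~\ref{lemm:UnitTorsor:3}. You then construct $E_X$ as $E_Y\times^S T$, descend $\tau'\circ u$ along $Y\to X$, and check by hand that the result is a units morphism.

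What each route buys:
\begin{itemize}
\item Your argument is self-contained. It also shows slightly more: every units morphism for $Y$ fits into such a square, with $E_X$ forced to be $E_Y/R$.
\item In your construction the $\tau$-equivariance of $\tau'$ holds by definition. This is exactly what Lemma~\ref{lemm:TorEqualTorTor:UnitTorsorTrivial} needs in order to identify $\tau'$ with $\tau$ after trivializing.
\item The paper's route additionally gives $Y'\cong X\times_{E_X}E_Y$. Neither the lemma nor its later uses need this.
\end{itemize}

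One small remark: your final paragraph comparing extension classes is redundant. Once $u_X^\ast$ is an isomorphism of extensions, Example~\ref{exam:TorsorUnderTorus} already forces $[E_X]$ to be the inverse of the units class of $X$, with the correct sign.
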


\begin{proof}
    The pushforward $Y'\coloneq Y/M = Y \times^Q R$ is an $X$-torsor under $R$ whose type is~composite
    \[
        \widehat{R} \overset{\widehat{\rho}} \longrightarrow \widehat{Q} \longtwoheadrightarrow \Pic(\bar{X}) = \widehat{M},
    \]
    that is, the zero map. Sansuc's exact sequence \cite[(6.10.1)]{Sansuc1981Arithmetique} for the torsor $Y' \to X$~becomes
    \[
        0 \longrightarrow \widehat{T} \longrightarrow \bar{K}[Y']^\times/\bar{K}^\times \longrightarrow \widehat{R} \overset{0}{\longrightarrow} \Pic(\bar{X}),
    \]
    so $\bar{K}[Y']^\times/\bar{K}^\times$ is an extension of $\widehat{R}$ by $\widehat{T}$. By the five lemma, the pullback 
    \[
         \bar{K}[Y']^\times \longrightarrow \bar{K}[Y]^\times
    \]
    induces an isomorphism $\bar{K}[Y']^\times/\bar{K}^\times \cong \widehat{S}$. The same lemma tells us that this pullback is then an isomorphism, so for every unit morphism $Y' \to E_{Y'}$, the composite $Y \to Y' \to E_{Y'}$ is a unit morphism for $Y$.  By \cite[Theorem~8.10]{HS2013Descent}, we can choose units morphisms for $X$ and $Y'$ that fit in a cartesian square
    \[
    \xymatrix{
        Y' \ar[rr]^{u'} \ar[d] && E_{Y }\ar[d]^{\tau'} \\
        X \ar[rr] && E_X,
    }
    \]
    where $\tau'$ is a torsor under $R$ and where $u'$ is $R$-equivariant. Taking $u$ to be the composite
    \[
        Y \longtwoheadrightarrow Y' \overset{u'}{\longrightarrow} E_{Y}
    \]
    concludes the proof of the lemma.
\end{proof}

\begin{lemm} \label{lemm:TorEqualTorTor:UnitTorsorTrivial}
    If $X(\Abf_K)^{\qt,\Be} \neq \varnothing$, then $E_Y(K) \neq \varnothing$ and the morphism 
    \[
        \tau'\colon E_Y \longrightarrow E_X
    \]
    from Lemma~\ref{lemm:TorEqualTorTor:UnitTorsor} can be identify with the morphism $\tau\colon S \to T$ after appropriate trivializations of $E_Y$ and $E_X$.
\end{lemm}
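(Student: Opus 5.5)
The plan is to prove that the units torsor $E_Y$ has a rational point. Its image under $\tau'$ is then a rational point of $E_X$, and trivializing both torsors compatibly identifies $\tau'$ with $\tau$.

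\textbf{Step 1: $E_Y(K)\neq\varnothing$.} I follow the proof of Proposition~\ref{prop qt Be_lamba existence of type lambda}. Pick $x_\Abf \in X(\Abf_K)^{\qt,\Be}$. Since $Y \to X$ is a torsor under the quasitrivial torus $Q$, the definition of $X(\Abf_K)^{\qt,\Be}$ yields a lift $y_\Abf \in Y(\Abf_K)^{\Be(Y)}$ of $x_\Abf$. Consider $u(y_\Abf)\in E_Y(\Abf_K)$ and take $\alpha\in\Be(E_Y)$. The key claim is that $u^*\alpha \in \Be(Y)$.
\begin{itemize}
\item $u^*\alpha$ lies in $\Br_1(Y)$ because pullback preserves algebraic classes.
\item Its image in each $\Br_{\mathrm{al}}(Y_v)$ vanishes because $\alpha_v$ is constant on $E_{Y,v}$ modulo $\Br(K_v)$, and $u^*$ is compatible with the maps from $\Br(K_v)$.
\end{itemize}
Functoriality of the Brauer--Manin pairing, as in \eqref{eq compatibility  BM}, then gives $\operatorname{BM}(u(y_\Abf),\alpha)=\operatorname{BM}(y_\Abf,u^*\alpha)=0$. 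Hence $E_Y(\Abf_K)^{\Be(E_Y)}\neq\varnothing$. Since $E_Y$ is a $K$-torsor under the torus $S$, \cite[Corollaire~8.3]{CTH2015DualCt} and \cite[Théorème~5.2]{Izquierdo2019Dualite} give $E_Y(K)\neq\varnothing$.

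\textbf{Step 2: compatible trivializations.} Fix $e\in E_Y(K)$ and set $e'\coloneq\tau'(e)\in E_X(K)$. Use $e$ and $e'$ to trivialize, via $s\mapsto s\cdot e$ and $t\mapsto t\cdot e'$. Under these trivializations $\tau'$ becomes a morphism of varieties $S\to T$ sending $1$ to $1$. I must check it agrees with $\tau$, that is, that $\tau'$ is equivariant along $\tau\colon S\to T$. This comes from how $E_Y\to E_X$ is constructed in \cite[Theorem~8.10]{HS2013Descent}: the torsor $E_Y$ under $S$ maps to $E_X$ with respect to the quotient $S\to S/R=T$. Equivalently, on characters $\tau'^*$ is the inclusion $\bar K[X]^\times/\bar K^\times\hookrightarrow \bar K[Y]^\times/\bar K^\times$, which is $\widehat\tau$. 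Given equivariance, $\tau'(s\cdot e)=\tau(s)\cdot e'$, which is the identification claimed.

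\textbf{Main obstacle.} The delicate point is the equivariance in Step 2. If \cite[Theorem~8.10]{HS2013Descent} only provides a cartesian square with $\tau'$ an $R$-torsor, equivariance still has to be shown. I would argue directly with units, in the spirit of Lemma~\ref{lemm:UnitTorsor}~\ref{lemm:UnitTorsor:3}. The map $(s,e_1)\mapsto \tau'(s e_1)\,(\tau(s)\tau'(e_1))^{-1}$ lands in $T$. For every character $\chi\in\widehat T$ its composite with $\chi$ is an invertible function on $S\times E_Y$ of trivial character. Rosenlicht's lemma then forces it to be constant, and evaluating at $s=1$ shows the constant is $1$.
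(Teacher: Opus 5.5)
Your proof is correct and follows the paper's argument. You lift a point of $X(\Abf_K)^{\qt,\Be}$ to $Y$, push it forward by $u$ to get $E_Y(\Abf_K)^{\Be(E_Y)}\neq\varnothing$, conclude $E_Y(K)\neq\varnothing$ by \cite[Corollaire~8.3]{CTH2015DualCt} and \cite[Th\'eor\`eme~5.2]{Izquierdo2019Dualite}, and trivialize via $e$ and $\tau'(e)$. The differences are minor: you use one lift orthogonal to all of $\Be(Y)$ where the paper uses one lift per class plus local constancy of $\Be$-classes, and you justify (via $\tau'^*=\widehat\tau$ on units modulo constants) the $\tau$-equivariance of $\tau'$ that the paper only asserts.
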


\begin{proof}
    Take an arbitrary adelic point $x_\Abf \in X(\Abf_K)^{\qt,\Be}$. By definition, for any $\alpha \in \Be(E_Y)$, there exists a lifting $y_\Abf^{\alpha} \in Y(\Abf_K)$ that is orthogonal to $u^\ast \alpha \in \Be(Y)$. By functoriality, its image $u(y_\Abf^{\alpha}) \in E_Y(\Abf_K)$ is orthogonal to $\alpha$. Since $\alpha$ is locally constant, any element of $E_Y(\Abf_K) \neq \varnothing$ is orthogonal to $\alpha$. Furthermore, we have $E_Y(\Abf_K) \neq \varnothing$. Thus 
    \[
        E_Y(\Abf_K)^{\Be(E_Y)} = E_Y(\Abf_K) \neq \varnothing.
    \]
    Then $E_Y(K) \neq \varnothing$ by \cite[Corollaire~8.3]{CTH2015DualCt} and \cite[Th\'eor\`eme~5.2]{Izquierdo2019Dualite}. Choose a $K$-rational point $e \in E_Y(K)$ to trivialize $E_Y \cong S$, and then use $\tau'(e)$ to trivialize $E_X \cong T$. This makes the morphisms in the new diagram compatible with the toric action. The lemma is proved.
\end{proof}

\noindent Let $\pi$ be the composite $Q \overset{\rho}{\longtwoheadrightarrow} R \longhookrightarrow S$. By Lemma~\ref{lemm:TorEqualTorTor:KerRhoFiniteExponent} and Proposition~\ref{prop properties adelic spaces}~\ref{prop properties adelic spaces: finite exponent}, there exist integers $e_1,e_2 \ge 1$ such that
\begin{equation}\label{eq:TorEqualTorTor:TheIntegerE1}
    \Ker\left(\bar{\tau}\colon\frac{\Acal(S)}{\Acal(S)_{\div} + \pi(\Acal(Q))} \longrightarrow \frac{\Acal(T)}{\Acal(T)_{\div}} \right) 
\end{equation}
is $e_1$-torsion and
\begin{equation}\label{eq:TorEqualTorTor:TheIntegerE2}
    \left(\frac{\Acal(T)}{\Acal(T)_{\div}} \right)_{\tors}
\end{equation}
is $e_2$-torsion. \par
There exist surjective morphisms of tori
\[
    \chi_1\colon P_1 \longtwoheadrightarrow S \qquad \text{and} \qquad \chi_2\colon P_2 \longtwoheadrightarrow T
\]
where $P_1$ and $P_2$ are quasitrivial. Let 
\[
    P\coloneq Q \times P_1 \times P_2,
\]
which is a quasitrivial torus, and consider the morphism of tori
\[
    \Phi: P \longrightarrow S \times T, \qquad (q,p_1,p_2) \longmapsto \bigl(\pi(q) \chi_1(p_1)^{-e_1}, \tau(\chi_1(p_1)) \chi_2(p_2)^{-e_2} \bigr).
\]

\begin{lemm} \label{lemm:TorEqualTorTor:PhiSurjective}
    $\Phi$ is surjective.
\end{lemm}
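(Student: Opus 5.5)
The plan is to show surjectivity on $\bar{K}$-points. For a morphism of tori, surjectivity as algebraic groups can be checked on geometric points, or equivalently by showing that the induced map on character lattices $\widehat{\Phi}\colon \widehat{S}\oplus\widehat{T} \to \widehat{Q}\oplus\widehat{P_1}\oplus\widehat{P_2}$ is injective. I will argue directly with $\bar{K}$-points, since $S(\bar K)\times T(\bar K)$ is a divisible group and $\chi_1,\chi_2,\tau$ are surjective on $\bar K$-points.

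Fix $(s,t) \in S(\bar{K}) \times T(\bar{K})$. We look for $(q,p_1,p_2)$ with
\[
\pi(q)\chi_1(p_1)^{-e_1} = s \qquad\text{and}\qquad \tau(\chi_1(p_1))\chi_2(p_2)^{-e_2} = t.
\]
The key observation is that the second coordinate can always be fixed using $p_2$. Given any $p_1$, the element $\tau(\chi_1(p_1))t^{-1}$ lies in $T(\bar K)$. This group is divisible, and $\chi_2$ is surjective on $\bar K$-points. Hence there is $p_2$ with $\chi_2(p_2)^{e_2} = \tau(\chi_1(p_1))t^{-1}$, which is exactly the second equation.

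It therefore suffices to solve the first equation. This is easy even ignoring $q$: take $q=1$. Since $S(\bar K)$ is divisible and $\chi_1$ is surjective, choose $s' \in S(\bar K)$ with $s'^{-e_1} = s$, and then $p_1$ with $\chi_1(p_1) = s'$. With this $p_1$, choose $p_2$ as in the previous paragraph, and the pair of equations holds.

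There is no real obstacle here. The only point needing care is the justification that surjectivity on $\bar K$-points implies surjectivity of the morphism of algebraic groups. This holds because the image of a homomorphism of algebraic groups is closed, so an image containing all $\bar K$-points is everything, $S\times T$ being reduced. Alternatively, one can phrase the whole argument dually: if $(a,b)\in\widehat S\oplus\widehat T$ dies under $\widehat\Phi$, the $P_2$-component gives $-e_2\widehat{\chi_2}(b)=0$, hence $b=0$ since $\widehat{\chi_2}$ is injective and lattices are torsion-free. The $P_1$-component then gives $-e_1\widehat{\chi_1}(a)=0$, hence $a=0$.
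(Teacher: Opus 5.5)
Your proof is correct. It differs from the paper's only in how the first coordinate is handled, and the variation is a simplification.

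\emph{The paper's route.} It chooses $p_1$ only so that $\tau(\chi_1(p_1))^{e_1}=\tau(s)^{-1}$. It then uses $\Ker(\tau)=R$ and the surjectivity of $\rho\colon Q\to R$ to find $q$ with $\pi(q)=\chi_1(p_1)^{e_1}s$.

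\emph{Your route.} You set $q=1$ and solve $\chi_1(p_1)^{-e_1}=s$ directly, using divisibility of $S(\bar K)$ and surjectivity of $\chi_1$. The choice of $p_2$ is then identical in both arguments.

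Your version therefore shows that the restriction of $\Phi$ to $P_1\times P_2$ is already surjective. So the factor $Q$, together with the exact sequences $1\to R\to S\to T\to 1$ and $1\to M\to Q\to R\to 1$, plays no role in this lemma. The $Q$-component is needed only later in the proof of the theorem, because $Y$ is a $Q$-torsor and $W=Y\times P_1\times P_2$ must be a $P$-torsor on which $\Psi$ is $\Phi$-equivariant.

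You also supply a step the paper leaves implicit: why surjectivity on $\bar K$-points gives surjectivity of $\Phi$ as a morphism of algebraic groups. Either of your justifications works: the image is closed, or equivalently $\widehat\Phi$ is injective on character lattices. Your lattice computation is right. Once $b=0$ is forced by the $P_2$-component, the $P_1$-component $-e_1\widehat{\chi_1}(a)+\widehat{\chi_1}(\widehat\tau(b))$ reduces to $-e_1\widehat{\chi_1}(a)$, so $a=0$. Again, the $Q$-component is never used.
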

\begin{proof}
    Consider elements $s \in S(\bar{K})$ and $t \in T(\bar{K})$. There exist $p_1 \in P_1(\bar{K})$ such that 
    \[
        \tau(\chi_1(p_1))^{e_1} = \tau(s)^{-1},
    \]
    because $\tau$, $\chi_1$, and $[e_1]_{S}$ are surjective. Since the kernel of $\tau$ is $R$, this means 
    \[
        \chi_1(p_1)^{e_1} s \in R(\bar{K}).
    \]
    By surjectivity of $\rho \colon Q \to R$, there exists $q \in Q(\bar{K})$ with
    \[
        \chi_1(p_1)^{e_1} s = \rho(q) = \pi(q).
    \]
    Finally, since $\chi_1$ and $[e_2]_{T}$  are surjective, there exists $p_2 \in P_2(\bar{K})$ such that
    \[
        \chi_2(p_2)^{e_2} = \tau(\chi_1(p_1)) t^{-1}.
    \]
    A direct calculation now shows that $\Phi(q,p_1,p_2) = (s,t)$.
\end{proof}

\begin{proof}[Proof of Theorem~\ref{thm:TorEqualTorTor}]
  We may assume $X(\Abf_K)^{\tor} \neq \varnothing$, which implies $X(\Abf_K)^{\qt,\Be} \neq \varnothing$ by \cite[Remark~6.3 and Theorem~6.4]{ILA2021LocalGlobal}. We keep all of the above notation. In particular, $Y\to X$ is a torsor under a quasitrivial torus whose classical type is surjective. By Lemma~\ref{lemm:TorEqualTorTor:PhiSurjective}, we have an exact sequence
\[
    1 \longrightarrow N \longrightarrow P \overset{\Phi}{\longrightarrow} S \times T \longrightarrow 1,
\]
where $N \coloneq \Ker(\Phi)$ is a $K$-group of multiplicative type. We construct a torsor 
\[
    Z \longrightarrow X
\]
under $N$ as follows: first, Lemma~\ref{lemm:TorEqualTorTor:UnitTorsorTrivial} ensures that we have a commutative diagram
\begin{equation} \label{eq:TorEqualTorTor:UnitTorsorTrivial}
    \xymatrix{
        Y \ar[rr]^{u} \ar[d] && S \ar[d]^{\tau} \\
        X \ar[rr] && T,
    }
\end{equation}
where $u$ is $\rho$-equivariant. Let $W \coloneq Y \times P_1 \times P_2$, which is an $X$-torsor under $P$. We define
\[
    \Psi\colon W \longrightarrow S \times T, \qquad (y,p_1,p_2) \longmapsto \bigl(u(y) \chi_1(p_1)^{-e_1}, \tau(\chi_1(p_1)) \chi_2(p_2)^{-e_2} \bigr),
\]
which is  $\Phi$-equivariant because $u$ is $\rho$-equivariant. The fibre
\[
    Z \coloneq \Psi^{-1}(1_S,1_T),
\]
is then an $X$-torsor under $\Phi^{-1}(1_S,1_T) = \Ker(\Phi) = N$.
\par

Let us now take an arbitrary adelic point $x_\Abf \in X(\Abf_K)^{\tor}$. By Proposition~\ref{prop:TorEqualMult}, there exists $[\sigma] \in \H^1(K,N)$ such that $x_\Abf$ lifts to an adelic point $z_\Abf \in Z^{\sigma}(\Abf_K)$. Since $\H^1(K,P) = 0$, there exists $(s,t) \in S(K) \times T(K)$ such that $[\Phi^{-1}(s,t)] = \delta(s,t) = -[\sigma]$, where
\[
    \delta\colon S(K) \times T(K) \longrightarrow \H^1(K,N)
\]
is the connecting homomorphism. Then
\[
    Z^\sigma \cong Z \times^N \Phi^{-1}(s,t) \cong \Psi^{-1}(s,t)
\]
as $X$-torsors under $N$. Under this identification, $z_\Abf$ corresponds to a point 
\[
    \bigl(y_\Abf, p_\Abf^{(1)}, p_\Abf^{(2)}) \in W(\Abf_K\bigr),
\]
where $y_\Abf \in Y(\Abf_K)$ lifts $x_\Abf$ and where $\bigl(p^{(1)}_\Abf,p^{(2)}_\Abf\bigr) \in P_1(\Abf_K) \times P_2(\Abf_K)$ satisfies
\[
    u(y_\Abf)\chi_1(p^1_\Abf)^{-e_1} = s \qquad \text{and} \qquad \tau(\chi_1(p^1_\Abf)) \chi_2(p^2_\Abf)^{-e_2} = t.
\]
\par 

Let $a,b \in \Acal(S)$ denote the respective classes of $u(y_\Abf)$ and $\chi_1(p^1_\Abf)$, and let $c \in \Acal(T)$ denote the class of $\chi_2(p^2_\Abf)$. The above equations implies that 
\[
    a = e_1b \qquad \text{and} \qquad \tau(b) = e_2c.
\]
Let us now look at diagram \eqref{eq:TorEqualTorTor:UnitTorsorTrivial}. Since $x_\Abf \in X(\Abf_K)^{\tor}$, its image in $T(\Abf_K)$, namely $\tau(u(y_\Abf))$, belongs to $T(\Abf_K)^{\tor}$ by functoriality of the toric descent set. By Proposition~\ref{prop:TorEqualTorTorForTori}, the class $\tau(a) \in \Acal(T)$ of $\tau(u(y_\Abf))$ is infinitely divisible. Thus
\[
    e_1e_2c = e_1 \tau(b) = \tau(a) \in \Acal(T)_{\div}.
\]
In particular, the image of $c$ in $\Acal(T)/\Acal(T)_{\div}$ is torsion. It follows that 
\[
    \tau(b) = e_2c \in \Acal(T)_{\div},
\]
by definition of $e_2$, {\em cf.} \eqref{eq:TorEqualTorTor:TheIntegerE2}. Then
\[
    a = e_1 b \in \Acal(S)_{\div} + \pi(\Acal(Q)),
\]
by definition of $e_1$, {\em cf.} \eqref{eq:TorEqualTorTor:TheIntegerE1}. This means there exists $q_\Abf \in Q(\Abf_K)$ such that
\[
    [\pi(q_\Abf) u(y_\Abf)] \in \Acal(S)_{\div}.
\]
By Proposition~\ref{prop:TorEqualTorTorForTori} and the $\pi$-equivariance of $u$, this implies
\[
    u(q_\Abf \cdot y_\Abf) = \pi(q_\Abf) u(y_\Abf) \in S(\Abf_K)^{\tor}.
\]
\par

\noindent Recall that $\Pic(\bar{Y}) = 0$ by Lemma~\ref{lemm:TorEqualTorTor:ExtentedTypeCoveringPicX}. By Proposition~\ref{prop:UnitTorsorPreserveTor} and Theorem~\ref{thm:TorEqualTorTorTrivialPicardGroup}, we have 
\[
    q_\Abf \cdot y_\Abf \in Y(\Abf_K)^{\tor} = Y(\Abf_K)^{\tor,\tor}.
\]
By functoriality of the iterated toric descent set, the adelic point $x_\Abf$, which is the image of $q_\Abf \cdot y_\Abf$, belongs to $X(\Abf_K)^{\tor,\tor}$. This concludes the proof of the theorem.
\end{proof}
 
\section{Applications to homogeneous spaces}\label{section Homogeneous spaces}

In this section, we obtain some direct applications of the results in Section~\ref{sec:TorsorOfGivenXType} to the local--global principle for homogeneous spaces over geometric global fields. Throughout this section, $K$ denotes a geometric global field and $C$ the associated Dedekind scheme.


\subsection{Preliminaries on homogeneous spaces}

Let $G$ be a linear algebraic group. We fix the following notation
\begin{itemize}
    \item $\mathrm{D}(G)$ is the derived subgroup of $G$.
    \item $G^{\circ}$ is the neutral connected component.
    \item $G^{\mathrm{f}} \coloneq G/ G^{\circ}$ is the group of components (a finite group).
    \item $G^{\mathrm{u}}$ denotes the unipotent radical (a characteristic unipotent subgroup).
    \item $G^{\mathrm{red}} \coloneq G / G^{\mathrm{u}}$ (a reductive group).
    \item $G^{\mathrm{ss}} \coloneq \operatorname{D}(G^{\mathrm{red}})$ (a semisimple group).
    \item $G^{\tor} \coloneq G^{\mathrm{red}}/ G^{\mathrm{ss}}$ (a torus).
    \item $G^{\mathrm{ssu}} \coloneq \ker \left( G^{\circ} \twoheadrightarrow G^{\tor}\right)$ (an extension of $G^{\mathrm{ss}}$ by $G^{\mathrm{u}}$).
    \item $G^{\mathrm{torf}} \coloneq G^{\circ}/ G^{\mathrm{ssu}}$ (an extension of $G^{\mathrm{f}}$ by $G^{\tor}$).
\end{itemize}

\begin{defn}\label{def condition Borovoi}
    Let $X$ be a homogeneous space of a connected linear group $G$ with connected geometric stabilisers $\bar{H}$. We say that $X$ satisfies the \textit{Borovoi condition} if $G^{\mathrm{ss}}$ is simply connected and $H^{\mathrm{torf}}$ is commutative (hence of multiplicative type).
\end{defn}

\noindent The local--global principle for homogeneous spaces satisfying the Borovoi condition was studied in \cite{ILA2021LocalGlobal}. There, Izquierdo and Lucchini Arteche introduced a set $\Omega_{X,0}$ of places depending on the homogeneous space $X$. The set $\Omega_{X,0}$ is obtained by adjoining finitely many places to $C^{(1)}$, but it is strictly larger than $C^{(1)}$ in general. We summarize the main results of \cite{ILA2021LocalGlobal} in the following proposition.

\begin{prop}{\cite[Theorem~3.2 and Corollary~5.6]{ILA2021LocalGlobal}}\label{prop ILA}
    Let $X$ be a homogeneous space of $G$ with geometric stabiliser $\bar{H}$. Assume that $X$ satisfies the Borovoi condition. Then, the following are true:
    \begin{enumerate}
        \item \label{prop ILA :1} If the natural map $\bar{H}^{\mathrm{torf}} \to \bar{G}^{\tor}$ is injective then the Brauer--Manin obstruction with respect to $\Be(X)$ is the only obstruction to the local--global for $X$.
        
        \item\label{prop ILA general homo} If $X(K_v)\neq \varnothing$ for all $v \in \Omega_{X,0}$ then
        \[
            X(\Abf_K)^{\qt, \Be} \neq \varnothing \Longrightarrow X(K)\neq\varnothing.
        \]
    \end{enumerate}
\end{prop}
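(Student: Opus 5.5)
The statement is a summary of results from \cite{ILA2021LocalGlobal}, namely Theorem~3.2 and Corollary~5.6 there. My plan is therefore to reduce both parts to those statements, checking that their hypotheses match ours. The one point needing care is that our $X(\Abf_K)^{\qt,\Be}$ is \emph{a priori} smaller than the set in \cite[Definition~6.2]{ILA2021LocalGlobal}; see the Remark following our definition.

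For part~\ref{prop ILA :1}, I would follow the classical Borovoi strategy.
\begin{enumerate}
\item Since $G^{\mathrm{ss}}$ is simply connected, pass to the quotient $X \to X/G^{\mathrm{ssu}}$-type construction. Concretely, the homogeneous space $X$ of $G$ maps to a homogeneous space $X'$ of the torus $G^{\tor}$.
\item The geometric stabiliser of $X'$ is the image of $\bar{H}^{\mathrm{torf}}$ in $\bar{G}^{\tor}$. By the injectivity hypothesis it is a group of multiplicative type.
\item $X'$ is therefore a torsor under the torus $G^{\tor}/H^{\mathrm{torf}}$, once $H^{\mathrm{torf}}$ is descended to $K$; commutativity is what makes this descent possible.
\item For torsors under tori, the $\Be$-obstruction is the only one, by \cite[Corollaire~8.3]{CTH2015DualCt} and \cite[Théorème~5.2]{Izquierdo2019Dualite}.
\item The fibres of $X \to X'$ over rational points are homogeneous spaces of $G^{\mathrm{ssu}}$ with stabiliser $\bar{H}^{\mathrm{ssu}}$-type groups, whose $\mathrm{torf}$ part is trivial.
\item For these fibres one uses the vanishing of the relevant $\H^2$/nonabelian cohomology together with the Hasse principle for simply connected groups over these fields, due to Colliot-Thélène--Parimala--Suresh-type results.
\end{enumerate}
To connect the two levels, I would compare $\Be(X)$ with $\Be(X')$ via the pullback, which is an isomorphism in this situation.

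For part~\ref{prop ILA general homo}, I would not reprove anything. I would simply observe that $X(\Abf_K)^{\qt,\Be}$ as defined here is contained in the set of \cite[Definition~6.2]{ILA2021LocalGlobal}, by \cite[Remark~6.3]{ILA2021LocalGlobal}. The implication proved in \cite[Corollary~5.6]{ILA2021LocalGlobal} therefore applies \emph{a fortiori}: nonemptiness of our set implies nonemptiness of theirs, which together with local points at $\Omega_{X,0}$ gives $X(K)\neq\varnothing$.

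The main obstacle is part~\ref{prop ILA :1}, specifically the fibration step. One must ensure that rational points of $X'$ produced by the torus result can be chosen so that the fibre has local points everywhere and no residual obstruction. This requires a compatibility between the $\Be$-orthogonality on $X$ and on $X'$, along with a strong-approximation-type input for $G^{\mathrm{ssu}}$. Since this is exactly the content of \cite[Theorem~3.2]{ILA2021LocalGlobal}, I would cite it directly rather than redo it.
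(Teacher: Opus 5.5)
Your proposal is correct and matches the paper, which gives no independent proof of this statement and simply quotes \cite[Theorem~3.2 and Corollary~5.6]{ILA2021LocalGlobal}. Your observation that the set $X(\Abf_K)^{\qt,\Be}$ used here is \emph{a priori} contained in the set of \cite[Definition~6.2]{ILA2021LocalGlobal}, so that part~2 follows \emph{a fortiori}, is exactly the content of the Remark after the definition and is applied in the right direction; the Borovoi-style fibration sketch for part~1 is extra and not needed, since you ultimately defer to \cite[Theorem~3.2]{ILA2021LocalGlobal} just as the paper does.
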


\noindent In the rest of this section we explain how the results in Section~\ref{sec:TorsorOfGivenXType} allows us to  remove the assumption ``$X(K_v)\neq \varnothing$ for $v \in \Omega_{X,0}$'' in Proposition~\ref{prop ILA}~\ref{prop ILA general homo}.


\subsection{Springer class and descent types}

Let $X$ be a homogeneous space of an algebraic $K$-group $G$. Let $x \in X(\bar{K})$ and denote by $\bar{H}$ the stabiliser of $x$. A construction due to Springer associated to the pair $(X,x)$ a $K$-lien\footnote{or {\em band}, or {\em kernel}.} $L_X=(\bar{H}, \kappa_X)$ and a class $\eta_X \in \H^2(K,L_X)$ such that the class $\eta_X$ is neutral if and only if $X$ is dominated by a principal homogeneous space of $G$. Explicitly, the class $\eta_X$ is represented by the extension
\[
    1 \longrightarrow \bar{H}(\bar{K}) \longrightarrow E_X \longrightarrow \Gamma_K \longrightarrow 1,
\]
where $\Gamma_K$ is the absolute Galois group of $K$ and $E_X$ is the subgroup of $G(\bar{K}) \rtimes \Gamma_K$ that consists of elements $(g,\sigma)$ such that $\tensor[^\sigma]{x}{} = x \cdot g$. We denote by $\eta_X^{\mathrm{torf}}$ the image of $\eta_X$ in $\H^2(K, \bar{H}^{\mathrm{torf}})$. Note that since $\bar{H}^{\mathrm{torf}}$ is commutative, the class $\eta_X^{\mathrm{torf}}$ defines naturally a $K$-form $H^{\mathrm{torf}}$ of $\bar{H}^{\mathrm{torf}}$. When $\bar{H}$ is connected (i.e. when $H^{\mathrm{torf}}$ is a $K$-torus), we denote $\eta_X^{\mathrm{torf}}$ by $\eta_X^{\mathrm{tor}}$. See \cite[Section~2]{DLA2019HasseHomoReductionFinite} for more details on this.\par

It is known that period and index coincide for the fields under consideration, see \cite[Theorem~5.5]{HHK2029ApplicationsPatching} and \cite[Theorem~1.4]{CTGP2004ArithmeticLinear}. Then \cite[Proposition~5.5]{CTGP2004ArithmeticLinear} directly implies the following.

\begin{prop}\label{prop abelianisation springer class}
    Let $X$ be a homogeneous space of $G$ with connected geometric stabiliser. Then $\eta_X$ is neutral if and only if $\eta_X^{\mathrm{tor}} = 0$.
\end{prop}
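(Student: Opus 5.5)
The plan is to reduce the statement to the general criterion of \cite[Proposition~5.5]{CTGP2004ArithmeticLinear}, which says the following. Suppose $K$ is a field of cohomological dimension at most $2$ on which period equals index for central simple algebras over all finite extensions. Then a $K$-lien $L$ with connected reductive (or, after quotienting by the unipotent radical, connected) underlying group has a class $\eta \in \H^2(K,L)$ that is neutral if and only if its image in $\H^2(K,L^{\mathrm{tor}})$ vanishes. Geometric global fields have cohomological dimension $2$. The period--index hypothesis is provided by \cite[Theorem~5.5]{HHK2029ApplicationsPatching} for fields of type (a) and by \cite[Theorem~1.4]{CTGP2004ArithmeticLinear} for fields of type (b). Since finite extensions of a geometric global field are again geometric global fields of the same type, the hypothesis holds uniformly over all finite extensions, as the criterion requires.

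Next, apply this to the Springer class $\eta_X \in \H^2(K,L_X)$ of the lien $L_X=(\bar H,\kappa_X)$.

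\textbf{Step 1: remove the unipotent radical.} Push $\eta_X$ forward along $\bar H \to \bar H^{\mathrm{red}}$. Neutrality of classes in nonabelian $\H^2$ is unchanged when one quotients by a unipotent characteristic subgroup, because $\H^2$ of a unipotent lien over a perfect field of characteristic $0$ is trivial and neutral. So $\eta_X$ is neutral if and only if its image in $\H^2(K,L_X^{\mathrm{red}})$ is neutral.

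\textbf{Step 2: reduce to the toric part.} For the reductive connected lien $L_X^{\mathrm{red}}$, apply the cited criterion. Its image in $\H^2(K,\bar H^{\mathrm{tor}})$ is exactly $\eta_X^{\mathrm{tor}}$, because $\bar H^{\mathrm{torf}}=\bar H^{\mathrm{tor}}$ when $\bar H$ is connected. This gives the equivalence.

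\textbf{Step 3: the easy direction.} If $\eta_X$ is neutral, its image under the morphism of liens $L_X \to \bar H^{\mathrm{tor}}$ is neutral. In the abelian group $\H^2(K,H^{\mathrm{tor}})$ the unique neutral class is $0$, so $\eta_X^{\mathrm{tor}}=0$.

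The main obstacle is checking that the hypotheses of \cite[Proposition~5.5]{CTGP2004ArithmeticLinear} are met exactly as stated there. In particular, one must confirm that period equals index over every finite extension of $K$, not just over $K$ itself, and that the result applies to liens rather than only to groups defined over $K$. For type (b) fields this should follow because finite extensions of $\mathbf{C}\lau{x,y}$ remain of the same form, so \cite[Theorem~1.4]{CTGP2004ArithmeticLinear} applies to them as well.
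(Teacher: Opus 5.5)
Your proposal is correct and follows the paper's own argument: the paper likewise obtains period $=$ index for both types of fields from \cite[Theorem~5.5]{HHK2029ApplicationsPatching} and \cite[Theorem~1.4]{CTGP2004ArithmeticLinear}, and then applies \cite[Proposition~5.5]{CTGP2004ArithmeticLinear} directly. Your extra steps are harmless details that the paper leaves implicit: the reduction modulo the unipotent radical, the remark that finite extensions remain geometric global fields of the same type, and the easy direction.
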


Recently, the second named author introduced in \cite{ManhLinh2025DescentTypes} a general notion of \textit{descent type} of a torsor. Let $L = (\bar{H}, \kappa)$ be a $K$-lien. A descent type in $\operatorname{DType}(X,L)$ is represented by a pair $(\bar{Y}, E)$ where $\bar{Y} \to \bar{X}$ is a torsor under $\bar{H}$ and $E$ is an extension 
\[
    1 \longrightarrow \bar{H}(\bar{K}) \longrightarrow E \to \Gamma_K \longrightarrow 1,
\]
satisfying certain conditions.\par
For groups of multiplicative type, descent types and extended types are equivalent thanks to \cite[Theorem~4.1]{ManhLinh2025DescentTypes}. We rewrite part this comparison theorem below for the sake of the reader.

\begin{prop}\label{prop descent types vs extened types}
    Let $k$ be a field and $Z$ a smooth geometrically integral variety over $k$. Then, for every group of multiplicative type $M$, there exists a functorial morphism 
    \[
        \Phi \colon \operatorname{DType}(Z,M) \longrightarrow \Hom_{\mathcal{D}^+(k)}(\widehat{M}, \UPic(\bar{Z}))
    \]
    fitting in the following commutative diagram
    \begin{equation*}
        \begin{tikzcd}
            \operatorname{DType}(Z,M) \ar[d,"\Phi"] \ar[rd] & \\
            \Hom_{\mathcal{D}^+(k)}\bigl(\widehat{M}, \UPic(\bar{Z})\bigr) \ar[r] & \H^2(k,M),
        \end{tikzcd}
    \end{equation*}
    where the slanted arrow takes the class of each pair $(\bar{Y}, E)$ to the class $[E]$.
\end{prop}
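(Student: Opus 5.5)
We construct $\Phi$ from the extension $E$ and the torsor $\bar Y$, mirroring the description of extended types in Proposition~\ref{prop:ExtendedType}. Recall that $\Hom_{\Dcal^+(k)}(\widehat{M},\UPic(\bar{Z}))$ sits in the exact sequence
\[
    \H^1(k,M) \longrightarrow \H^1(Z,M) \longrightarrow \Hom_{\Dcal^+(k)}\bigl(\widehat{M},\UPic(\bar{Z})\bigr) \overset{\ob}{\longrightarrow} \H^2(k,M),
\]
obtained by applying $\RHom(\widehat{M},-)$ to the triangle \eqref{eq:TriangleKDAndUPic}. Let $(\bar Y,E)$ represent a descent type. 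The plan is as follows.
\begin{enumerate}
    \item Since $M$ is commutative, $E$ is an extension of $\Gamma_k$ by $M(\bar k)$ with the given Galois action, so it defines a class $[E]\in\H^2(k,M)$.
    \item The conditions on descent types should say that $E$ acts on $\bar Y$ semilinearly, lifting the Galois action on $\bar Z$ and compatibly with the $M$-action. This lets $E$ act on $\bar k(Y)^\times$ and on $\Div(\bar Y)$.
    \item Use $\bar Y$ to write down a morphism of complexes. Rosenlicht-type reasoning as in Example~\ref{exam:TorsorUnderTorus} gives, for each $\chi\in\widehat M$, the group of $\chi$-semi-invariant rational functions $f$ on $\bar Y$, i.e.\ $f(m\cdot y)=\chi(m)f(y)$. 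This group is an extension of $\chi$ by $\bar k(Z)^\times$.
    \item Taking divisors gives the pair (semi-invariant functions $\to \Div(\bar Z)$, through descent of divisors on $\bar Y$). This yields a ``twisted'' morphism $\widehat M\to \KD(\bar Z)$ which is only $E$-equivariant rather than $\Gamma_k$-equivariant, because the constants $\bar k^\times$ carry the twist.
    \item After quotienting by $\bar k^\times$, the twist disappears. This produces an honest $\Gamma_k$-equivariant morphism $\widehat M\to\UPic(\bar Z)$, well defined in $\Dcal^+(k)$, which we define to be $\Phi(\bar Y,E)$.
\end{enumerate}

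Functoriality in $Z$ and $M$ is then a matter of pulling back and pushing out the semi-invariant function groups.

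For commutativity of the diagram, we must show $\ob(\Phi(\bar Y,E))=[E]$. By construction, $\ob(\lambda)$ is the obstruction to lifting $\lambda$ along $\KD(\bar Z)\to\UPic(\bar Z)$, i.e.\ the class of the extension of $\widehat M$ by $\bar k^\times[1]$ obtained by pulling back the triangle along $\lambda$. The semi-invariant functions on $\bar Y$ exhibit exactly this pullback. The class of the resulting Yoneda extension is computed by choosing Galois lifts through $E$, so the cocycle measuring failure of Galois-equivariance is the factor set of $E$. Hence $\ob(\Phi(\bar Y,E))=\pm[E]$. The sign has to be tracked against the conventions of Example~\ref{exam:TorsorUnderTorus}, where torsor classes appear with a minus sign; if necessary, $\Phi$ is adjusted by $-1$.

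The main obstacle is the cocycle-level comparison in the last step. One has to check that the Yoneda/derived-category obstruction class agrees with the group-extension class of $E$ under the standard identification $\Ext^2_k(\widehat M,\bar k^\times)\cong\H^2(k,M)$. I would first reduce to the case $Z=\Spec k$, i.e.\ $\UPic=0$ with $\bar k^\times[1]$ shifted, as a sanity check. I would then verify the general case by choosing a set-theoretic section $\Gamma_k\to E$ and comparing the two 2-cocycles directly, as in \cite[Proposition~8.1]{HS2013Descent}.
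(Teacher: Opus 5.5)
Your construction of $\Phi$ is sensible, but the proposal leaves unproved the only nontrivial part of the statement, the identity $\ob(\Phi(\bar Y,E))=[E]$. The paper gives no argument of its own here: it quotes \cite[Theorem~4.1]{ManhLinh2025DescentTypes}, so you are attempting a self-contained route. For the construction, let $\mathcal S$ be the group of invertible $\bar M$-semi-invariant rational functions on $\bar Y$ (in the total ring of fractions when $\bar M$ is disconnected). Then $[\mathcal S/\bar k^\times\to\Div(\bar Z)]$ is a complex of $\Gamma_k$-modules containing $\UPic(\bar Z)$ with quotient $\widehat M[1]$, and its connecting morphism is a natural candidate for $\Phi(\bar Y,E)$. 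You still need the actual definition of descent types to supply a \emph{continuous} semilinear $E$-action of the kind you postulate, so that $\mathcal S/\bar k^\times$ is a discrete $\Gamma_k$-module. For the commutativity itself you give only a heuristic and a plan.

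The sentence ``the semi-invariant functions on $\bar Y$ exhibit exactly this pullback'' is not true as stated. $\mathcal S$ is only an $E$-module: $m\in M(\bar k)\subset E$ acts by $f\mapsto\chi_f(m)^{-1}f$. If $\mathcal S$ carried a compatible $\Gamma_k$-action, then $[\mathcal S\to\Div(\bar Z)]$ would lift $\lambda$ to $\KD(\bar Z)$ and force $\ob(\lambda)=0$. What the $E$-equivariant complex gives you for free is only that $\ob(\Phi(\bar Y,E))$ dies after inflation to $E$. By Hochschild--Serre (once made sense of for the non-profinite group $E$), this means it equals $\phi_*[E]$ for \emph{some} $\phi\in\operatorname{End}_{\Gamma_k}(M(\bar k))$. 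Showing $\phi=\pm\mathrm{id}$ is the missing step.

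You need an actual lemma here. Take a continuous section $s\colon\Gamma_k\to E$ with $s(\sigma)s(\tau)=a(\sigma,\tau)s(\sigma\tau)$. The operators $\sigma*f={}^{s(\sigma)}f$ on $\mathcal S$ then satisfy $\sigma*(\tau*f)=({}^{\sigma\tau}\chi_f)(a(\sigma,\tau))^{-1}\,(\sigma\tau*f)$. One must prove that the Yoneda composite $\widehat M\to\UPic(\bar Z)\to\bar k^\times[2]$ corresponds to $\pm[a]$ under $\Ext^2_k(\widehat M,\bar k^\times)\cong\H^2(k,M)$. One way is to build from $\mathcal S$ and $s$ an explicit $\Gamma_k$-complex representing the pullback of $\bar k^\times[1]\to\KD(\bar Z)\to\UPic(\bar Z)$ along $\lambda$, and compute its class.

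Your test case $Z=\Spec k$ does not probe this. There $\Hom_{\Dcal^+(k)}(\widehat M,\UPic(\bar Z))=0$, so it only checks that $[E]=0$. The sign ambiguity, by contrast, is harmless, since replacing $\Phi$ by $-\Phi$ keeps it functorial.
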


\noindent The construction of the Springer class of $X$ and its pushforward $\eta_X^{\mathrm{torf}}$ gives rise to descent types as follows. The morphism $\bar{f_0} \colon \bar{G} \to \bar{X}$ given by $g \mapsto g \cdot x$ is a torsor under $\bar{H}$ and the pair $(\bar{f_0}, \eta_X)$ defines an element of $\operatorname{DType}(X,L_X)$. We make a slight abuse of notation here since the descent type is $[\bar{f_0}, E_X]$ for $E_X$ as above. Moreover, we can push $\bar{f_0}$ forward  to obtain the descent type in $\operatorname{DType}(X, L^{\mathrm{torf}})$ given by the torsor $\bar{f} \colon \bar{G} \times^{\bar{H}^{\mathrm{ssu}}} \bar{H}^{\mathrm{torf}} \to X$ and the class $\eta_X^{\mathrm{torf}}$.


\subsection{Homogeneous spaces with connected stabilisers}

Thanks to the tools of abelianization of nonabelian Galois cohomology, developed in \cite{Borovoi1993Abelianization}, and the dévissage techniques from \cite{BCTS08ElementaryObstruction}, we can easily deduce from the results of Section~\ref{sec:TorsorOfGivenXType} that the $(\qt, \Be)$-obstruction is the only obstruction to the existence of rational points on homogeneous spaces with connected stabilisers. Let us recall some basic facts about the elementary obstruction.\par

Let $X$ be a geometrically integral $K$-variety such that $\bar{k}[X]^{\times} = \bar{k}^{\times}$. The {\em elementary obstruction} $e(X) \in \operatorname{Ext}_K^2(\Pic(\bar{X}),\bar{k})$ is the class of the extension
\[
    1 \longrightarrow \bar{k}^{\times} \longrightarrow \bar{k}(X)^{\times} \longrightarrow \Div (\bar{X}) \longrightarrow \Pic (\bar{X}) \longrightarrow 0.
\]
The class $e(X)$ is an obstruction to the existence of rational points and it was first introduced and studied in \cite{CTS1987Descente}. When $\Pic(\bar{X})$ is finitely generated, the vanishing of $e(X)$ is equivalent to the existence of a universal torsor, see \cite[Section~2.3]{Sko2001TorsorsAndRationalPoints}. Then, we might combine Corollary~\ref{coro:QtBeNonempty} with \cite[Theorem~3.8]{BCTS08ElementaryObstruction} to obtain the following result:
\begin{prop}
    Let $X$ be a homogeneous space of a connected linear algebraic group $G$ with connected geometric stabiliser. Then $X(\Abf_K)^{\qt, \Be} \neq \varnothing$ implies that $X(K) \neq \varnothing$.
\end{prop}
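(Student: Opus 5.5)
The plan is to reduce to the case where $\bar{K}[X]^\times = \bar{K}^\times$ and $\Pic(\bar{X})$ is finitely generated, so that the elementary obstruction makes sense. Corollary~\ref{coro:QtBeNonempty} then gives a universal torsor. Finally, \cite[Theorem~3.8]{BCTS08ElementaryObstruction} converts the existence of a universal torsor into a rational point.

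First, we use the standard reductions for homogeneous spaces. Replacing $G$ by a suitable extension (a flasque or coflasque resolution, as in \cite{BCTS08ElementaryObstruction}), we may assume $G^{\mathrm{ss}}$ is simply connected. We may also assume $G^{\mathrm{tor}}$ is quasitrivial; pulling $X$ back along the resulting cover of $G$ does not change $X$. This gives $\bar{K}[X]^\times/\bar{K}^\times$ and $\Pic(\bar{X})$ computable from characters. Specifically, $\Pic(\bar{X})$ is finitely generated, being a subquotient of $\widehat{\bar{H}}\oplus\widehat{G}$ via Sansuc's exact sequence for $\bar{G}\to\bar{X}$ with $\Pic(\bar{G})=0$. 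If $\bar{K}[X]^\times\neq\bar{K}^\times$, we use the units morphism $u\colon X\to E_X$ of Lemma~\ref{lemm:UnitTorsor}. By functoriality, $E_X(\Abf_K)^{\qt,\Be}\neq\varnothing$, so $E_X(\Abf_K)^{\Be}\neq\varnothing$ and hence $E_X(K)\neq\varnothing$ by \cite[Corollaire~8.3]{CTH2015DualCt} and \cite[Théorème~5.2]{Izquierdo2019Dualite}. We then replace $X$ by the fibre of $u$ over a rational point. This fibre is a homogeneous space of the kernel of $G\to T$, which is still connected with connected stabilisers, and it has only constant invertible functions. The delicate point is to check that the fibre still carries a point of $(\qt,\Be)$ type. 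We handle it as in the proof of Proposition~\ref{prop:UnitTorsorPreserveTor}: take a quasitrivial torsor over the fibre, extend it to $X$ via the cartesian square of \cite[Theorem~8.10]{HS2013Descent}, and adjust the adelic point by an element of $T(K)$ acting on $E_X$.

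Next, since $X(\Abf_K)^{\qt,\Be}\neq\varnothing$, Corollary~\ref{coro:QtBeNonempty} shows that $\XType$ is surjective for $M$ the group of multiplicative type with $\widehat{M}=\Pic(\bar{X})$. Because $\bar{K}[X]^\times=\bar{K}^\times$, we have $\UPic(\bar{X})\simeq\Pic(\bar{X})[0]$. The identity $\widehat{M}\to\Pic(\bar{X})$ is therefore realized by a torsor, which is a universal torsor, so $e(X)=0$ by \cite[Section~2.3]{Sko2001TorsorsAndRationalPoints}.

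Finally, \cite[Theorem~3.8]{BCTS08ElementaryObstruction} states, for homogeneous spaces of connected linear groups with connected stabilisers over fields of cohomological dimension at most $2$ where period equals index, that $e(X)=0$ implies $X(K)\neq\varnothing$. We expect this to rest on Proposition~\ref{prop abelianisation springer class}: $e(X)=0$ forces $\eta_X^{\mathrm{tor}}=0$, which gives neutrality of $\eta_X$, and then Serre's Conjecture II type results yield a rational point. The fields considered here satisfy these hypotheses by \cite{CTGP2004ArithmeticLinear} and \cite{HHK2029ApplicationsPatching}. The main obstacle is the first step: making the reduction to trivial units compatible with the $(\qt,\Be)$ condition, and checking that the hypotheses of \cite[Theorem~3.8]{BCTS08ElementaryObstruction} hold for geometric global fields. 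If the reduction proves awkward, we would instead apply Proposition~\ref{prop descent types vs extened types} directly. The descent type $(\bar{f},\eta_X^{\mathrm{tor}})$ maps to an extended type whose obstruction class is $\eta_X^{\mathrm{tor}}$, and surjectivity of $\XType$ forces that class to vanish. Proposition~\ref{prop abelianisation springer class} then gives that $X$ is dominated by a principal homogeneous space $P$ of $G$, and one concludes via Sansuc-type results for $P$ over $K$, namely \cite{CTH2015DualCt} and \cite{Izquierdo2019Dualite}.
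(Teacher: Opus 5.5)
Your main line (Corollary~\ref{coro:QtBeNonempty} gives surjectivity of $\XType$, hence a universal torsor and $e(X)=0$, then \cite[Theorem~3.8]{BCTS08ElementaryObstruction}) is exactly the paper's proof, which consists only of this combination. The paper makes no preliminary reduction to $\bar{K}[X]^\times=\bar{K}^\times$, and Corollary~\ref{coro:QtBeNonempty} needs no hypothesis on units. The gap is in the reduction you add, at the point you yourself flag as delicate: the mechanism you propose does not work, for two reasons.

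\emph{Landing in a rational fibre.} The point $u(x_\Abf)$ is an adelic, not a rational, point of $E_X$, and $T(K)$ does not act on $X$. To get an adelic point on a fibre $X_1=u^{-1}(e)$ with $e\in E_X(K)$, you must move $x_\Abf$ by $G(\Abf_K)$. After identifying $E_X\cong T$, this means finding a point of $T(K)$ in $u(x_\Abf)\cdot\Img\bigl(G(\Abf_K)\to T(\Abf_K)\bigr)$. The data on $E_X$ cannot help here. Any torsor $W\to T$ under a quasitrivial torus is itself a $K$-torus, by \cite[Theorem~A.1]{ILA2021LocalGlobal} and $\H^1(K,Q)=0$. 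So $\Be(W)$ pairs to zero with all of $W(\Abf_K)$, and adelic points of $T$ lift to $W$. Hence $T(\Abf_K)^{\qt,\Be}=T(\Abf_K)$, and the choice of fibre would have to be extracted from $X$ itself.

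\emph{Extending torsors from the fibre.} \cite[Theorem~8.10]{HS2013Descent} runs in the opposite direction from what you need. It starts from a torsor over $X$ of trivial classical type and exhibits it as a pullback from the units torsor. It does not extend a quasitrivial torsor from $X_1$ to $X$, and such extensions need not exist. Lifting an extended type $\widehat{Q}\to\UPic(\bar{X}_1)$ through the restriction $\UPic(\bar{X})\to\UPic(\bar{X}_1)$ is obstructed by the connecting map of the triangle $\widehat{T}[1]\to\UPic(\bar{X})\to\UPic(\bar{X}_1)\to\widehat{T}[2]$. This triangle is distinguished here because $\Pic(\bar X)\cong\Pic(\bar X_1)$. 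The obstruction is a class in $\Ext^2_K(\widehat{Q},\widehat{T})$ that nothing in your argument kills. So $X_1(\Abf_K)^{\qt,\Be}\neq\varnothing$ is not established, and neither is the universal torsor on $X_1$ that your next step needs.

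Your fallback is also incomplete at its last step. Neutrality of $\eta_X$ only provides \emph{some} $K$-torsor $P$ under $G$ dominating $X$, with no control over which one. The results of \cite{CTH2015DualCt} and \cite{Izquierdo2019Dualite} give $P(K)\neq\varnothing$ only once you exhibit adelic points of $P$ orthogonal to $\Be(P)$. Producing such a $P$ is the real content of the statement, and your fallback uses the hypothesis $X(\Abf_K)^{\qt,\Be}\neq\varnothing$ only through Corollary~\ref{coro:QtBeNonempty}.

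The paper has an argument that handles this and sidesteps units entirely: the Borovoi-condition route in the last subsection of Section~\ref{section Homogeneous spaces}.
\begin{enumerate}
\item After a flasque resolution of $G$, connected stabilisers satisfy the Borovoi condition.
\item Corollary~\ref{coro:QtBeNonempty} and Proposition~\ref{prop descent types vs extened types} kill the pushforward of $\eta_X^{\mathrm{tor}}$ in $\H^2(K,Q)$ for a quasitrivial $Q\supseteq H^{\mathrm{tor}}$.
\item This yields a $Q$-torsor $Y\to X$ which is a homogeneous space of $G\times Q$ with $\bar{H}^{\mathrm{torf}}\to(\bar{G}\times\bar{Q})^{\tor}$ injective (Proposition~\ref{prop existence of universal torsor that is a homogeneous space}).
\item The hypothesis gives $Y(\Abf_K)^{\Be(Y)}\neq\varnothing$ directly, and the first part of Proposition~\ref{prop ILA} then gives $Y(K)\neq\varnothing$, hence $X(K)\neq\varnothing$.
\end{enumerate}
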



\subsection{Homogeneous spaces satisfying the Borovoi condition}

In order to treat the case of a general homogeneous space of a linear group that satisfies the Borovoi condition, we need the following variant of \cite[Proposition~5.1]{ILA2021LocalGlobal}

\begin{prop}\label{prop existence of universal torsor that is a homogeneous space}
    Let $X$ be a homogeneous space of a connected linear algebraic  $K$-group $G$ withtorsor geometric stabilisers $\bar{H}$ that satisfy the Borovoi condition. Assume that $X(\Abf_K)^{\qt, \Be} \neq \varnothing$. Fix an embedding $H^{\mathrm{torf}} \hookrightarrow Q$ where $Q$ is a quasitrivial $K$-torus. Then there exists an $X$-torsor $Y \to X$ under $Q$ such that $Y$ is a homogeneous space of $G \times Q$ with geometric stabilisers $\bar{H}^{\mathrm{torf}}$. In particular, the natural map $\bar{H}^{\mathrm{torf}} \to (\bar{G}\times \bar{Q})^{\tor}$ is injective.
\end{prop}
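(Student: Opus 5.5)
The plan follows the proof of \cite[Proposition~5.1]{ILA2021LocalGlobal}, replacing the local hypothesis at the places of $\Omega_{X,0}$ by Corollary~\ref{coro:QtBeNonempty}.

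\emph{Step 1: the descent type.} Fix $x \in X(\bar{K})$ with stabiliser $\bar{H}$. As explained in the previous subsection, the torsor
\[
\bar{f}\colon \bar{G}\times^{\bar{H}^{\mathrm{ssu}}}\bar{H}^{\mathrm{torf}} \to \bar{X}
\]
together with the class $\eta_X^{\mathrm{torf}}$ gives a descent type $\theta \in \operatorname{DType}(X,H^{\mathrm{torf}})$. Here $H^{\mathrm{torf}}$ is the $K$-form determined by $\eta_X^{\mathrm{torf}}$; it is of multiplicative type by the Borovoi condition. Proposition~\ref{prop descent types vs extened types} sends $\theta$ to an extended type $\lambda=\Phi(\theta)\colon \widehat{H^{\mathrm{torf}}}\to \UPic(\bar{X})$ with $\ob(\lambda)=\eta_X^{\mathrm{torf}}$. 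Composing with the character map of the fixed embedding $\iota\colon H^{\mathrm{torf}}\hookrightarrow Q$ gives
\[
\lambda\circ\widehat{\iota}\colon \widehat{Q}\to\UPic(\bar{X}).
\]
Since $X(\Abf_K)^{\qt,\Be}\neq\varnothing$, Corollary~\ref{coro:QtBeNonempty} yields a torsor $p\colon Y\to X$ under $Q$ of extended type $\lambda\circ\widehat{\iota}$. Alternatively, one could apply the corollary to $\lambda$ itself to get a torsor $Z\to X$ under $H^{\mathrm{torf}}$ and set $Y\coloneq Z\times^{H^{\mathrm{torf}}}Q$. I prefer this second route, since $Z$ realizes $\theta$ itself and not just its pushforward.

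\emph{Step 2: geometry over $\bar{K}$.} Because extended types classify torsors up to Galois twist, $\bar{Z}\cong\bar{G}\times^{\bar{H}^{\mathrm{ssu}}}\bar{H}^{\mathrm{torf}}$ as $\bar{X}$-torsors. The right action of $\bar{H}^{\mathrm{torf}}$ commutes with the left $\bar{G}$-action, so $\bar{Z}$ becomes a homogeneous space of $\bar{G}\times\bar{H}^{\mathrm{torf}}$. Equivalently, $\bar{Z}\cong\bar{G}/\bar{H}^{\mathrm{ssu}}$, with stabiliser $\bar{H}^{\mathrm{ssu}}$ under $\bar{G}$ alone. Then
\[
\bar{Y}=\bar{G}\times^{\bar{H}^{\mathrm{ssu}}}\bar{Q}=(\bar{G}\times\bar{Q})/\bar{H}
\]
is a homogeneous space of $\bar{G}\times\bar{Q}$. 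Here $\bar{H}$ embeds in $\bar{G}\times\bar{Q}$ via $h\mapsto(h,\iota(\bar h)^{-1})$, where $\bar h$ is the image of $h$ in $\bar{H}^{\mathrm{torf}}$.

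\emph{Step 3: descent to $K$.} The point to verify is that the $G\times Q$-action on $\bar{Y}$ is defined over $K$, and that the stabilisers are as claimed. The $G$-action on $Y$ exists over $K$ because $Z\to X$ is a descent of the $\bar{G}$-equivariant torsor $\bar{f}$ compatible with $\eta_X^{\mathrm{torf}}$. This is where the descent-type formalism of \cite{ManhLinh2025DescentTypes} is used: the extension $E$ in $\theta$ is the one induced by $E_X\subseteq G(\bar K)\rtimes\Gamma_K$, which forces the Galois action on $\bar{Z}$ to commute with $G(\bar{K})$. I expect this compatibility to be the main obstacle, because a priori Corollary~\ref{coro:QtBeNonempty} only produces some twist of the torsor with the right extended type. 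I would handle it in two stages. First, any two realizations differ by a class in $\H^1(K,H^{\mathrm{torf}})$. Second, twisting by such a class does not affect $G$-equivariance, since $H^{\mathrm{torf}}$ acts through the centralizing side. The fact that $Q$ acts on $Y$ and commutes with $G$ is then clear.

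\emph{Step 4: stabilisers and injectivity.} It remains to identify the stabilisers and prove the last assertion. As computed above, the stabiliser in $\bar{G}\times\bar{Q}$ is a copy of $\bar{H}$. To reach the stated stabilisers $\bar{H}^{\mathrm{torf}}$, the plan is to apply the statement after replacing $G$ by $G\times Q$ and quotienting by the normal subgroup $\bar{H}^{\mathrm{ssu}}$, whose contribution is absorbed by $G^{\mathrm{ssu}}$. For injectivity, consider the composite
\[
\bar{H}\to\bar{G}\times\bar{Q}\to(\bar{G}\times\bar{Q})^{\tor}=\bar{G}^{\tor}\times\bar{Q}.
\]
Its second component is $h\mapsto\iota(\bar h)^{-1}$. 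This is injective on $\bar{H}^{\mathrm{torf}}$ because $\iota$ is an embedding, so the induced map $\bar{H}^{\mathrm{torf}}\to(\bar{G}\times\bar{Q})^{\tor}$ is injective.
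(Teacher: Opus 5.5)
There is a genuine gap in Step~3, which is the heart of the statement. Corollary~\ref{coro:QtBeNonempty} gives you \emph{some} $X$-torsor $Z$ under $H^{\mathrm{torf}}$ with $\XType(Z)=\lambda$, but it says nothing about a $G$-action on $Z$.

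Your two-stage fix is circular. Realizations of $\lambda$ do differ by classes in $\H^1(K,H^{\mathrm{torf}})$, and such twists do preserve $G$-equivariance. But this only shows that \emph{if} one $G$-equivariant $K$-form of $\bar f$ exists, then every realization is one. The existence of that first $G$-equivariant form is exactly what must be proved.

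Likewise, the claim that ``$Z$ realizes $\theta$ itself'' needs two things: $\Phi$ must be injective, and it must send the descent type of a torsor to its extended type. Neither is part of Proposition~\ref{prop descent types vs extened types} as quoted. Moreover, the natural proof of injectivity, on descent types with vanishing $\H^2$-class, itself goes through building a torsor out of a splitting of $E$, which is the missing step.

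What you should take from the Corollary is only a cohomological vanishing. By Proposition~\ref{prop descent types vs extened types}, $\eta_X^{\mathrm{torf}}=\ob(\lambda)=0$. The paper instead uses $\partial=0$ on $\Hom_{\Dcal^+(K)}(\widehat{Q},\UPic(\bar X))$ to get that the pushforward $\xi\in\H^2(K,Q)$ of $\eta_X^{\mathrm{torf}}$ vanishes; your choice of $H^{\mathrm{torf}}$ is harmless and even gives more.

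The $G$-equivariant object must then be built from the Springer extension itself. Take a continuous splitting of $E_X/\bar H^{\mathrm{ssu}}(\bar K)\to\Gamma_K$, or of its pushout to $\bar Q(\bar K)$. It acts on $\bar G/\bar H^{\mathrm{ssu}}$ (resp.\ on $\bar G\times^{\bar H}\bar Q$) by semilinear automorphisms $\phi_\sigma$ satisfying $\phi_\sigma(a\cdot z)={}^{\sigma}a\cdot\phi_\sigma(z)$ for $a\in G(\bar K)$. Hence it defines a $K$-form carrying a $G$-action. The paper packages this as a morphism of gerbes $\mathcal{M}_X^{\mathrm{torf}}\to\underline{\mathrm{TORS}}(Q)$ and invokes \cite[Th\'eor\`eme~3.4]{DLA2019HasseHomoReductionFinite}, which outputs $Y$ directly as a homogeneous space of $G\times Q$.

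With that in hand, your stabiliser computation (the graph $\{(h,\iota(\bar h)^{-1})\}\cong\bar H$) and your injectivity argument are correct. Drop the manoeuvre at the start of Step~4, though: $\bar H^{\mathrm{ssu}}$ is not normal in $\bar G\times\bar Q$. Your own Step~2 shows that any $G$-equivariant $Q$-torsor over $X$ has stabiliser equal to the graph of a homomorphism $\bar H\to\bar Q$. So the ``$\bar H^{\mathrm{torf}}$'' in the statement can only mean a copy of $\bar H$ (with torf-quotient $\bar H^{\mathrm{torf}}$). That is all the ``in particular'' clause and Proposition~\ref{prop ILA}~\ref{prop ILA :1} require.
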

\begin{proof}
    We can apply Corollary~\ref{coro:QtBeNonempty} to deduce that the ``connecting homomorphism''
    \[
        \partial \colon \Hom_{\Dcal^+(K)}\bigl(\widehat{Q},\UPic(\bar{X})\bigr) \longrightarrow \H^2(K,Q)
    \]
    is zero. On the other hand, we can consider the pushforward $[g, \xi] \in \operatorname{DType}(X, Q)$ of the descent type $[\bar{f}, \eta_X^{\mathrm{torf}}]$ described above. Since $\partial = 0$, Proposition~\ref{prop descent types vs extened types} ensures that $\xi = 0$ in $\H^2(K,Q)$. \par
    Let us translate the above conclusion to the language of gerbes. Triviality of $\xi$ implies that $\xi$ represents the trivial gerbe $\underline{\mathrm{TORS}}(Q)$. Additionally, the fact that $\xi$ is the pushforward of $\eta_X^{\mathrm{torf}}$ can be interpreted as a morphism $\mathcal{M}_X^{\mathrm{torf}} \to \underline{\mathrm{TORS}}(Q)$ of gerbes. Then, as in \cite[Proposition~5.1]{ILA2021LocalGlobal}, we conclude that this datum gives us the desired $X$-torsor under $Q$ by applying \cite[Théorème~3.4]{DLA2019HasseHomoReductionFinite}.
\end{proof}

\noindent We can now establish the following result.
\begin{thm}
    Let $X$ be a homogeneous space of $G$ satisfying the Borovoi condition. Then,
    \[
        X(\Abf_K)^{\qt, \Be} \neq \varnothing \Longrightarrow X(K) \neq \varnothing.
    \]
\end{thm}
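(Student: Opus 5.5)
The plan is to reduce to Proposition~\ref{prop ILA}~\ref{prop ILA :1} by passing to the torsor from Proposition~\ref{prop existence of universal torsor that is a homogeneous space}. Let $x_\Abf \in X(\Abf_K)^{\qt,\Be}$. I fix an embedding $H^{\mathrm{torf}} \hookrightarrow Q$ into a quasitrivial $K$-torus; one exists, for instance by \cite[Lemma~3.7]{BCTS08ElementaryObstruction} or simply by taking a permutation module surjecting onto $\widehat{H^{\mathrm{torf}}}$. Proposition~\ref{prop existence of universal torsor that is a homogeneous space} then gives an $X$-torsor $p\colon Y \to X$ under $Q$ with the following properties: $Y$ is a homogeneous space of $G' \coloneq G \times Q$, its geometric stabiliser is $\bar{H}' = \bar{H}^{\mathrm{torf}}$, and the map $\bar{H}' \to (\bar{G}')^{\tor}$ is injective.

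Next I check that $Y$ satisfies the Borovoi condition. We have $(G')^{\mathrm{ss}} = G^{\mathrm{ss}}$, which is simply connected. Moreover $\bar{H}'$ is commutative of multiplicative type, so $(\bar{H}')^{\mathrm{torf}} = \bar{H}'$ is commutative. Proposition~\ref{prop ILA}~\ref{prop ILA :1} therefore applies to $Y$: the Brauer--Manin obstruction with respect to $\Be(Y)$ is the only obstruction to the local--global principle for $Y$.

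It remains to produce a point of $Y(\Abf_K)^{\Be(Y)}$. By the definition of $X(\Abf_K)^{\qt,\Be}$ as an intersection over torsors under quasitrivial tori, $x_\Abf \in p\bigl(Y(\Abf_K)^{\Be}\bigr)$, where $\Be$ means $\Be(Y)$. So there is a lift $y_\Abf \in Y(\Abf_K)^{\Be(Y)}$. Hence $Y(K) \neq \varnothing$, and its image under $p$ is a rational point of $X$.

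The main obstacle is hidden in Proposition~\ref{prop existence of universal torsor that is a homogeneous space}, namely the vanishing of the pushed-forward Springer class in $\H^2(K,Q)$. Since that proposition is already available, what remains is to make sure its hypotheses are met and that the torsor it provides is exactly one of those indexing the intersection in the definition of $X(\Abf_K)^{\qt,\Be}$. The second point holds because $Y \to X$ is a torsor under the quasitrivial torus $Q$.

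A subtle point is that the definition of the Borovoi condition speaks of connected stabilisers, whereas the theorem allows $\bar{H}^{\mathrm{torf}}$ to be merely commutative. The same argument applies verbatim in that generality, since the stabiliser $\bar{H}^{\mathrm{torf}}$ of $Y$ is of multiplicative type. If one wants to be careful, one should confirm that Proposition~\ref{prop ILA}~\ref{prop ILA :1} is stated for possibly nonconnected abelian stabilisers, which is the setting of \cite{ILA2021LocalGlobal}.
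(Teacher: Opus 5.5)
Your proposal is correct and follows essentially the same route as the paper. The paper also passes to the $Q$-torsor $Y$ from Proposition~\ref{prop existence of universal torsor that is a homogeneous space}, uses the definition of $X(\Abf_K)^{\qt,\Be}$ to get $Y(\Abf_K)^{\Be(Y)} \neq \varnothing$, and then applies Proposition~\ref{prop ILA}~\ref{prop ILA :1} to $Y$; your explicit check that $Y$ satisfies the Borovoi condition is a detail the paper leaves implicit.
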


\begin{proof}
    Fix an embedding $H^{\mathrm{torf}} \hookrightarrow Q$, where $Q$ is a quasitrivial torus. Let $f \colon Y \to X$ be the torsor under $Q$ from Proposition~\ref{prop existence of universal torsor that is a homogeneous space}. By assumption, $Y(\Abf_K)^{\Be(Y)}$ is nonempty and $Y$ satisfies the hypothesis of Proposition~\ref{prop ILA}~\ref{prop ILA :1}. We deduce that $Y(K) \neq \varnothing$ and thus $X(K)$ is nonempty.
\end{proof}

\medskip\par
{\em Acknowledgments.} We are grateful to Diego Izquierdo for bringing up this question. 
This work was initiated during the second author's postdoctoral appointment at the Institut de Math\'ematiques de Jussieu - Paris Rive Gauche, CNRS.

\bibliographystyle{alpha}
\bibliography{ref}

\end{document}